\numberwithin{equation}{section}
\newtheorem{theorem}{Theorem}[section]
\newtheorem{lemma}[theorem]{Lemma}
\newtheorem{corollary}[theorem]{Corollary}
\newtheorem{prop}{Proposition}[section]
\newcommand{\dif}{\mathrm{d}}
\newcommand{\ddiv}{\mathrm{div}}
\begin{document}
\title[Stability for three-dimensional relaxed CNS]{Asymptotic stability of planar viscous shock wave to three-dimensional relaxed compressible Navier-Stokes equations}
\author{Renyong Guan and Yuxi Hu}
 \thanks{\noindent  Renyong Guan,   Department of Mathematics, China University of Mining and Technology, Beijing, 100083, P.R. China, renyguan@163.com\\
\indent  Yuxi Hu, Department of Mathematics, China University of Mining and Technology, Beijing, 100083, P.R. China, yxhu86@163.com\\
 }
\begin{abstract}
This paper establishes the nonlinear time-asymptotic stability of shifted planar viscous shock waves for the three-dimensional relaxed compressible Navier-Stokes equations, in which a modified Maxwell-type model replaces the classical Newtonian constitutive relation. Under the assumptions of sufficiently small shock strength and initial perturbations, we prove that planar viscous shock waves are nonlinearly stable. The main steps of our analysis are as follows. First, using the relative entropy method together with the framework of $a$-contraction with shifts, we derive energy estimates for the weighted relative entropy of perturbations. We then successively obtain high-order and dissipation estimates via direct energy arguments, which provide the required a priori bounds. Combining these estimates with a local existence result, we establish the global asymptotic stability of the shifted planar viscous shock wave. Finally, we show that as the relaxation parameter tends to zero, solutions of the relaxed system converge globally in time to those of the classical Navier-Stokes system.
 \\[2em]
{\bf Keywords}: Relaxed compressible Navier-Stokes equations;  planar viscous shock waves; asymptotic stability; relaxation limit \\

\end{abstract}
\maketitle
\section{Introduction}
The three-dimensional (3-D) isentropic compressible Navier-Stokes equations are governed by the fundamental principles of mass conservation and momentum balance as follows
\begin{equation}\label{1.1}
\begin{cases}
\partial_t\rho+\ddiv_x(\rho u)=0,\\
\partial_t (\rho u)+\ddiv_x(\rho u\otimes u)+\nabla_x p(\rho)=\ddiv_x \Pi,
\end{cases}
\end{equation}
where $(t, x)\in (0, +\infty)\times \Omega$ and  $\Omega$ is  an open subset of $\mathbb R^3$. Here, $\rho>0$, $u=(u_1, u_2, u_3)^T$, $p(\rho)$, $\Pi$ represent fluid density, velocity, pressure and stress tensor, respectively. In particular, the pressure $p(\rho)$ is assumed to satisfy the usual $\gamma$-law, $p(\rho)=A \rho^\gamma$ where $\gamma>1$ is the adiabatic index and $A$ is any positive constant. Without loss of generality, we set $A=1$ hereafter.

To close the system \eqref{1.1}, the stress $\Pi$ must be specified. For simple fluids, $\Pi$ is governed by the Newton constitutive relation
\begin{equation}\label{1.2}
\Pi=\mu\left(\nabla_x u+(\nabla_x u)^T-\frac{2}{3}\ddiv_x u \mathrm{I}_3\right)+\lambda\ddiv_x u\mathrm{I}_3,
\end{equation}
where the constants $\mu$ and $\lambda$ are shear and bulk viscosity coefficients, respectively. $\mathrm{I}_3$ denotes the $3\times 3$ identity matrix. The system $\eqref{1.1}$-$\eqref{1.2}$ is called classical compressible Navier-Stokes equations. For complex fluids, the Newtonian constitutive equation \eqref{1.2} is not valid.  Specially, for viscoelastic fluid, Maxwell \cite{MAX} introduced the following constitutive relation
\begin{equation}\nonumber
\tau\dot\Pi+\Pi=2\mu Du,
\end{equation}
 where $\dot\Pi=\Pi_t+u\cdot\nabla\Pi$ denotes the material derivative and $D u=\frac{1}{2}\left(\nabla_x u+(\nabla_x u)^T\right)$ the symmetric part of $\nabla u$. The relaxation parameter $\tau$ accounts for time lag in the stress tensor's response to the velocity gradient. Notably, even simple fluids, water for example, also exhibit a finite {\it time lag} (1 ps to 1 ns), see \cite{GM, FS}. Moreover, it was demonstrated that even for a simple fluid, this time lag cannot be neglected, as demonstrated in experiments measuring high-frequency (20 GHz) vibrations of nano-scale mechanical devices immersed in water-glycerol mixtures by Pelton et al. \cite{MP}.

On the other hand, from a mathematical perspective, Yong \cite{YWA} (see also Freist\"uhler \cite{FRE1, FRE2} for a Galilei-invariant model and M\"uller and Ruggeri\cite{muller1998}  within the framework of {\it{Rational Extended  Thermodynamics}}) rewrites the stress tensor $\Pi$ as $\Pi_1 + \Pi_2 \mathrm{I}_3$ and separately relaxes the components $\Pi_1$ and $\Pi_2$. Surprisingly, a physical explanation for this model has been provided by Chakraborty and Sarder \cite{DJE} through experiments on nano-scale mechanical devices vibrating in simple fluids. Their results demonstrate that this model best represents linear viscoelastic flow.

Motivated by the above works, we consider in this paper the following system:
\begin{equation}\label{x1.6}
\begin{cases}
\rho_t+\ddiv_x(\rho u)=0,\\
 \rho(\partial_t u+ u\cdot\nabla_x u) +\nabla_x p(\rho)=\ddiv_x \Pi_1+\nabla_x\Pi_2,\\
\tau \rho \left(\partial_t\Pi_1+ u\cdot\nabla_x \Pi_1\right)+\Pi_1=
   \mu\left(\nabla_x u+(\nabla_x u)^T-\frac{2}{3}\ddiv_x u \mathrm{I}_3\right),\\
\tau \rho \left(\partial_t\Pi_2+ u\cdot\nabla_x \Pi_2\right)+\Pi_2=\lambda\ddiv_x u,
\end{cases}
\end{equation}
for $(t, x)\in \mathbb R^+\times \Omega$ with $\Omega=\mathbb R\times \mathbb T^2$ and $\mathbb T^2=(\mathbb R/\mathbb Z)^2$. The initial data are given by
\begin{align} \label{1.7}
(\rho, u, \Pi_1, \Pi_2)(0,x)=(\rho_0, u_0, \Pi_{10}, \Pi_{20})(x)
\rightarrow(\rho_{\pm}, u_{\pm}, \mathrm O_{3\times3}, 0) \quad (x_1\rightarrow\pm\infty),
\end{align}
where $\Pi_{10}$ is symmetric and traceless, $\rho_\pm>0$, $u_\pm=(u_{1\pm}, 0, 0)$ and $\mathrm O_{3\times3}$ is the zero matrix of order three. 

As noted in \cite{MNS, GD, WY, WWS, GHSR, GHSS}, the large-time behavior of solutions to system \eqref{1.1}-\eqref{1.2} or \eqref{x1.6} with initial data \eqref{1.7} is closely
related to the planar Riemann problem of the associated 3-D Euler system
\begin{equation}\label{1.8}
\begin{cases}
\partial_t\rho+\ddiv_x(\rho u)=0,\\
\partial_t(\rho u)+\ddiv_x(\rho u\otimes u)+\nabla_x p(\rho)=0,
\end{cases}
\end{equation}
with the Riemann initial data
\begin{equation}\label{1.9}
(\rho,u)(t=0,x)=
\begin{cases}
(\rho_-,u_-),\quad x_1<0,\\
(\rho_+,u_+),\quad x_1>0.
\end{cases}
\end{equation}
Note that the planar shock wave solutions of \eqref{1.8}-\eqref{1.9} satisfy the following one-dimensional (1-D)  Riemann problem of Euler system
\begin{equation}\label{1.10}
\begin{cases}
\partial_t\rho+\partial_{x_1}(\rho u_1)=0,\\
\partial_t(\rho u_1)+\partial_{x_1}(\rho u_1^2+ p(\rho))=0,\\
(\rho,u_1)(t=0,x_1)=
\begin{cases}
(\rho_-,u_{1-}),\quad x_1<0,\\
(\rho_+,u_{1+}),\quad x_1>0.
\end{cases}
\end{cases}
\end{equation}
By considering 2-shock wave (the 1-shock wave case is analogous), the states $(\rho_\pm, u_{1\pm})$ satisfy the Rankine-Hugonint (R-H) condition
\begin{equation}\label{1.11rh}
\begin{cases}
-\sigma(\rho_+-\rho_-)+(\rho_+u_{1+}-\rho_-u_{1-})=0,\\
-\sigma(\rho_+u_{1+}-\rho_-u_{1-})+(\rho_+u_{1+}^2-\rho_-u_{1-}^2)+(p(\rho_+)-p(\rho_-))=0,
\end{cases}
\end{equation}
and the Lax entropy condition
\begin{equation}\label{1.12}
\rho_->\rho_+, \qquad u_{1-}>u_{1+},
\end{equation}
where $\sigma$ is shock speed.

When $\tau=0$, system \eqref{x1.6} reduces to the classical 3-D compressible Navier-Stokes equations, whose asymptotic behavior, especially in 1-D, has been extensively studied. Specifically, for shock profile initial data, by using anti-derivative method, Matsumura and Nishihara \cite{MNS}, and Goodman \cite{GD} first established the stability of traveling waves under small initial disturbances with zero mass condition. Subsequent works by Liu \cite{LTP}, Szepessy and Xin \cite{SXIN}, Liu and Zeng \cite{LTPZ} extended these results by introducing constant shifts to remove the zero mass condition. Huang and Matsumura \cite{HM} further demonstrated the asymptotic stability of composite wave consisting of two viscous shocks for the 1-D full compressible Navier-Stokes equations, provided the shock strengths are small and of the same order.

However, the method of anti-derivative proves to be incompatible when dealing with composite waves comprising both viscous shock and rarefaction. In this context, a new approach based on the relative entropy method and the theory of $a$-contraction with shifts has provided a powerful framework to address this issue. This theory was initially introduced by Bresch and Desjardins \cite{BD} and further developed in a series works including \cite{KV1, KV3, KV6, KV8, KV10, KV11, KV2, V27, V28}. Recently, Kang, Vasseur and Wang \cite{WY} successfully applied this framework to resolve the stability problem for composite waves of viscous shock and rarefaction. Subsequently, Han, Kang and Kim \cite{SMJ} employed similar techniques to establish the stability of composite waves of two viscous shocks with independently small strengths. Furthermore, Wang and Wang \cite{WWS} extended the relative entropy and the $a$-contraction with shifts theory to prove the stability of planar viscous shock wave for 3-D compressible Navier-Stokes equations. Building upon the framework established in these work, the present paper aims to investigate the nonlinear asymptotic stability of planar viscous shock wave for the 3-D relaxed compressible Navier-Stokes equations \eqref{x1.6}. For other related results, we refer the reader to \cite{MNR1, MNR2, LW, LWW}.

When $\tau>0$, for the one dimensional version of system \eqref{x1.6}, Hu-Wang \cite{ZWH} and Hu-Wang \cite{XFH} analyzed the linear stability of the viscous shock wave and the nonlinear stability of rarefaction waves, respectively.  Freist\"uhler \cite{FRE1} obtained the nonlinear stability of viscous shock waves under shock profile initial data. Recently, Guan and Hu \cite{GHSR, GHSS} demonstrated the time-asymptotic stability of two composite waves including rarefaction and viscous shock as well as two viscous shocks by using the methods of $a$-contraction with shift and relative entropy methods.  

By introducing the specific volume variable $v=\rho^{-1}$,  we reformulate the problem \eqref{x1.6}-\eqref{1.7} as
\begin{equation}\label{1.6}
\begin{cases}
\rho(\partial_t v+u\cdot\nabla_x v)=\ddiv_x u,\\
 \rho(\partial_t u+ u\cdot\nabla_x u) +\nabla_x p(\rho)=\ddiv_x \Pi_1+\nabla_x\Pi_2,\\
\tau \rho \left(\partial_t\Pi_1+ u\cdot\nabla_x \Pi_1\right)+\Pi_1=
   \mu\left(\nabla_x u+(\nabla_x u)^T-\frac{2}{3}\ddiv_x u \mathrm I_3\right),\\
\tau \rho \left(\partial_t\Pi_2+ u\cdot\nabla_x \Pi_2\right)+\Pi_2=\lambda\ddiv_x u,
\end{cases}
\end{equation}
with initial conditions
\begin{align} \label{1.13}
(v, u, \Pi_1, \Pi_2)(0,x)=(v_0, u_0, \Pi_{10}, \Pi_{20})(x)
\rightarrow(v_{\pm}, u_{\pm}, \mathrm O_{3\times3}, 0) \quad (x_1\rightarrow\pm\infty),
\end{align}
where $v_0(x)=\frac{1}{\rho_0(x)},v_\pm=\frac{1}{\rho_\pm}>0$.

This paper is devoted to the study of both the nonlinear asymptotic stability of planar viscous shock waves and the relaxation limit for system \eqref{1.6}, subject to the initial data \eqref{1.13}. We proceed in several steps. First, we derive the planar viscous shock solution (Lemma \ref{pvsw}) and introduce the shift and weight functions (\eqref{shiftx}, \eqref{weighta}). Next, we perform a weighted relative entropy estimate using standard energy methods. A key difficulty arises here: the dissipation in the relaxed system is insufficient to control $(v-v^s)$. To overcome this, we augment the analysis by artificially adding the first-order dissipation of $(p(v)-p(v^s))$, which enables the application of the multidimensional Poincar\'e inequality (\cite{WWS}, Lemma \ref{poinbds}) to secure the necessary control. Crucially, the added term can finally be bounded by synthesizing the entropy estimates with the first-order energy and dissipation estimates. To close the energy estimates, we further establish the required $H^3$ bounds. This allows us to conclude the global stability of the planar viscous shock wave. We emphasize that all the preceding estimates are independent of the relaxation parameter $\tau$. Consequently, by a standard compactness argument, we demonstrate that as $\tau$ tends to zero, the solutions of the relaxed system converge globally to those of the classical Navier-Stokes equations.


Our main theorem are states as follows:

\begin{theorem}\label{th1.1}
Assume the constant states $(v_\pm, u_{1\pm}) \in \mathbb{R}^+ \times \mathbb{R}$ satisfy \eqref{1.11rh}-\eqref{1.12}, and the relaxation parameter $\tau$ satisfies
\begin{equation}\label{sccs}
\tau\leq \min\{\inf\limits_{z\in[v_-,v_+]}\frac{\frac{4\mu}{3}+\lambda}{2|\sigma_\ast^2+p^{\prime}(z)|},
1\},
\end{equation}
 where $\sigma_\ast=\sqrt{\frac{p(v_+)-p(v_-)}{v_--v_+}}$. Let $(v^s, u^s, \Pi_1^s, \Pi_2^s)$ be the planar 2-viscous shock solutions of \eqref{1.6}-\eqref{1.13}. Then, there exist constants $\delta_0,\varepsilon_0>0$ such that for any initial data $(v_0, u_0, \Pi_{10}, \Pi_{20})$ satisfying
\begin{equation}\label{csz}
\sum\limits_{\pm}\left(\|(v_0-v_{\pm},u_0-u_{\pm})\|_{L^2(\mathbb{R}_\pm\times\mathbb{T}^2)}\right)
+\|\nabla_x(v_0,u_0)\|_{H^2(\Omega)}
+\sqrt{\tau}\|(\Pi_{10}, \Pi_{20})\|_{H^3(\Omega)}<\varepsilon_0,
\end{equation}
where $\mathbb{R}_+:=-\mathbb{R}_-=(0,+\infty)$ and $|v_--v_+|\leq\delta_0$, the initial value problem \eqref{1.6}-\eqref{1.13} admits a unique global-in-time solution $(v, u, \Pi_1, \Pi_2)\in C^1((0, +\infty)\times \Omega)$. Moreover, there exist an absolutely continuous shift $X(t)$ such that
\begin{equation}\label{lxx1}
\begin{aligned}
&v(t,x)-v^s(x_1-\sigma t-X(t))\in C(0,+\infty;H^3(\Omega)),\\
&u(t,x)-u^s(x_1-\sigma t-X(t))\in C(0,+\infty;H^3(\Omega)),\\
&\Pi_1(t,x)-\Pi_1^s(x_1-\sigma t-X(t))\in C(0,+\infty;H^3(\Omega),\\
&\Pi_2(t,x)-\Pi_2^s(x_1-\sigma t-X(t))\in C(0,+\infty;H^3(\Omega),
\end{aligned}
\end{equation}
and the following uniform estimates hold
\begin{equation}\label{lxx2}
\begin{aligned}
&\sup\limits_{t\in[0,+\infty)}\|(v-v^s, u-u^s, \sqrt\tau(\Pi_1-\Pi_1^s), 
\sqrt\tau(\Pi_2-\Pi_2^s))\|^2_{H^3(\Omega)}\\
&\qquad+\int_0^{+\infty}\left(\|\left(\nabla_x(v-v^s),\nabla_x (u-u^s)\right)\|^2_{H^2(\Omega)}+\|(\Pi_1-\Pi_1^s, \Pi_2-\Pi_2^s)\|^2_{H^3(\Omega)}\right)\dif t\\
&\leq C_0\left(\|(v_0-v^s(\cdot), u_0-u^s(\cdot))\|^2_{H^3(\Omega)}
+\tau\|(\Pi_{10}-\Pi_1^s(\cdot), \Pi_{20}-\Pi_2^s(\cdot))\|^2_{H^3(\Omega)}\right),
\end{aligned}
\end{equation}
where $C_0$ is a universal constant independent of $\tau$. Additionally, the time-asymptotic stability holds
\begin{equation}\label{jjwdx}
\begin{aligned}
&\lim\limits_{t\rightarrow+\infty}\sup\limits_{x\in\Omega}|(v, u)(t,x)-(v^s, u^s)(x_1-\sigma t-X(t))|=0,\\
&\lim\limits_{t\rightarrow+\infty}\sup\limits_{x\in\Omega}\sqrt{\tau}|(\Pi_1, \Pi_2)(t,x)-(\Pi_1^s, \Pi_2^s)(x_1-\sigma t-X(t))|=0,\\
\end{aligned}
\end{equation}
with
\begin{equation}\label{1.11}
\lim\limits_{t\rightarrow+\infty}|\dot{X}(t)|=0.
\end{equation}
\end{theorem}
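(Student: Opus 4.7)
The plan is to combine the $a$-contraction with shifts framework with hierarchical energy estimates, all carried out uniformly in the relaxation parameter $\tau$. I first move into the shifted frame $\xi=x_1-\sigma t-X(t)$ and introduce the perturbation $(\phi,\psi,\Xi_1,\Xi_2):=(v,u,\Pi_1,\Pi_2)-(v^s,u^s,\Pi_1^s,\Pi_2^s)(\xi)$, with the planar 2-viscous shock profile taken from Lemma \ref{pvsw}. The absolutely continuous shift $X(t)$ is defined through an ODE designed to cancel the bad hyperbolic contribution arising from the weight $a(\xi)$ (chosen as in \cite{WWS,SMJ}) in the relative entropy estimate. Global existence reduces, via a local existence result plus continuation, to closing the a priori estimate
\[
\|(\phi,\psi)\|_{H^3(\Omega)}^2+\tau\|(\Xi_1,\Xi_2)\|_{H^3(\Omega)}^2+\int_0^t\mathcal{D}(s)\,\dif s\le C\varepsilon_0^2
\]
under a matching smallness ansatz, where $\mathcal{D}$ is the dissipation functional.

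The core computation is the weighted relative entropy estimate. I would work with the modified energy $a(\xi)\eta(v,u\mid v^s,u^s)+\tfrac{\tau a}{2}(|\Xi_1|^2+|\Xi_2|^2)$, where $\eta$ is the classical mechanical relative entropy. Differentiating in time and substituting the shift ODE produces a strictly negative good term controlling $\nabla_x\psi$, $(\Xi_1,\Xi_2)$, the transverse derivatives of $\psi$, and the localized perturbation $\phi$ across the shock layer, balanced against a bad term of size $\delta\|p(v)-p(v^s)\|_{L^2}^2$ coming from $a^\prime$. The main obstacle, flagged in the introduction, is that the relaxed constitutive law supplies dissipation for $\nabla_x u$ and $\Xi_i$ but not directly for $\phi$ away from the shock layer. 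I would overcome this by artificially adding a first-order pressure energy: multiplying the $v$-equation rewritten as an equation for $p(v)-p(v^s)$ by $p(v)-p(v^s)$ generates a time derivative plus $-(p(v)-p(v^s))\,\ddiv_x\psi$; invoking the momentum equation to rewrite $\nabla_x(p(v)-p(v^s))$ in terms of $\ddiv_x\Xi_1+\nabla_x\Xi_2$ and lower-order contributions, and then applying the multidimensional Poincar\'e inequality of Lemma \ref{poinbds} on $\mathbb{T}^2$, converts the resulting structure into dissipation of $\phi$ minus controllable transverse quantities, which are in turn absorbed by the good term.

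Next I would perform higher-order energy estimates up to $H^3$. For each multi-index $|\alpha|\le 3$, applying $\partial_x^\alpha$ to \eqref{1.6} and pairing with $(\partial_x^\alpha\phi,\partial_x^\alpha\psi,\tau\partial_x^\alpha\Xi_1,\tau\partial_x^\alpha\Xi_2)$ yields, after cancellation of pressure and symmetric top-order terms, an identity of the form
\[
\frac{d}{dt}\mathcal{E}_\alpha+\mathcal{D}_\alpha=\mathcal{R}_\alpha,
\]
where $\mathcal{E}_\alpha$ is a $\tau$-weighted squared energy, $\mathcal{D}_\alpha$ controls $\|\nabla_x\partial_x^\alpha\psi\|_{L^2}^2+\|\partial_x^\alpha\Xi_i\|_{L^2}^2$, and $\mathcal{R}_\alpha$ gathers commutator and nonlinear remainders, bounded by $\varepsilon_0\mathcal{D}$ via Moser-type inequalities and by shock-profile derivative bounds of order $\delta$. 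The dissipation of $\nabla_x\partial_x^\alpha\phi$, again absent at this stage, is recovered by iterating the Poincar\'e-based absorption trick from the previous paragraph, using the $\partial_x^\alpha$-differentiated momentum equation. All constants are tracked to be independent of $\tau$; in particular the $\tau$-weighted stress norms stay on the energy side, which is why $\tau\|(\Xi_1,\Xi_2)\|_{H^3}^2$ appears on both sides of \eqref{lxx2}.

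Combining the three steps closes the continuity argument and yields global existence together with \eqref{lxx2}. The decay \eqref{jjwdx} follows from the time integrability of the full dissipation, a uniform bound on $\partial_t(\phi,\psi,\sqrt\tau\,\Xi_i)$ read off directly from the equations, and the elementary lemma that any $f\in L^1(0,\infty)$ with bounded derivative satisfies $f(t)\to 0$; Sobolev embedding then promotes the convergence to $L^\infty_x$. The vanishing of $|\dot X(t)|$ is immediate from the ODE defining $X$ together with the $L^\infty$-decay of the perturbation in a neighborhood of the shock. I expect the hardest single point to be the one identified above: supplying the missing dissipation for $v-v^s$ in the relaxed setting, executed consistently at every order of the hierarchy and with constants independent of $\tau$.
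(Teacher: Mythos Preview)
Your proposal is essentially the paper's approach: local existence plus continuation via $\tau$-uniform a priori estimates, built from the weighted relative entropy identity with the shift ODE, the artificially added dissipation $D(t)=\int a\,|\nabla(p(v)-p(v^s))|^2/(\gamma p^{1+1/\gamma}(v))$ closed via the Poincar\'e-type inequality of Lemma~\ref{poinbds}, and an $H^3$ hierarchy of energy and dissipation estimates. Two points in your description need correction before execution. First, Lemma~\ref{poinbds} is not a Poincar\'e inequality ``on $\mathbb{T}^2$'': it lives on $[0,1]\times\mathbb{T}^2$ and is invoked only after the change of variables $y_1=\delta^{-1}(p(v_-)-p((v^s)^{-X}))$ mapping the shock profile onto $[0,1]$; this reparametrization is what produces the weights $y_1(1-y_1)$ matching $D(t)$ and is essential for controlling the $x_1$-direction, not merely the transverse one. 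Second, the higher-order dissipation of $\partial_\xi^\alpha\nabla_\xi\phi$ is \emph{not} obtained by ``iterating the Poincar\'e-based absorption trick'' (that device is used only at the lowest order, Lemma~\ref{le4.6}); instead the paper multiplies the $\partial_\xi^\alpha$-differentiated momentum equation by $\partial_\xi^\alpha\nabla_\xi\phi$ and the stress equations by $\partial_\xi^\alpha\nabla_\xi\psi$ (Lemmas~\ref{lehs1} and \ref{lehs3}), which directly generates $\int |p'(v)|\,|\partial_\xi^\alpha\nabla_\xi\phi|^2$ and $\int |\partial_\xi^\alpha\nabla_\xi\psi|^2$, and then balances these against the borrowed $D(t)$ term sitting on the right-hand side of the zero-order estimate (Lemma~\ref{le0}).
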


Furthermore, based on the uniform estimates \eqref{lxx2}, we have the following global convergence theorem.
\begin{theorem}\label{th1.2}
Let $(v^{\tau}, u^{\tau}, \Pi_1^{\tau}, \Pi_2^{\tau})$ be the global solutions obtained in Theorem \ref{th1.1}. Then, there exists functions $(v^0 ,u^0)\in L^{\infty}\left((0, +\infty);H^3(\Omega)\right)$ and $(\Pi_1^0, \Pi_2^0)\in L^2\left((0, +\infty);H^3(\Omega)\right)$, such that, as $\tau\rightarrow0$
\begin{align*}
(v^{\tau}, u^{\tau})\rightharpoonup (v^0, u^0) \qquad weak-*\quad in \quad L^{\infty}\left((0, +\infty);H^3(\Omega)\right),\\
(\Pi_1^{\tau}, \Pi_2^{\tau})\rightharpoonup (\Pi_1^0, \Pi_2^0) \qquad weakly- \quad in \quad L^2\left((0, +\infty);H^3(\Omega)\right),
\end{align*}
where $(v^0, u^0)$ is the solution to the classical 3-D isentropic compressible Navier-Stokes equations \eqref{1.2} with initial value $(v_0, u_0)$. Moreover,
\[
\Pi_1^0=\mu\left(\nabla_x u^0+(\nabla_x u^0)^T-\frac{2}{3}\ddiv_x u^0\mathrm I_3\right),\quad
\Pi_1^0=\lambda \ddiv_x u^0.
\]
\end{theorem}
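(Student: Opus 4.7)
The plan is to combine the uniform-in-$\tau$ estimates \eqref{lxx2} from Theorem \ref{th1.1} with a standard Aubin-Lions compactness argument, and then to identify the weak limits by exploiting the relaxation equations themselves.

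\emph{Step 1 (Extraction of weak limits).} Since the bounds in \eqref{lxx2} are independent of $\tau$, the family $\{(v^\tau-v^s,u^\tau-u^s)\}_{\tau>0}$ is bounded in $L^\infty(0,\infty;H^3(\Omega))$, while $\{(\Pi_1^\tau-\Pi_1^s,\Pi_2^\tau-\Pi_2^s)\}_{\tau>0}$ is bounded in $L^2(0,\infty;H^3(\Omega))$. Banach-Alaoglu then yields a subsequence and limits $(v^0,u^0)$, $(\Pi_1^0,\Pi_2^0)$ realising the weak-$*$ and weak convergences claimed.

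\emph{Step 2 (Strong compactness of $(v^\tau,u^\tau)$).} From the first two equations of \eqref{1.6} and the bound \eqref{lxx2} one reads off uniform estimates of $(\partial_t v^\tau,\partial_t u^\tau)$ in $L^2(0,T;H^2(\Omega))$ for every $T>0$. Combined with the uniform $L^\infty(0,T;H^3)$ control of the traces, the Aubin-Lions lemma gives, after a further extraction,
\[
(v^\tau,u^\tau)\longrightarrow(v^0,u^0)\quad\text{in }C([0,T];H^s_{\mathrm{loc}}(\Omega)),\qquad s<3,
\]
which is enough to pass to the limit in the quadratic nonlinearities $\rho^\tau u^\tau\otimes u^\tau$, $p(\rho^\tau)$, etc.

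\emph{Step 3 (Identification of the limit stresses).} Using mass conservation, the third equation of \eqref{1.6} can be put in the conservative form $\tau\partial_t(\rho^\tau\Pi_1^\tau)+\tau\ddiv_x(\rho^\tau u^\tau\Pi_1^\tau)+\Pi_1^\tau=\mu\bigl(\nabla_x u^\tau+(\nabla_x u^\tau)^T-\tfrac{2}{3}\ddiv_x u^\tau\,\mathrm{I}_3\bigr)$, and analogously for $\Pi_2^\tau$. Testing against smooth compactly supported functions and integrating by parts transfers the full factor $\tau$ onto $\Pi_i^\tau$. Because \eqref{lxx2} yields $\|\tau\Pi_i^\tau\|_{L^\infty H^3}=O(\sqrt{\tau})\to 0$, the first two terms drop out, while the remaining terms pass to the limit by weak convergence of $\Pi_i^\tau$ and strong convergence of $u^\tau$. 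We therefore recover the Newtonian constitutive laws
\[
\Pi_1^0=\mu\bigl(\nabla_x u^0+(\nabla_x u^0)^T-\tfrac{2}{3}\ddiv_x u^0\,\mathrm{I}_3\bigr),\qquad \Pi_2^0=\lambda\,\ddiv_x u^0.
\]

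\emph{Step 4 (Identification of the limit system, and the obstacle).} Substituting the expressions of Step 3 into the weak form of the continuity and momentum equations of \eqref{1.6}, and invoking Step 2 for the nonlinearities, one concludes that $(v^0,u^0)$ is a distributional solution of the classical 3-D isentropic compressible Navier-Stokes system \eqref{1.1}-\eqref{1.2} with initial data $(v_0,u_0)$. Uniqueness of the limit then upgrades the convergence from a subsequence to the full family. The main technical difficulty lies precisely in Step 3: one has no uniform control on $\partial_t\Pi_i^\tau$ itself, so strong convergence of $\Pi_i^\tau$ cannot be asserted, and any $\tau$-independent estimate on $\Pi_i^\tau$ in a stronger topology than $L^2H^3$ is absent. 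The cure is to transfer the whole factor $\tau$ onto $\Pi_i^\tau$ by integration by parts and to exploit the key uniform bound $\sqrt{\tau}\|\Pi_i^\tau-\Pi_i^s\|_{L^\infty H^3}\lesssim 1$, which is exactly what \eqref{lxx2} provides and which forces the relaxation terms to vanish.
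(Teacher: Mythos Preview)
Your proposal is correct and follows essentially the same route as the paper: uniform bounds from \eqref{lxx2} for weak extraction, Aubin--Lions via $L^2_t H^2_x$ control of $(\partial_t v^\tau,\partial_t u^\tau)$ for strong compactness, and the observation that $\sqrt{\tau}\,\Pi_i^\tau$ stays bounded so the relaxation terms vanish in the distributional limit. The one detail you gloss over and the paper makes explicit is that the planar shock profile $(v^s,u^s,\Pi_1^s,\Pi_2^s)$ and the shift $X(t)$ themselves depend on $\tau$, so their convergence as $\tau\to 0$ (to the classical Navier--Stokes viscous shock) must be established separately before combining with the convergence of the perturbations; this is routine from the uniform-in-$\tau$ bounds of Lemma~\ref{pvsw}.
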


This paper is organized as follows. In  Section 2, we  introduce basic concept, including planar 2-viscous shock waves, the shift $X(t)$, and the weighted function $a(t, x_1)$. The a priori estimates (Proposition \ref{p1}) are present in Section 3,  which immediately yield Theorem \ref{th1.1}.  The  proof of  the a priori estimates (Proposition \ref{p1}) are given in Section 4. Finally, Section 5 proves the global-in-time convergence of solutions for the relaxed system \eqref{1.6} to those of the classical system \eqref{1.2}, thus finishing the proof of Theorem \ref{th1.2}.

\textbf{Notations:}  $L^p(\Omega)$ and $W^{k,p}(\Omega)$  ($1\le p \le\infty$) denote the usual Lebesgue and Sobolev spaces over $\Omega$ with the norm $\|\cdot \|_{L^p}$ and $\|\cdot\|_{W^{s,p}}$, respectively. Note that, when $k=0$, $W^{0,p}=L^p$. For $p=2$, $W^{k, 2}$ are abbreviated to $H^k$ as usual.
Let $T$ and $B$ be a positive constant and a Banach space, respectively. $C^i(0,T; B)(i \ge 0 )$ denotes the space of $B$-valued $i$-times continuously differentiable functions on $[0,T]$, and $L^p(0,T; B)$ denotes the space of $B$-valued $L^p$-functions on $[0,T]$. The corresponding space $B$-valued functions on $[0,\infty)$ are defined in an analogous manner. For $n\times n$ matrices $A=(a_{ij}), B=(b_{ij})$, we denote $A:B=\sum_{i=1}^{n}\sum_{j=1}^{n}a_{ij}b_{ij}$.

\section{Preliminaries}

\subsection{Planar viscous shock wave}

Firstly, we demonstrate the existence of planar viscous shock wave solutions for system \eqref{1.6}. Let $\xi=(\xi_1, \xi_2, \xi_3)$ with $\xi_1=x_1-\sigma t$ and $\xi_i=x_i$ $(i=2,3)$. Correspondingly, the planar 2-viscous shock wave profile is given by $(\rho^s, u^s, \Pi_1^s, \Pi_2^s)(\xi_1)$, where the velocity field takes the form $u^s(\xi_1):=(u_1^s(\xi_1), 0, 0)^T$ and $\Pi^s_1(\xi_1)=(\Pi^s_{ij}(\xi_1))_{3\times3}$.

Next, assume the functions $(\rho^s, u^s, \Pi_1^s, \Pi_2^s)(\xi_1)$ satisfies the far-field boundary conditions
\begin{equation}\label{2.1}
(\rho^s, u^s, \Pi_1^s, \Pi_2^s)(\xi_1)\rightarrow (\rho_\pm, u_\pm, O_{3\times3}, 0),\qquad (\xi_1\rightarrow\pm\infty).
\end{equation}
 Substituting $(\rho^s, u^s, \Pi_1^s, \Pi_2^s)(\xi_1)$ into $\eqref{x1.6}_1$ and \eqref{1.6} yields the following ordinary differential equations
\begin{equation}\label{2.2}
-\sigma\rho^s_{\xi_1}+(\rho^s u_1^s)_{\xi_1}=0,
\end{equation}
\begin{equation}\label{2.3}
\begin{cases}
\rho^s\left(-\sigma v^s_{\xi_1}+ u_1^sv^s_{\xi_1}\right)=(u_1^s)_{\xi_1},\\
\rho^s\left(-\sigma( u_1^s)_{\xi_1}+u_1^s(u_1^s)_{\xi_1}\right)+p(v^s)_{\xi_1}=
                            (\Pi_{11}^s)_{\xi_1}+(\Pi_2^s)_{\xi_1},\\
\tau \rho^s \left(-\sigma(\Pi^s_{1})_{\xi_1}+ u_1^s(\Pi^s_{1})_{\xi_1}\right)+\Pi^s_{1}=
                          \text{diag}\{ \frac{4\mu}{3}(u_1^s)_{\xi_1}, -\frac{2\mu}{3}(u_1^s)_{\xi_1}, -\frac{2\mu}{3}(u_1^s)_{\xi_1}\},\\
\tau \rho^s \left(-\sigma(\Pi_2^s)_{\xi_1}+ u_1^s(\Pi^s_2)_{\xi_1}\right)+\Pi^s_2=\lambda(u_1^s)_{\xi_1},
\end{cases}
\end{equation}
 Integrating \eqref{2.2} from $(-\infty, \xi_1)$, we have
\begin{equation}\label{2.5}
-\sigma\rho^s+\rho^s u_1^s=-\sigma\rho_-+\rho_- {u_1}_-=:-\sigma_{\ast}.
\end{equation}
On the other hand, a direct calculation in $\eqref{2.3}_3$ gives
\begin{equation}\label{2.4}
\Pi_{22}^s(\xi_1)=\Pi_{33}^s(\xi_1)=-\frac{1}{2}\Pi^s_{11}(\xi_1),\quad \Pi^s_{ij}(\xi_1)=0 (i\neq j).
\end{equation}
Therefore, the system \eqref{2.3} can be rewritten as
\begin{equation}\label{2.6}
\begin{cases}
-\sigma_{\ast}v^s_{\xi_1}=(u_1^s)_{\xi_1},\\
-\sigma_{\ast}( u_1^s)_{\xi_1}+p(v^s)_{\xi_1}=
                            (\Pi_{11}^s)_{\xi_1}+(\Pi_2^s)_{\xi_1},\\
-\tau\sigma_{\ast} (\Pi^s_{11})_{\xi_1}+\Pi^s_{11}=\frac{4\mu}{3}(u_1^s)_{\xi_1},\\
-\tau\sigma_{\ast} (\Pi_2^s)_{\xi_1}+\Pi^s_2=\lambda(u_1^s)_{\xi_1},
\end{cases}
\end{equation}
with the far field condition \eqref{2.4}. Further integrating the equations $\eqref{2.6}_1$ and $\eqref{2.6}_2$ with respect to $\xi_1$, it holds
\begin{equation}\label{2.7}
\begin{cases}
\sigma_\ast v^s+u_1^s=\sigma_\ast v_-+u_{1-}=\sigma_\ast v_++u_{1+},\\
\Pi^s_{11}+\Pi^s_2=-\sigma_\ast(u_1^s-u_{1-})+(p(v^s)-p(v_-)).
\end{cases}
\end{equation}
In particularly, we can get
\begin{equation}\label{2.8}
\begin{cases}
\sigma_\ast v_-+u_{1-}=\sigma_\ast v_++u_{1+},\\
-\sigma_\ast(u_{1+}-u_{1-})+(p(v_+)-p(v_-))=0.
\end{cases}
\end{equation}
Thus, we imply that $\sigma_\ast=\sqrt{\frac{p(v_-)-p(v_+)}{v_+-v_-}}>0$ for 2-shock wave.
Subsequently, combining $\eqref{2.6}_3$ and $\eqref{2.6}_4$, we obtain that
\begin{equation}\label{2.9}
-\tau\sigma_\ast\Big((\Pi^s_{11})_{\xi_1}+(\Pi^s_2)_{\xi_1}\Big)+(\Pi^s_{11}+\Pi^s_2)
=(\frac{4\mu}{3}+\lambda)(u_1^s)_{\xi_1},
\end{equation}
then, substituting $\eqref{2.6}_1$, $\eqref{2.6}_2$ and $\eqref{2.7}_2$ into $\eqref{2.9}$, we derive that
\begin{equation}\label{2.10}
v^s_{\xi_1}=\frac{h(v^s)}{\sigma_\ast(\frac{4}{3}\mu+\lambda)+\tau\sigma_\ast h^{\prime}(v^s)},
\end{equation}
where $h(v^s)=\sigma_\ast^2(v_--v^s)+(p(v_-)-p(v^s))$.

The equations for the planar viscous shock \eqref{2.6} are similar to those for a 1-D traveling wave. The difference is that the planar shock has two equation ($\Pi_1$ and $\Pi_2$) instead of one. Despite this difference, solving the respective equations yields results analogous to the 1-D case. Then, we have the following lemma, see \cite{GHSR, GHSS}.

\begin{lemma}\label{pvsw}
Under the assumption \eqref{sccs} for $\tau$, given any states $(v_-,u_{1-})$, $(v_+,u_{1+})$, and $\sigma_\ast>0$ satisfying R-H condition \eqref{1.11rh} and Lax  entropy condition \eqref{1.12}, there exists a positive constant $C$ independent of $\tau$ such that the following is true. The planar 2-viscous shock wave solutions $(v^s,u_{1}^s,\Pi_{11}^s,\Pi_2^s)(\xi_1)$ connecting $(v_-, u_{1-}, O_{3\times3}, 0)$ and $(v_+, u_{1+}, O_{3\times3}, 0)$ exists uniquely and satisfy
\begin{align*}
v^s_{\xi_1}>0,\quad
(u_1^s)_{\xi_1}=-\sigma_\ast v^s_{\xi_1}<0,
\end{align*}
and
\begin{align*}
&|v^s(\xi_1)-v_-|\leq C\delta e^{-C\delta|\xi_1|},\quad
|u_1^s(\xi_1)-u_{1-}|\leq C\delta e^{-C\delta|\xi_1|},\qquad  \forall \xi_1<0,\\
&|v^s(\xi_1)-v_+|\leq C\delta e^{-C\delta|\xi_1|},\quad
|u_1^s(\xi_1)-u_{1+}|\leq C\delta e^{-C\delta|\xi_1|},\qquad  \forall \xi_1>0,\\
&|(v^s_{\xi_1},u^s_{1\xi_1})|\le C\delta^2e^{-C\delta|\xi_1|},\quad
|\partial_{\xi_1}^{k}(v^s, u_1^s)| \le C\delta v^s_{\xi_1},\quad\quad\qquad\quad
\forall \xi_1\in\mathbb{R},\\
&|(\Pi^s_{11}, \Pi^s_2)|\le Cv^s_{\xi_1},\qquad\qquad
|\partial^{k-1}_{\xi_1}(\Pi^s_{11}, \Pi^s_2)|\le C\delta v^s_{\xi_1},
\qquad\quad\quad\forall\xi_1\in\mathbb{R},
\end{align*}
for $k=2,3,4,...$, where $\delta$ denote the strength of the shock wave as $\delta:=|p(v_-)-p(v_+)|\sim|v_--v_+|\sim|u_{1-}-u_{1+}|$.
\end{lemma}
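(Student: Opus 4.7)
The plan is to reduce the planar traveling-wave problem to the single scalar autonomous ODE \eqref{2.10} for $v^s$ and then run standard phase-plane analysis, in parallel with the one-dimensional argument of \cite{GHSR, GHSS}. The remaining profiles are recovered algebraically: $u_1^s$ from $\eqref{2.7}_1$, while $\Pi_{11}^s$ and $\Pi_2^s$ solve the linear first-order ODEs $\eqref{2.6}_3, \eqref{2.6}_4$ with right-hand sides proportional to $(u_1^s)_{\xi_1} = -\sigma_\ast v^s_{\xi_1}$. Consequently, existence, monotonicity, and decay of $(v^s,u_1^s,\Pi_{11}^s,\Pi_2^s)$ all reduce to properties of
\[
v^s_{\xi_1} = \frac{h(v^s)}{\sigma_\ast(\tfrac{4\mu}{3}+\lambda) + \tau\sigma_\ast h'(v^s)}, \qquad h(v) := \sigma_\ast^2(v_- - v) + (p(v_-) - p(v)).
\]

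The first step is to control the denominator uniformly in $\tau$. Since $h'(v) = -\sigma_\ast^2 - p'(v)$, the smallness condition \eqref{sccs} yields
\[
\sigma_\ast\bigl(\tfrac{4\mu}{3}+\lambda\bigr) + \tau\sigma_\ast h'(v) \ge \tfrac{1}{2}\sigma_\ast\bigl(\tfrac{4\mu}{3}+\lambda\bigr) > 0
\]
for every $v \in [v_-, v_+]$ and every admissible $\tau$. Next I analyze the numerator: the Rankine-Hugoniot relation $\eqref{2.8}_2$ gives $h(v_-) = h(v_+) = 0$; strict convexity of $p(v)=v^{-\gamma}$ makes $h$ strictly concave on $[v_-, v_+]$, so $h > 0$ on $(v_-, v_+)$. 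The Lax condition \eqref{1.12}, together with the mean-value identity $\sigma_\ast^2 = -p'(\tilde v)$ for some $\tilde v \in (v_-, v_+)$, translates into $h'(v_-) > 0 > h'(v_+)$ with $|h'(v_\pm)| \sim \delta$.

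With the right-hand side $F(v) := h(v)/[\sigma_\ast(\tfrac{4\mu}{3}+\lambda)+\tau\sigma_\ast h'(v)]$ positive on the open interval and vanishing linearly at the endpoints with $F'(v_-)>0>F'(v_+)$, the scalar ODE admits a unique (up to translation) monotone heteroclinic $v^s$ with $v^s \to v_\pm$ as $\xi_1 \to \pm\infty$; this is either by quadrature or by the standard unstable-stable manifold picture. Linearization at $v_\pm$ gives exponential decay of $|v^s - v_\pm|$ at rate $|h'(v_\pm)|/[\sigma_\ast(\tfrac{4\mu}{3}+\lambda)+O(\tau)] \sim \delta$, and $|v^s_{\xi_1}| \lesssim \delta^2 e^{-C\delta|\xi_1|}$ then follows from $v^s_{\xi_1} = F(v^s)$ together with $h(v) = O(\delta^2)$ on a $\delta$-neighborhood of $v_\pm$. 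Higher-order derivative bounds come from differentiating the ODE and bootstrapping; the bounds on $(\Pi_{11}^s, \Pi_2^s)$ and their derivatives follow by solving the linear first-order ODEs $\eqref{2.6}_3, \eqref{2.6}_4$ and plugging in the estimates for $(u_1^s)_{\xi_1}$.

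The only place where the relaxed system differs substantively from the Newtonian one is in the $\tau$-dependent perturbation of the denominator, and the principal technical point — hence the main obstacle I expect — is verifying that this perturbation does not spoil either the qualitative phase-plane picture or the quantitative $\delta$-rate of exponential decay, uniformly in $\tau$. This is exactly what \eqref{sccs} guarantees: once the denominator is bounded below by a $\tau$-independent positive constant, the entire scalar ODE analysis from \cite{GHSR, GHSS} transfers verbatim and yields constants $C$ independent of $\tau$, as claimed.
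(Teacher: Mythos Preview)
Your proposal is correct and follows precisely the route the paper takes: the paper reduces the profile problem to the scalar ODE \eqref{2.10}, observes that the only difference from the one-dimensional setting of \cite{GHSR, GHSS} is the splitting into two stress components $\Pi_{11}^s,\Pi_2^s$ (each governed by a linear first-order ODE of identical structure), and then simply defers to \cite{GHSR, GHSS} for the phase-plane analysis and decay estimates. Your sketch in fact supplies more detail than the paper itself, including the explicit verification that \eqref{sccs} keeps the denominator bounded below by $\tfrac{1}{2}\sigma_\ast(\tfrac{4\mu}{3}+\lambda)$ uniformly in $\tau$, which is exactly the point where the relaxed model departs from the Newtonian one.
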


\subsection{Construction of shift}

We define the shift function $X(t)$ as the solution to the following ODE system
\begin{equation}\label{shiftx}
\begin{cases}
\begin{aligned}
&\dot{X}(t)=-\frac{M}{\delta}
\left[\int_{\mathbb{T}^2}\int_{\mathbb{R}}\frac{a^{-X}(\xi_1)}{\sigma_\ast}
\rho(u^s_1)_{\xi_1}^{-X}(p(v)-p((v^s)^{-X})\dif \xi_1\dif\xi^{\prime}\right.\\
&\qquad\qquad\qquad\left.
-\int_{\mathbb{T}^2}\int_{\mathbb{R}}a^{-X}(\xi_1)\rho p((v^s)^{-X})_{\xi_1}\left(v-(v^s)^{-X}\right)\dif \xi_1\dif\xi^{\prime}\right],\\
&X(0)=0,
\end{aligned}
\end{cases}
\end{equation}
where $M$ is the specific constant chosen as $M:=\frac{9(\gamma+1)\sigma_-^3v_-^2}{16\gamma p(v_-)}$ and $f^{-X}(\xi_1)$ denotes the shifted function $f^{-X}(\xi_1):=f(\xi_1-X(t))$. The shifted weight function $a$ is defined by
\begin{align}\label{weighta}
a(t,x_1):=a(x_1-\sigma_1t-X_1(t)),
\end{align}
where the base weight function has the form
\[
a(\xi_1)=1+\frac{\nu(p(v_-)-p(v^s))}{\delta},
\]
with the parameter $\nu$ satisfying
\[
\delta\ll\nu\le C\sqrt{\delta}.
\]
On the other hand, we known that $1<a<1+C\nu$ and
\begin{equation}\label{weight1}
a^{\prime}(\xi_1)=-\frac{\nu}{\delta}p(v^s)_{\xi_1},\qquad
|a^{\prime}|\le C\frac{\nu}{\delta}|v^s_{\xi_1}|.
\end{equation}

The definitions of the shift function $X(t)$ and the weight function $a(\xi_1)$ are analogous to those in \cite{GHSR, GHSS}, with the only distinction being the coordinate system: there are defined herein in Eulerian coordinates, as opposed to the Lagrangian coordinates used in the 1-D case. Then, we have the following lemma.
\begin{lemma}\label{le2.2}
For any $c_1, c_2>0$, there exists a constant $C>0$ such that for any $T>0$ and any function $v\in L^{\infty}((0,T )\times \Omega)$ satisfying
\begin{equation}\label{2.18}
c_1<v(t,x)<c_2,\quad \forall(t,x)\in[0, T]\times\Omega,
\end{equation}
the ODE \eqref{shiftx} has a unique absolutely continuous solution $X(t)$ on $[0, T]$. Moreover,
\begin{equation}\label{2.19}
|X(t)|\le C t, \qquad \forall \, 0\le t \le T.
\end{equation}
\end{lemma}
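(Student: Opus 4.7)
The plan is to interpret the defining ODE \eqref{shiftx} as a Carath\'eodory system $\dot X = F(t, X)$, with the given function $v(t,\cdot)$ entering as measurable-in-$t$ data, and to verify the hypotheses of the classical Carath\'eodory-Picard existence/uniqueness theorem. First I would show that $F$ is well-defined and uniformly bounded on $[0,T] \times \mathbb{R}$. The two integrands in \eqref{shiftx} contain the factors $(u_1^s)_{\xi_1}^{-X}$ and $p((v^s)^{-X})_{\xi_1}$, both of which decay exponentially in $|\xi_1 - X|$ by Lemma \ref{pvsw}; hence their $L^1(\mathbb{R})$-norms in $\xi_1$ are finite and, by translation invariance, independent of $X$. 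The remaining factors in the integrands are uniformly bounded in $L^\infty$: $\rho = 1/v$ lies in $[1/c_2, 1/c_1]$ by \eqref{2.18}, the weight satisfies $1 \le a^{-X} \le 1 + C\nu$, and both $p(v) - p((v^s)^{-X})$ and $v - (v^s)^{-X}$ are bounded since $v$ and $v^s$ are. Because $\mathbb{T}^2$ has finite measure, Fubini yields a uniform bound $|F(t, X)| \le C_\ast$ on $[0, T] \times \mathbb{R}$, with $C_\ast$ depending only on $c_1, c_2, \delta, \nu, M, \sigma_\ast$.

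Next I would verify the remaining Carath\'eodory hypotheses. For each fixed $X$, measurability of $t \mapsto F(t, X)$ follows from joint measurability of the integrand in $(t, x)$ — the only $t$-dependence is through the $L^\infty$ datum $v(t, x)$ and a smooth translation — combined with Fubini-Tonelli. For each fixed $t$, the map $X \mapsto F(t, X)$ is Lipschitz: differentiating under the integral sign produces integrands involving $a'$, $(u_1^s)_{\xi_1\xi_1}$, $(v^s)_{\xi_1}$, and $p(v^s)_{\xi_1\xi_1}$, each of which admits an exponentially-decaying envelope by \eqref{weight1} and Lemma \ref{pvsw} (in particular $|a'| \le C\nu\delta^{-1}|v^s_{\xi_1}|$ and $|\partial_{\xi_1}^k(v^s, u_1^s)| \le C\delta\, v^s_{\xi_1}$). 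Uniform $L^1$ control of these derivatives in $\xi_1$, together with the $L^\infty$ bounds on $\rho$ and $v$, gives a Lipschitz constant $L = L(c_1, c_2, \delta, \nu, M, \sigma_\ast)$, independent of $t$.

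With these properties in hand, the Carath\'eodory-Picard-Lindel\"of theorem delivers a unique absolutely continuous solution $X \in AC([0, T])$ with $X(0) = 0$. The bound \eqref{2.19} is then immediate: from $|\dot X(t)| \le C_\ast$ a.e. on $[0, T]$ and $X(0) = 0$, integration yields $|X(t)| \le C_\ast t$. I do not anticipate any genuine obstacle beyond careful bookkeeping of the exponentially-decaying bounds from Lemma \ref{pvsw}; the construction is an Eulerian-coordinate analogue of the 1-D Lagrangian argument already carried out in \cite{GHSR, GHSS}, so no new idea is required.
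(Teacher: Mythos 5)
Your proposal is correct and is essentially the argument the paper relies on: the paper gives no proof of Lemma \ref{le2.2}, deferring to the analogous construction in \cite{GHSR, GHSS} (ultimately the Carath\'eodory-type existence/uniqueness argument of Kang--Vasseur), which is exactly what you carry out — uniform boundedness of the right-hand side via the exponential decay of $(u_1^s)_{\xi_1}$ and $p(v^s)_{\xi_1}$ together with the $L^\infty$ bounds \eqref{2.18} on $v$, Lipschitz continuity in $X$ from the decaying derivatives of the profile and of $a$, and then Carath\'eodory--Picard--Lindel\"of plus integration of $|\dot X|\le C_\ast$ to get \eqref{2.19}. No gap.
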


\section{Local solution and a priori estimates}

Firstly, by changing of variable $(t, x)\rightarrow(t,\xi)$ we rewrite the system \eqref{1.6} into following system

\begin{equation}\label{3.1}
\begin{cases}
\rho\left(\partial_tv-\sigma\partial_{\xi_1}v+u\cdot\nabla_{\xi} v\right)=\ddiv_{\xi} u,\\
\rho\left(\partial_tu-\sigma\partial_{\xi_1}u+u\cdot\nabla_{\xi} u\right)+\nabla_{\xi}p(v)=\ddiv_{\xi}\Pi_1+\nabla_{\xi}\Pi_2,\\
\tau\rho\left(\partial_t\Pi_1-\sigma\partial_{\xi_1}\Pi_1+u\cdot\nabla_{\xi} \Pi_1\right)+\Pi_1
   =\mu\left(\nabla_{\xi} u+(\nabla_{\xi} u)^T-\frac{2}{3}\ddiv_{\xi} u \mathrm I_3\right),\\
\tau\rho\left(\partial_t\Pi_2-\sigma\partial_{\xi_1}\Pi_2+u\cdot\nabla_{\xi}\Pi_2\right)+\Pi_2
=\lambda\ddiv_{\xi} u.
\end{cases}
\end{equation}

Then,  following a method similar to that in \cite{hufes2017}, we establish the following local existence theory. For related results and alternative approaches, we refer the reader to \cite{JR, WY, MV20, TA}.

\begin{theorem}\label{th3.1}
Let $\underline{v}$ and $\underline{u}_1$ be smooth monotone functions satisfying
\[
\underline{v}(x_1)=v_{\pm} \quad \underline{u}_1(x_1)=u_{1\pm} \quad for \quad \pm x_1\geq1.
\]
For any constants $M_0,M_1,\kappa_1,\kappa_2,\kappa_3,\kappa_4$ with $M_1>M_0>0$ and $\kappa_1>\kappa_2>\kappa_3>\kappa_4>0$, there exists a constant $T_0>0$ such that if
\[
\begin{aligned}
&\|v_0-\underline{v}\|_{H^3}+\|u_0-\underline{u}\|_{H^3}
+\sqrt{\tau}\|(\Pi_{10}, \Pi_{20})\|_{H^3}\leq M_0,\\
&0<\kappa_3\leq v_0(x)\leq\kappa_2,\qquad \forall x\in\Omega,
\end{aligned}
\]
where $\underline{u}=(\underline{u}_1, 0, 0)^T$, then the system \eqref{3.1} with initial condition \eqref{1.13} admits a unique solution $(v, u, \Pi_1, \Pi_2)$ on $[0,T_0]$ with
\[
\begin{aligned}
(v-\underline{v}, u-\underline{u}, \Pi_1, \Pi_2)\in C([0,T_0];H^3),
\end{aligned}
\]
and
\[
\|v-\underline{v}\|_{L^{\infty}(0,T_0;H^3)}+\|u-\underline{u}\|_{L^{\infty}(0,T_0;H^3)}
+\sqrt{\tau}\|(\Pi_1, \Pi_2)\|_{L^{\infty}(0,T_0;H^2)}\leq M_1.
\]
Moreover,
\[
\kappa_4\leq v(t,x)\leq\kappa_1,\qquad \forall(t,x)\in[0,T_0]\times\Omega.
\]
\end{theorem}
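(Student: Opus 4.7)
The plan is to construct the local solution via a Picard-type iteration adapted to the hyperbolic--relaxation structure of \eqref{3.1}, following the approach of \cite{hufes2017}. I would first introduce the perturbation variables $\phi=v-\underline v$, $\psi=u-\underline u$, keeping $(\Pi_1,\Pi_2)$ as unknowns; the resulting system is symmetrizable in the standard compressible-fluid sense --- the $\nabla p(v)$ term is skew-adjoint against the mass equation once the latter is multiplied by a suitable function of $v$, while the two relaxation equations, after division by $\mu$ and $\lambda$ respectively, couple symmetrically to the momentum through the $Du$ and $\ddiv_\xi u$ terms. I would then define $(v^{n+1},u^{n+1},\Pi_1^{n+1},\Pi_2^{n+1})$ to solve the linear problem obtained by freezing the transport velocity and the density coefficient at the previous iterate $(v^n,u^n)$; each such linear symmetric-hyperbolic system with relaxation is solvable by a standard Galerkin/mollification argument combined with the symmetric-hyperbolic energy identity.

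The core analytical task is to close iteration-independent bounds on a common interval $[0,T_0]$. Applying $\partial^\alpha$ for $|\alpha|\le 3$ to each equation, testing against the appropriate symmetric partner, and controlling the commutators by Moser-type inequalities together with the embedding $H^2\hookrightarrow L^\infty$, one obtains a differential inequality of the form
\[
\tfrac{d}{dt}\mathcal E^{(n+1)}(t)\le C(M_1,\kappa_i)\bigl(1+\mathcal E^{(n+1)}(t)\bigr),\qquad \mathcal E^{(n+1)}(t):=\|(\phi^{n+1},\psi^{n+1})\|_{H^3}^2+\tau\|(\Pi_1^{n+1},\Pi_2^{n+1})\|_{H^3}^2,
\]
whose $\sqrt\tau$-weighting on the stress is essential: it is the normalization that makes the relaxation damping $-\|\Pi_j\|^2$ and the production $\mu Du,\lambda\ddiv u$ enter at the same order. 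Choosing $T_0$ small depending only on $M_0,M_1,\kappa_i$ (and not on $\tau\le 1$) closes the $H^3$ bound uniformly in $n$. Simultaneously, the pointwise bounds $\kappa_4\le v^{n+1}\le\kappa_1$ are preserved by reading the $v$-equation as a transport equation along the characteristics generated by $u^n-\sigma e_1$: the source $v\,\ddiv_\xi u^n$ is dominated by $CM_1$ from the $H^3$ bound, so Gr\"onwall's inequality moves the range of $v$ only slightly on a short time.

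For convergence I would establish contraction of the iteration in a lower-order norm (for instance $L^2$ weighted in the same way as $\mathcal E$); interpolating against the uniform $H^3$ bound then produces a strongly convergent subsequence whose limit lies in $L^\infty(0,T_0;H^3)$, and the time continuity $C([0,T_0];H^3)$ is recovered directly from the equations, while uniqueness follows from an analogous energy estimate on the difference of two solutions. The main obstacle I expect is the uniform-in-$\tau$ bookkeeping at the $H^3$ level: the highest-order commutator $[\partial^\alpha,u^n\cdot\nabla]\Pi_j$ naturally carries a factor of $\tau$, and one must verify that the relaxation damping $\|\Pi_j\|_{H^3}^2$ absorbs it without requiring any smallness of $\tau$. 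This is also why the statement records the weaker $H^2$ (rather than $H^3$) bound on $\sqrt\tau\,(\Pi_1,\Pi_2)$: it reflects the regularity actually propagated by the full transport, as opposed to that produced by the formal ODE solution formula $\Pi_j(t)=e^{-t/\tau}\Pi_j(0)+\tfrac{1}{\tau}\int_0^t e^{-(t-s)/\tau}(\mu Du,\lambda\ddiv u)(s)\,\dif s$ alone.
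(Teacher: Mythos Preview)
The paper does not actually give a proof of Theorem~\ref{th3.1}; it simply states the result and refers to \cite{hufes2017} (together with \cite{JR,WY,MV20,TA}) for the method. Your outline follows exactly that standard route---symmetrization via the $(-p'(v),1,\tfrac{1}{2\mu},\tfrac{1}{\lambda})$ multipliers, Picard iteration on a linearized symmetric-hyperbolic problem with frozen coefficients, uniform $H^3$ bounds using Moser-type commutator estimates and the embedding $H^2\hookrightarrow L^\infty$, contraction in a lower norm and interpolation to recover the limit---and is correct.

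One minor correction to your closing remark: you attach significance to the $H^2$ (rather than $H^3$) appearing in the bound on $\sqrt\tau\,(\Pi_1,\Pi_2)$, and offer an explanation in terms of regularity loss. This is almost certainly just a typo in the paper, not a deliberate weakening. The statement itself places $(\Pi_1,\Pi_2)\in C([0,T_0];H^3)$, the a priori estimate in Proposition~\ref{p1} requires control of $\sqrt\tau\,\|(\Pi_1-(\Pi_1^s)^{-X},\Pi_2-(\Pi_2^s)^{-X})\|_{H^3}$, and the energy argument you sketch propagates the full $H^3$ norm without loss: the damping term $c\|\partial^\alpha\Pi_j\|_{L^2}^2$ in the $\Pi_j$-equation is of order one, while the highest commutator $\tau\,[\partial^\alpha,u\cdot\nabla]\Pi_j$ carries a factor of $\tau\le 1$ and is therefore absorbed directly. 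There is nothing subtle hiding here.
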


To obtain the global solution, we establish the following a priori estimates.

\begin{prop}\label{p1}
Let $(v, u, \Pi_1, \Pi_2)$ be local solutions given by Theorem \ref{th3.1} on $[0,T]$ for some $T>0$ and $( v^s,  u^s, \Pi^s_1, \Pi^s_{2})$ be defined in \eqref{2.3}. Then there exist positive constants $\delta_0,\varepsilon_1$ such that if the shock waves strength satisfy $\delta<\delta_0$ and
\begin{equation}\label{3.4}
\sup_{0\le t\le T} \|(v-(v^s)^{-X}, u-(u^s)^{-X}, \sqrt{\tau}(\Pi_1-(\Pi^s_1)^{-X}), \sqrt{\tau}(\Pi_2-(\Pi^s_2)^{-X}))\|_{H^3}\leq\varepsilon_1,
\end{equation}
then the following estimates hold
\begin{equation}\label{3.5}
\begin{aligned}
&\sup\limits_{t\in[0, T]}\|(v-(v^s)^{-X}, u-(u^s)^{-X}, \sqrt{\tau}(\Pi_1-(\Pi^s_1)^{-X}), \sqrt{\tau}(\Pi_2-(\Pi^s_2)^{-X}))\|_{H^3}\\
&\quad+\int_0^{T}\|\sqrt{|(v^s)^{-X}_{\xi_1}|}|v-(v^s)^{-X}|\|_{L^2}^2\dif t
+\int_0^{T}\|\left(\nabla_\xi(v-(v^s)^{-X}),\nabla_\xi (u-(u^s)^{-X})\right)\|^2_{H^2}\dif t
\\
&\quad+\delta\int_0^{T}|\dot X(t)|^2\dif t
+\int_0^{T}\|(\Pi_1-(\Pi_1^s)^{-X}, \Pi_2-(\Pi_2^s)^{-X})\|^2_{H^3}\dif t\\
&\leq C_0\left(\|(v_0-v^s, u_0-u^s, \sqrt{\tau}(\Pi_{10}-\Pi_1^s), \sqrt{\tau}(\Pi_{20}-\Pi_2^s))\|^2_{H^3}\right),
\end{aligned}
\end{equation}
where $C_0$ independent of $T$ and $\tau$.

In addition, 
\begin{equation}\label{3.6}
|\dot{X}(t)|\leq C_0\|(v-(v^s)^{-X})(t,\cdot)\|_{L^{\infty}},\qquad\forall t\leq T.
\end{equation}
\end{prop}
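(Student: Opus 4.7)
The plan is to establish the a priori bound \eqref{3.5} by a three-level hierarchical energy scheme, following the roadmap sketched in the introduction: a zeroth-order weighted relative entropy estimate exploiting $a$-contraction with the shift $X(t)$, first-order energy and dissipation estimates, and higher-order estimates up to $H^3$. All constants must be tracked carefully so that they are independent of the relaxation parameter $\tau$.

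First I would work in the shifted variable $\xi_1=x_1-\sigma t-X(t)$ and form the relative entropy
\[
\eta(U\mid U^s)=\rho\Big(\tfrac{1}{2}|u-u^s|^2+Q(v\mid v^s)\Big)+\tfrac{\tau}{2\mu}\rho|\Pi_1-\Pi_1^s|^2+\tfrac{\tau}{2\lambda}\rho(\Pi_2-\Pi_2^s)^2,
\]
weighted by $a^{-X}(\xi_1)$ from \eqref{weighta}. Differentiating $\int_\Omega a^{-X}\eta\,\dif\xi$ in time along \eqref{3.1} produces the good dissipation $\int a^{-X}|(v^s)^{-X}_{\xi_1}|(v-(v^s)^{-X})^2\,\dif\xi$ generated by the weight ($a$-contraction), the relaxation dissipation $\|(\Pi_1-(\Pi_1^s)^{-X},\Pi_2-(\Pi_2^s)^{-X})\|_{L^2}^2$, and a term linear in $\dot X(t)$. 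The definition of $\dot X$ in \eqref{shiftx} is engineered precisely so that this linear term, combined with the most dangerous hyperbolic cross-term, yields a nonnegative contribution of size $\delta|\dot X|^2$. This furnishes the localized weighted $L^2$ control of $v-(v^s)^{-X}$, the $L^2$ control of $u-(u^s)^{-X}$ and $\sqrt{\tau}(\Pi_i-(\Pi_i^s)^{-X})$, and the $\delta\int_0^T|\dot X|^2\,\dif t$ bound.

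Since the weighted dissipation only pins down $v-(v^s)^{-X}$ near the shock layer, I would next add a first-order energy estimate: taking $\nabla_\xi$ of \eqref{3.1} and performing the standard $L^2$ energy method, one multiplies the relaxed stress equations $\eqref{3.1}_3$--$\eqref{3.1}_4$ by $\tau^{-1}$-scaled test functions to extract genuine dissipation on $\nabla_\xi u$ plus $O(\tau)$ commutator errors. This yields control of $\|\nabla_\xi(v-(v^s)^{-X},u-(u^s)^{-X})\|_{L^2}^2$ and $\|(\Pi_1-(\Pi_1^s)^{-X},\Pi_2-(\Pi_2^s)^{-X})\|_{H^1}^2$. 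The non-localized $L^2$ control of $p(v)-p((v^s)^{-X})$—the step the introduction flags as the key difficulty—is recovered by applying the multidimensional Poincar\'e inequality from \cite{WWS} (Lemma \ref{poinbds}) in the transverse $\xi'\in\mathbb T^2$ variable, and the artificially added dissipation of $p(v)-p((v^s)^{-X})$ is then absorbed by synthesizing the entropy estimate with the first-order estimate. Iterating the same procedure for second and third derivatives, the nonlinear and $\dot X$-driven source terms are absorbed via Sobolev embeddings together with the smallness of $\delta$ from Lemma \ref{pvsw} and the bootstrap assumption \eqref{3.4}; summing over all orders gives \eqref{3.5}.

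The pointwise bound \eqref{3.6} is essentially immediate from the defining ODE \eqref{shiftx}: each integrand carries a factor $(u_1^s)_{\xi_1}^{-X}$ or $p((v^s)^{-X})_{\xi_1}$ that, by Lemma \ref{pvsw}, decays exponentially in $\xi_1$ with total mass $O(\delta)$, so H\"older in $\xi_1$ and the uniform bounds on $\rho$ and $a^{-X}$ give $C\delta\|v-(v^s)^{-X}\|_{L^\infty}$ for each term; the prefactor $M/\delta$ then produces \eqref{3.6}. The principal obstacle throughout is the absence of any classical viscous smoothing for $v-(v^s)^{-X}$: the stress relaxation replaces parabolic regularization by hyperbolic relaxation, so the only direct $L^2$ dissipation available from the weighted relative entropy is the degenerate one carrying the weight $|(v^s)^{-X}_{\xi_1}|$, which vanishes away from the shock layer. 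The Poincar\'e-plus-first-order-dissipation device, combined with a careful $\tau$-uniform bookkeeping of the relaxation terms when going up in regularity, is the crucial maneuver that lets the whole scheme close with constants independent of $\tau$.
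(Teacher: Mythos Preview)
Your proposal matches the paper's architecture: weighted relative entropy with $a$-contraction and shift (Lemmas \ref{le4.5}--\ref{le0}), high-order energy estimates (Lemma \ref{legj}), and dissipation estimates closing the loop (Lemmas \ref{lehs1}--\ref{lehs3}, Corollary \ref{coro111}). Two points deserve sharpening, as taken literally they would cause trouble.

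First, the Poincar\'e step is not purely transverse. In the paper (Lemma \ref{le4.6}) one changes variables $\xi_1\mapsto y_1=(p(v_-)-p((v^s)^{-X}))/\delta\in(0,1)$ and applies Lemma \ref{poinbds} on $(0,1)\times\mathbb T^2$; the inequality requires \emph{both} the weighted longitudinal piece $\int y_1(1-y_1)|\partial_{y_1}w|^2$ (furnished by the artificially added dissipation $D(t)$) and the transverse piece $\int|\nabla_{y'}w|^2/(y_1(1-y_1))$, and the leftover mean $\bar w^2$ is absorbed by the shift contribution $-\frac{\delta}{2M}|\dot X|^2$. A transverse-only Poincar\'e would leave the $\xi_1$-profile of $p(v)-p((v^s)^{-X})$ uncontrolled and the estimate would not close.

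Second, the dissipation on $\nabla_\xi(u-(u^s)^{-X})$ is not obtained via ``$\tau^{-1}$-scaled test functions''---that would destroy $\tau$-uniformity. Instead (Lemma \ref{lehs1}) one multiplies the $\Pi_1$ and $\Pi_2$ equations directly by $\rho\nabla_\xi\Psi$ and $\rho\ddiv_\xi\Psi$: the right-hand sides yield $\mu\|\nabla_\xi\Psi\|_{L^2}^2$ and $\lambda\|\ddiv_\xi\Psi\|_{L^2}^2$, while the $\tau\rho\partial_t\Pi_i$ terms are integrated by parts in time, producing boundary terms of size $\tau\|Q_i\|_{L^2}\|\nabla_\xi\Psi\|_{L^2}$ that are harmless under \eqref{sccs}. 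With these two adjustments your sketch is the paper's proof.
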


We will give the proof of Proposition \ref{p1} in Section 4.

Now, by using Theorem \ref{th3.1} and Proposition \ref{p1}, we are able to prove Theorem \ref{th1.1}.

{\bf{Proof of Theorem \ref{th1.1}}}:  Firstly, by the construction of $\underline v, \underline {u}_1$, there exists $C_\ast>0$ such that
\begin{align}\label{3.7}
\sum\limits_{\pm}\left(\|(\underline{v}-v_{\pm},\underline{u}_1-u_{1\pm})\|
_{H^3(\mathbb{R}_{\pm}\times\mathbb{T}^2)}\right)
<C_\ast \delta.
\end{align}
Then, using Lemma \ref{pvsw} and \eqref{3.7}, we can derive that for some $C_{\ast\ast}>0$,
\begin{equation}\label{3.8}
\begin{aligned}
&\|(\underline{v}-v^s,\underline{u}-u^s)\|_{H^3}
+\sqrt{\tau}\|(\Pi_1^s, \Pi_2^s)\|_{H^3}\\
&\le
\sum\limits_{\pm}\left(\|(\underline{v}-v_{\pm},\underline{u}-u_{\pm})\|
_{H^3(\mathbb{R}_{\pm}\times\mathbb{T}^2)}\right)
+\sum\limits_{\pm}\left(\|( v^s_0-v_{\pm}, u^s_0-u_{\pm}, \sqrt{\tau}\Pi^s_{10}, \sqrt{\tau}\Pi^s_{20})\|_{H^3(\mathbb{R}_{\pm}\times\mathbb{T}^2)}\right)
\\
&<C_{\ast\ast}\sqrt{\delta}.
\end{aligned}
\end{equation}
Noting that $\delta\in(0, \delta_0)$ and using the smallness of $\delta$, we have
\begin{equation}\label{3.9}
\frac{\varepsilon_1}{2(C_0+1)}-C_{\ast\ast}\sqrt{\delta}
-C_{\ast}\delta>0.
\end{equation}
Then, based on the above positive constants, we define $\varepsilon_0$ as follows
\[
\varepsilon_0:=\varepsilon_\ast-C_\ast\delta,\qquad
\varepsilon_\ast:=\frac{\varepsilon_1}{2(C_0+1)}-C_{\ast\ast}\sqrt{\delta},
\]
where we can choose $\varepsilon_0$ independent of $\delta_0$. For example, by choosing $\delta$ sufficiently small, we can take $\varepsilon_0=\frac{\varepsilon_1}{4 (C_0+1)}$.

Now, by using the assumption \eqref{1.10} in Theorem \ref{th1.1} and \eqref{3.7}, we can get
\begin{equation}\label{3.10}
\begin{aligned}
&\|(v_0-\underline{v},u_0-\underline{u})\|_{H^3}
+\sqrt{\tau}\|(\Pi_{10}, \Pi_{20})\|_{H^3}\\
&\le\sum\limits_{\pm}\left(\|(v_0-v_{\pm}, u_0-u_{\pm}, \sqrt{\tau}\Pi_{10}, \sqrt{\tau}\Pi_{10})\|
_{H^3(\mathbb{R}_{\pm}\times\mathbb{T}^2)}\right)
+\sum\limits_{\pm}\left(\|(\underline{v}-v_{\pm},\underline{u}-u_{\pm})\|
_{H^3(\mathbb{R}_{\pm}\times\mathbb{T}^2)}\right)
\\
&<\varepsilon_0+C_{\ast}\delta=\varepsilon_\ast.
\end{aligned}
\end{equation}
Especially, by applying Sobolev's embedding theorem, we obtain
\[
\|v_0-\underline{v}\|_{L^\infty}\le C\varepsilon_\ast,
\]
and subsequently, using the smallness of $\varepsilon_\ast$, we can derive that
\begin{equation}\label{3.11}
\frac{v_-}{2}<v_0(x)<2v_+,\qquad \forall x\in\Omega.
\end{equation}
On the other hand, we have
\begin{equation}\label{3.12}
0<\varepsilon_\ast<\frac{\varepsilon_1}{2}.
\end{equation}
Therefore, according Theorem \ref{th3.1} with \eqref{3.10}, \eqref{3.11} and \eqref{3.12}, we obtain that the system \eqref{1.6} with initial condition \eqref{1.13} has a unique solution $(v, u, \Pi_1, \Pi_2)$ on $[0,T_0]$ such that
\begin{equation}\nonumber
\begin{aligned}
(v-\underline{v}, u-\underline{u}, \Pi_1, \Pi_2)\in C([0,T_0];H^3),
\end{aligned}
\end{equation}
\begin{equation}\label{3.14}
\|v-\underline{v}\|_{L^{\infty}(0,T_0;H^3)}+\|u-\underline{u}\|_{L^{\infty}(0,T_0;H^3)}
+\sqrt{\tau}\|(\Pi_1, \Pi_2)\|_{L^{\infty}(0,T_0;H^3)}\leq \frac{\varepsilon_1}{2},
\end{equation}
and
\begin{equation}\nonumber
\frac{v_-}{3}\leq v(t,x)\leq 3v_+,\qquad \forall(t,x)\in[0,T_0]\times\Omega.
\end{equation}
In addition, using Lemma \ref{pvsw}, we have
\begin{align*}
&\int_0^\infty |v^s(x_1-\sigma t-X(t))-v_+|^2\dif x_1\\
&\le \int_0^\infty |v^s(x_1-\sigma t)-v_+|^2\dif x_1
+\int_{-X(t)}^0 |v^s(x_1-\sigma t)-v_+|^2\dif x_1\\
&\le C\delta(1+|X(t)|).
\end{align*}
Similarly, we can get
$$
\|(v^s)^{-X}-v_-\|_{L^2(\mathbb{R}_-)}^2\le C\delta(1+|X(t)|),
$$
and
$$
 \|(u^s)^{-X}-u_-\|_{L^2(\mathbb{R}_-)}^2\le C\delta(1+|X(t)|), \, \|(u^s)^{-X}-u_m\|_{L^2(\mathbb{R}_+)}^2\le C\delta(1+|X(t)|).
$$
Hence, combining the above estimates and using \eqref{3.7}, one get
\begin{align*}
&\|(\underline{v}-(v^s)^{-X},\underline{u}-(u^s)^{-X})\|_{H^3}
+\sqrt{\tau}\|((\Pi_1^s)^{-X}, (\Pi_2^s)^{-X})\|_{H^3}\\
&\le
\sum\limits_{\pm}\left(\|((v^s)^{-X}-v_{\pm},(u^s)^{-X}-u_{\pm},
\sqrt{\tau}(\Pi_1^s)^{-X}, \sqrt{\tau}(\Pi_2^s)^{-X})\|_{H^3(\mathbb{R}_{\pm}\times\mathbb{T}^2)}\right)
\\
&\quad+\sum\limits_{\pm}\left(\|(\underline{v}-v_{\pm},\underline{u}-u_{\pm})\|
_{H^3(\mathbb{R}_{\pm}\times\mathbb{T}^2)}\right)\\
&<C\sqrt{\delta}(1+\sqrt{|X(t)|}).
\end{align*}
Then, using Lemma \ref{le2.2}, we can get
\begin{equation}\nonumber
\|(\underline{v}-(v^s)^{-X},\underline{u}-(u^s)^{-X})\|_{H^3}
+\sqrt{\tau}\|((\Pi_1^s)^{-X}, (\Pi_2^s)^{-X})\|_{H^3}
<C\sqrt{\delta_0}(1+\sqrt{t}).
\end{equation}
Next, choosing $0<T_1<T_0$ small enough such that $C\sqrt{\delta_0}(1+\sqrt{t})<\frac{\varepsilon_1}{2}$, we have
\begin{equation}\label{3.17}
\|(\underline{v}-(v^s)^{-X},\underline{u}-(u^s)^{-X})\|_{L^\infty(0, T_1; H^3)}
+\sqrt{\tau}\|((\Pi_1^s)^{-X}, (\Pi_2^s)^{-X})\|_{L^\infty(0, T_1; H^3)}<\frac{\varepsilon_1}{2}.
\end{equation}
Combining \eqref{3.14} and \eqref{3.17}, we obtain that
\begin{equation}\label{3.18}
\|(v-(v^s)^{-X},u-(u^s)^{-X})\|_{L^\infty(0, T_1; H^3)}
+\sqrt{\tau}\|((\Pi_1^s)^{-X}, (\Pi_2^s)^{-X})\|_{L^\infty(0, T_1; H^3)}
<\varepsilon_1.
\end{equation}
Additionally, since the shifts $X(t)$ is Lipschitz continuous and using \eqref{3.18}, we can get
\begin{equation}\nonumber
\begin{aligned}
(v-(v^s)^{-X}, u- (u^s)^{-X}, \Pi_1- (\Pi_1^s)^{-X}, \Pi_2- (\Pi_2^s)^{-X})\in C([0,T_1];H^3),
\end{aligned}
\end{equation}
Now, we consider the maximal existence time defined as follows
\[
T_m:=\sup\left\{t>0\Big|\sup\limits_{[0, t]}
 \|(v-(v^s)^{-X}, u-(u^s)^{-X}, \sqrt{\tau}(\Pi_1-(\Pi^s_1)^{-X}), \sqrt{\tau}(\Pi_2-(\Pi^s_2)^{-X}))\|_{H^3}\le\varepsilon_1\right\}.
\]
If $T_m<\infty$, according the continuation argument, we can obtain that
\begin{equation}\label{3.20}
\sup\limits_{[0, T_m]} \|(v-(v^s)^{-X}, u-(u^s)^{-X}, \sqrt{\tau}(\Pi_1-(\Pi^s_1)^{-X}), \sqrt{\tau}(\Pi_2-(\Pi^s_2)^{-X}))\|_{H^3}=\varepsilon_1.
\end{equation}
However, by \eqref{3.8}, \eqref{3.9} and \eqref{3.10}, we can get
\begin{equation}\nonumber
\|(v_0-v^s_0,u_0-u^s_0)\|_{H^3}
+\sqrt\tau\|(\Pi_{10}-\Pi_{10}^s, \Pi_{20}-\Pi_{20}^s)\|_{H^3}
\le \frac{\varepsilon_1}{2(C_0+1)}.
\end{equation}
Next, based on the Proposition \ref{p1} and smallness of $\delta_0$, it can be deduced that
\begin{equation}\nonumber
\begin{aligned}
&\sup\limits_{[0, T_m]} \|(v-(v^s)^{-X}, u-(u^s)^{-X}, \sqrt{\tau}(\Pi_1-(\Pi^s_1)^{-X}), \sqrt{\tau}(\Pi_2-(\Pi^s_2)^{-X}))\|_{H^3}\\
&\le C_0\frac{\varepsilon_1}{2(C_0+1)}
\le \frac{\varepsilon_1}{2},
\end{aligned}
\end{equation}
which contradicts \eqref{3.20}.

Hence, we conclude that $T_m=\infty$ and together with Proposition \ref{p1}, we can derive that
\begin{equation}\label{3.23}
\begin{aligned}
&\sup\limits_{t\in[0, \infty)}\|(v-(v^s)^{-X}, u-(u^s)^{-X}, \sqrt{\tau}(\Pi_1-(\Pi^s_1)^{-X}), \sqrt{\tau}(\Pi_2-(\Pi^s_2)^{-X}))\|_{H^3}\\
&\quad+\int_0^{\infty}\|\sqrt{|(v^s)^{-X}_{\xi_1}|}|v-(v^s)^{-X}|\|_{L^2}^2\dif t
+\int_0^{\infty}\|(\Pi_1-(\Pi_1^s)^{-X}, \Pi_2-(\Pi_2^s)^{-X})\|^2_{H^3}\dif t
\\
&\quad+\delta\int_0^{\infty}|\dot X(t)|^2\dif t
+\int_0^{\infty}\|\left(\nabla_\xi(v-(v^s)^{-X}),\nabla_\xi (u-(u^s)^{-X})\right)\|^2_{H^2}\dif t
\\
&\leq C_0\left(\|(v_0-v^s, u_0-u^s,
\sqrt{\tau}(\Pi_{10}-\Pi_{1}^s), \sqrt{\tau}(\Pi_{20}-\Pi_{2}^s))\|^2_{H^3}\right).
\end{aligned}
\end{equation}

In addition, using system \eqref{sys1} below and \eqref{3.23}, we can get
$$
\int_0^{\infty}\left\|\nabla_{x}(v-(v^s)^{-X}, u-(u^s)^{-X}, \Pi_1-(\Pi_1^s)^{-X}, \Pi_2-(\Pi_2^s)^{-X})\right\|_{L^2}^2\dif t \le C
$$
and
$$\int_0^{\infty} \Big|\frac{\dif}{\dif t}\left\|\nabla_{x}(v-(v^s)^{-X}, u-(u^s)^{-X}, \sqrt\tau(\Pi_1-(\Pi_1^s)^{-X}), \sqrt\tau(\Pi_2-(\Pi_2^s)^{-X}))\right\|_{L^2}^2 \Big |\dif t \le C.$$
Thus, combining the above estimates and using \eqref{sccs}, we get
\begin{equation}\label{3.24}
\lim\limits_{t\rightarrow\infty}\left\|\nabla_{x}(v-(v^s)^{-X}, u-(u^s)^{-X}, \sqrt\tau(\Pi_1-(\Pi_1^s)^{-X}), \sqrt\tau(\Pi_2-(\Pi_2^s)^{-X}))\right\|_{L^2}^2=0.
\end{equation}
On the other hand, using \eqref{3.23} again, we obtain that
\begin{equation}\label{3.25}
\left\|(v-(v^s)^{-X}, u-(u^s)^{-X}, \Pi_1-(\Pi_1^s)^{-X}, \Pi_2-(\Pi_2^s)^{-X})\right\|_{L^2}^2\le C,
\end{equation}
and
\begin{equation}\label{3.26}
\left\|\nabla_{xx}(v-(v^s)^{-X}, u-(u^s)^{-X}, \Pi_1-(\Pi_1^s)^{-X}, \Pi_2-(\Pi_2^s)^{-X})\right\|_{L^2}^2\le C.
\end{equation}
Therefore, combining \eqref{3.24}-\eqref{3.26} and applying \eqref{sccs} and the Gagliardo-Nirenberg interpolation inequality defined in Lemma \ref{czbds1}, we derive that
\begin{align*}
\lim\limits_{t\rightarrow\infty}\left\|(v-(v^s)^{-X}, u-(u^s)^{-X}, \sqrt\tau(\Pi_1-(\Pi_1^s)^{-X}), \sqrt\tau(\Pi_2-(\Pi_2^s)^{-X}))\right\|_{L^\infty}=0.
\end{align*}
Furthermore, using the definition of shift $X(t)$ (see \eqref{shiftx}), we have
\begin{equation}\nonumber
|\dot{X}(t)|\leq C\left\|v-(v^s)^{-X}\right\|_{L^{\infty}}\rightarrow0\quad as \quad t\rightarrow\infty.
\end{equation}

Thus, the proof of Theorem \ref{th1.1} is completed.


\section{Proof of proposition \ref{p1}}
In this section, we establish the a priori estimates  of local solutions and thus give a proof of Proposition \ref{p1}.

First of all, combining systems \eqref{3.1} and \eqref{2.3}, we derive the perturbation equations for error term $\left(v-(v^s)^{-X}, u-(u^s)^{-X},\Pi_1-(\Pi_1^s)^{-X}, \Pi_2-(\Pi_2^s)^{-X}\right)$ as follows
\begin{equation}\label{sys1}
\begin{cases}
\begin{aligned}
&\rho\partial_t\left(v-(v^s)^{-X}\right)-\sigma\rho\partial_{\xi_1}\left(v-(v^s)^{-X}\right)
        +\rho u\cdot\nabla_{\xi}\left(v-(v^s)^{-X}\right)-\rho\dot{X}(t)\partial_{\xi_1}(v^s)^{-X}
        +F\partial_{\xi_1}(v^s)^{-X}\\
 &\qquad=\ddiv_{\xi}\left(u-(u^s)^{-X}\right),\\
&\rho\partial_t\left(u-(u^s)^{-X}\right)-\sigma\rho\partial_{\xi_1}\left(u-(u^s)^{-X}\right)
        +\rho u\cdot\nabla_{\xi}\left(u-(u^s)^{-X}\right)-\rho\dot{X}(t)\partial_{\xi_1}(u^s)^{-X} +F\partial_{\xi_1}(u^s)^{-X}\\
&\qquad  +\nabla_{\xi}\left(p(v)-p((v^s)^{-X})\right)
               =\ddiv_{\xi}\left(\Pi_1-(\Pi^s)_1^{-X}\right)+\nabla_{\xi}\left(\Pi_2-(\Pi_2^s)^{-X}\right),\\
&\tau\rho\partial_t\left(\Pi_1-(\Pi_1^s)^{-X}\right)-\tau\sigma\rho\partial_{\xi_1}\left(\Pi_1-(\Pi_1^s)^{-X}\right)
+\tau\rho u\cdot\nabla_{\xi} \left(\Pi_1-(\Pi_1^s)^{-X}\right)-\tau\rho\dot{X}(t)\partial_{\xi_1}(\Pi_1^s)^{-X}\\
&\qquad +\tau F\partial_{\xi_1}(\Pi_1^s)^{-X}+\Pi_1-(\Pi_1^s)^{-X}\\
 &  =\mu\left(\nabla_{\xi}\left(u-(u^s)^{-X}\right)+\left(\nabla_{\xi}\left(u-(u^s)^{-X}\right)\right)^T
   -\frac{2}{3}\ddiv_{\xi}\left(u-(u^s)^{-X}\right) \mathrm{I}_3\right),\\
&\tau\rho\partial_t\left(\Pi_2-(\Pi_2^s)^{-X}\right)-\tau\sigma\rho\partial_{\xi_1}\left(\Pi_2-(\Pi_2^s)^{-X}\right)
+\tau\rho u\cdot\nabla_{\xi} \left(\Pi_2-(\Pi_2^s)^{-X}\right)-\tau\rho\dot{X}(t)\partial_{\xi_1}(\Pi_2^s)^{-X}\\
&+\tau F\partial_{\xi_1}(\Pi_2^s)^{-X}+\Pi_2-(\Pi_2^s)^{-X}
=\lambda\ddiv_{\xi} \left(u-(u^s)^{-X}\right),
\end{aligned}
\end{cases}
\end{equation}
where
\begin{equation}\label{wcxf}
\begin{aligned}
F&=-\sigma\left(\rho-(\rho^s)^{-X}\right)+\rho u_1-(\rho^su_1^s)^{-X}\\
 &=-\frac{\sigma_\ast}{(\rho^s)^{-X}}\left(\rho-(\rho^s)^{-X}\right)+\rho\left(u_1-(u_1^s)^{-X}\right)\\
 &=\frac{\sigma_\ast}{v}\left(v-(v^s)^{-X}\right)+\frac{1}{v}\left(u_1-(u_1^s)^{-X}\right),
\end{aligned}
\end{equation}
by using \eqref{2.5}.

To establish the a priori estimates for Proposition \ref{p1}, we proceed by proving the lower-order, higher-order and dissipation estimates in sequence. Prior to demonstrating the lower-order estimates, it is necessary to develop several preparatory lemmas. Among these, we first state a 3-D Poincar\'e type inequality, the proof if which is provided in \cite{WWS}.

\begin{lemma}\label{poinbds}
For any $f$: $[0, 1]\times\mathbb{T}^2\rightarrow\mathbb{R}$ satisfying
\[
\int_{\mathbb{T}^2}\int_0^1\left(y_1(1-y_1)|\partial_{y_1}f|^2
  +\frac{|\nabla_{y^{\prime}}f|^2}{y_1(1-y_1)}\right)\dif y_1\dif y^{\prime}<\infty,
\]
it holds
\begin{equation}\label{czbds}
\int_{\mathbb{T}^2}\int_0^1|f-\bar{f}|^2\dif y_1\dif y^{\prime}\le
\frac{1}{2}\int_{\mathbb{T}^2}\int_0^1y_1(1-y_1)|\partial_{y_1}f|^2\dif y_1\dif y^{\prime}
+\frac{1}{16\pi^2}\int_{\mathbb{T}^2}\int_0^1\frac{|\nabla_{y^{\prime}}f|^2}{y_1(1-y_1)}\dif y_1\dif y^{\prime},
\end{equation}
where $\bar{f}=\int_{\mathbb{T}^2}\int_0^1f\dif y_1\dif y^{\prime}$ and $y^{\prime}=(y_2, y_3)$.
\end{lemma}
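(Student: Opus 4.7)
\textbf{Proof proposal for Lemma \ref{poinbds}.} The plan is to reduce the three-dimensional weighted Poincar\'e inequality to a family of one-dimensional estimates via Fourier decomposition in the periodic variables $y'=(y_2,y_3)\in\mathbb{T}^2$. Writing $f(y_1,y')=\sum_{k\in\mathbb{Z}^2}f_k(y_1)e^{2\pi i k\cdot y'}$ with $f_k(y_1):=\int_{\mathbb{T}^2}f(y_1,y')e^{-2\pi i k\cdot y'}\,\dif y'$, the mean satisfies $\bar f=\int_0^1 f_0(y_1)\,\dif y_1$, and Parseval's identity splits all three quantities in \eqref{czbds} mode by mode. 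In particular, using $\int_{\mathbb{T}^2}|f-\bar f|^2\,\dif y'=\sum_k|f_k|^2-|f_0|^2+|f_0-\bar f|^2$ after integration in $y_1$, the left-hand side becomes
\[
\int_0^1 |f_0-\bar f|^2\,\dif y_1+\sum_{k\neq 0}\int_0^1 |f_k|^2\,\dif y_1,
\]
while on the right-hand side, invoking $|\nabla_{y'}e^{2\pi i k\cdot y'}|^2=4\pi^2|k|^2$, the two integrals equal $\tfrac12\sum_k\int_0^1 y_1(1-y_1)|f_k'|^2\,\dif y_1$ and $\tfrac14\sum_{k\neq 0}|k|^2\int_0^1\frac{|f_k|^2}{y_1(1-y_1)}\,\dif y_1$ respectively.

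It therefore suffices to establish two scalar bounds. First, for the zero mode $f_0$, which has mean $\bar f$ on $[0,1]$, I would prove the one-dimensional weighted Poincar\'e inequality
\[
\int_0^1|f_0-\bar f|^2\,\dif y_1\le\tfrac12\int_0^1 y_1(1-y_1)|f_0'|^2\,\dif y_1.
\]
This is the reciprocal-eigenvalue inequality for the Jacobi operator $Lg:=-\partial_{y_1}\!\bigl(y_1(1-y_1)\partial_{y_1}g\bigr)$ on $L^2(0,1)$ with natural boundary conditions: its spectrum is $\{n(n+1):n\ge 0\}$, the eigenfunctions are the shifted Legendre polynomials, and the first non-zero eigenvalue is $\lambda_1=2$, which yields exactly the constant $\tfrac12$. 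One can either invoke this directly or give a self-contained proof by expanding $f_0-\bar f$ in the Legendre basis; alternatively, one may split $[0,1]=[0,\tfrac12]\cup[\tfrac12,1]$ and apply a weighted Hardy inequality on each half-interval.

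Second, for every non-zero mode $k\in\mathbb{Z}^2\setminus\{0\}$, the elementary bounds $|k|^2\ge 1$ and $y_1(1-y_1)\le\tfrac14$ on $[0,1]$ combine into the pointwise estimate $|f_k|^2\le \tfrac{|k|^2}{4}\cdot\tfrac{|f_k|^2}{y_1(1-y_1)}$, which after integration in $y_1$ gives
\[
\int_0^1|f_k|^2\,\dif y_1\le\tfrac{|k|^2}{4}\int_0^1\frac{|f_k|^2}{y_1(1-y_1)}\,\dif y_1.
\]
Summing the zero-mode bound and the non-zero-mode bounds and discarding the non-negative contribution $\tfrac12\sum_{k\neq 0}\int_0^1 y_1(1-y_1)|f_k'|^2\,\dif y_1$ on the right-hand side recovers \eqref{czbds}.

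The hard part is the sharp 1D weighted Poincar\'e inequality for the Jacobi operator; the Fourier reduction and the non-zero-mode estimate are routine. In the context of the present paper one may simply quote the Jacobi spectral fact, since the inequality is stated as a citation from \cite{WWS} and is used as a black-box tool inside the relative entropy argument in Proposition \ref{p1}.
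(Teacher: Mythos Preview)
The paper does not supply its own proof of this lemma; it merely cites \cite{WWS} and uses the inequality as a black box. Your Fourier-in-$y'$ decomposition together with the Jacobi/Legendre spectral gap for the zero mode and the pointwise bound $y_1(1-y_1)\le\tfrac14$, $|k|^2\ge 1$ for the nonzero modes is correct and is precisely the argument given in \cite{WWS}, so your proposal matches the cited proof.
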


Next, we state a 3-D Gagliardo-Nirenberg interpolation inequality. The proof can be found in \cite{LWWR, WW1}.
\begin{lemma}\label{czbds1}
It holds for $g(x)\in H^2(\Omega)$ with $x=(x_1, x_2, x_3)\in\Omega$ that
\begin{equation}
\|g\|_{L^{\infty}}\le \sqrt{2}\|g\|_{L^{2}}^{\frac{1}{2}}\|\partial_{x_1}g\|_{L^{2}}^{\frac{1}{2}}
+C\|\nabla_xg\|_{L^{2}}^{\frac{1}{2}}\|\nabla_x^2g\|_{L^{2}}^{\frac{1}{2}},
\end{equation}
where $C>0$ is a positive constant.
\end{lemma}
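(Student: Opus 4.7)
My plan is to split $g$ into its average over the torus slices and the fluctuation, and bound each contribution separately. Set $\bar g(x_1) := \int_{\mathbb{T}^2} g(x_1,x')\,\dif x'$ (normalizing so that $|\mathbb{T}^2|=1$) and $\tilde g(x_1,x') := g(x_1,x') - \bar g(x_1)$, so that $\int_{\mathbb{T}^2}\tilde g(x_1,x')\,\dif x' = 0$ for every $x_1$ and $|g|\le|\bar g|+|\tilde g|$ pointwise; it therefore suffices to bound $\|\bar g\|_{L^\infty(\mathbb R)}$ and $\|\tilde g\|_{L^\infty(\Omega)}$ separately and match them to the two terms on the right-hand side. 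For the mean $\bar g$, which depends only on $x_1\in\mathbb R$, the identity $\bar g(x_1)^2 = 2\int_{-\infty}^{x_1}\bar g(y_1)\bar g'(y_1)\,\dif y_1$ combined with Cauchy--Schwarz in $L^2(\mathbb R)$ yields the sharp one-dimensional Sobolev bound $\|\bar g\|_{L^\infty(\mathbb R)}^2\le 2\|\bar g\|_{L^2(\mathbb R)}\|\bar g'\|_{L^2(\mathbb R)}$, and Jensen's inequality (using $|\mathbb{T}^2|=1$) gives $\|\bar g\|_{L^2(\mathbb R)}\le\|g\|_{L^2(\Omega)}$ and $\|\bar g'\|_{L^2(\mathbb R)}\le\|\partial_{x_1}g\|_{L^2(\Omega)}$, producing the first term $\sqrt{2}\|g\|_{L^2}^{1/2}\|\partial_{x_1}g\|_{L^2}^{1/2}$ with its sharp constant.

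For the fluctuation $\tilde g$, the zero-mean condition on every torus slice forces its 2D Fourier expansion $\tilde g(x_1,x') = \sum_{k\in\mathbb{Z}^2\setminus\{0\}}\hat g_k(x_1)e^{2\pi i k\cdot x'}$ to involve only modes with $|k|\ge 1$. This spectral gap in the compact directions allows the classical three-dimensional Gagliardo--Nirenberg inequality $\|f\|_{L^\infty(\mathbb R^3)}\le C\|\nabla f\|_{L^2}^{1/2}\|\nabla^2 f\|_{L^2}^{1/2}$ to be transferred to $\tilde g$ on $\Omega=\mathbb R\times\mathbb T^2$. I would derive this via a Littlewood--Paley high--low splitting in the joint frequency variable $(\xi_1,k)\in\mathbb R\times\mathbb Z^2$: for any $R\ge 1$, the low-frequency piece $|\xi_1|^2+|k|^2\le R^2$ is bounded in $L^\infty$ by $CR^{1/2}\|\nabla_x\tilde g\|_{L^2}$ via Cauchy--Schwarz on the Fourier inversion formula with weights $|\xi|^{-1}$ and $|\xi|$---the weighted measure integral $\int_{|\xi|\le R,\,|k|\ge 1}|\xi|^{-2}\,\dif\xi$ being finite precisely because the spectral gap excludes the origin---while the high-frequency piece is bounded by $CR^{-1/2}\|\nabla_x^2\tilde g\|_{L^2}$ analogously with weights $|\xi|^{-2}$ and $|\xi|^2$. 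Optimizing in $R\ge 1$ yields $\|\tilde g\|_{L^\infty(\Omega)}\le C\|\nabla_x\tilde g\|_{L^2}^{1/2}\|\nabla_x^2\tilde g\|_{L^2}^{1/2}$; since $\nabla_x\tilde g$ and $\nabla_x^2\tilde g$ differ from $\nabla_x g$ and $\nabla_x^2 g$ only by the $x'$-independent corrections $\bar g'(x_1),\bar g''(x_1)$, themselves dominated by $\|\partial_{x_1}g\|_{L^2}$ and $\|\partial_{x_1}^2 g\|_{L^2}$, this produces the second term $C\|\nabla_x g\|_{L^2}^{1/2}\|\nabla_x^2 g\|_{L^2}^{1/2}$.

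The main technical obstacle is that the three-dimensional Gagliardo--Nirenberg estimate fails for generic $g$ on the mixed geometry $\mathbb R\times\mathbb T^2$ (consider an $x_1$-dependent function constant in $x'$: the Gagliardo--Nirenberg scaling is destroyed by the compact torus factor), so the mean/fluctuation decomposition is indispensable---the spectral gap from the zero torus-mean on $\tilde g$ plays the role of the low-frequency decay available on $\mathbb R^3$. Keeping the Littlewood--Paley cutoff $R\ge 1$ so that the spectral-gap region lies entirely inside the low-mode cone avoids any logarithmic loss in the discrete $k$-sum, allowing the classical $\mathbb R^3$ optimization to go through unchanged and delivering both exponents $1/2$ on $\|\nabla g\|_{L^2}$ and $\|\nabla^2 g\|_{L^2}$ without spilling into third derivatives.
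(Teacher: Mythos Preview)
The paper does not actually prove this lemma; it merely states the inequality and refers the reader to \cite{LWWR, WW1} for the proof. So there is no in-paper argument to compare against.

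Your approach is correct and is in fact the standard route for such mixed geometries: the mean/fluctuation splitting $g=\bar g+\tilde g$ over the torus cross-sections is exactly what separates the anisotropic one-dimensional behaviour (yielding the sharp $\sqrt{2}\|g\|_{L^2}^{1/2}\|\partial_{x_1}g\|_{L^2}^{1/2}$ term) from the genuinely three-dimensional Gagliardo--Nirenberg behaviour on the zero-mean part. Your Fourier argument for $\tilde g$ is sound; the spectral gap $|k|\ge 1$ makes $\sum_{k\ne 0}\int_{\{|\xi|\le R\}}|\xi|^{-2}\,\dif\xi_1\lesssim R$ and $\sum_{k\ne 0}\int_{\{|\xi|> R\}}|\xi|^{-4}\,\dif\xi_1\lesssim R^{-1}$ hold, and the final passage from $\|\nabla\tilde g\|_{L^2},\|\nabla^2\tilde g\|_{L^2}$ back to $\|\nabla g\|_{L^2},\|\nabla^2 g\|_{L^2}$ is immediate since averaging is an $L^2$ contraction (indeed orthogonal projection).

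One small point worth tightening: when you optimise $R^{1/2}\|\nabla\tilde g\|_{L^2}+R^{-1/2}\|\nabla^2\tilde g\|_{L^2}$ subject to $R\ge 1$, the unconstrained minimiser $R=\|\nabla^2\tilde g\|_{L^2}/\|\nabla\tilde g\|_{L^2}$ could in principle fall below $1$. This is harmless because the spectral gap also gives the Poincar\'e bound $\|\nabla\tilde g\|_{L^2}\le C\|\nabla^2\tilde g\|_{L^2}$ (each component of $\nabla\tilde g$ again has zero torus mean), so the ratio is bounded below and taking $R=1$ in that regime still returns $C\|\nabla\tilde g\|_{L^2}^{1/2}\|\nabla^2\tilde g\|_{L^2}^{1/2}$. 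You gesture at this with ``keeping $R\ge 1$'', but spelling it out would make the argument airtight.
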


\subsection{Estimates for weighted relative entropy}

Now, let relative entropy
$$\eta(t, \xi)=H(v|(v^s)^{-X})+\frac{|u-(u^s)^{-X}|^2}{2}
+\frac{\tau|\Pi_1-(\Pi_1^s)^{-X}|^2}{4\mu}+\frac{\tau|\Pi_2-(\Pi_2^s)^{-X}|^2}{2\lambda},$$ where $H(v|(v^s)^{-X})=H(v)-H(v^s)-H^{\prime}(v^s)(v-v^s)$ and $H(v)=\frac{v^{1-\gamma}}{\gamma-1}$.  In the following lemma, we estimates the shifted relative entropy $\eta(t, \xi)$ weighted by $a(\xi_1)$.

\begin{lemma}\label{le4.5}
We have
\begin{equation}\label{4.5}
\frac{\dif}{\dif t}\int_{\mathbb{T}^2}\int_{\mathbb{R}}a^{-X}\rho\eta(t,\xi)\dif \xi_1\dif \xi^{\prime}=\dot{X}(t)Y(t)+J^{bad}(t)-J^{good}(t),
\end{equation}
where
\begin{align*}
Y(t):&=-\int_{\mathbb{T}^2}\int_{\mathbb{R}}a^{-X}_{\xi_1}\rho\eta(t,\xi)\dif \xi_1\dif \xi^{\prime}
-\int_{\mathbb{T}^2}\int_{\mathbb{R}}a^{-X}\rho p^{\prime}((v^s)^{-X})(v^s)^{-X}_{\xi_1}(v-(v^s)^{-X})\dif \xi_1\dif \xi^{\prime}\\
&+\int_{\mathbb{T}^2}\int_{\mathbb{R}}a^{-X}\rho \left(u-(u^s)^{-X}\right)(u^s)^{-X}_{\xi_1}\dif \xi_1\dif \xi^{\prime}\\
 &+\int_{\mathbb{T}^2}\int_{\mathbb{R}}a^{-X} \frac{\tau\rho}{2\mu}\left(\Pi_1-(\Pi_1^s)^{-X}\right):(\Pi_1^s)^{-X}_{\xi_1}\dif \xi_1\dif \xi^{\prime}\\
&+\int_{\mathbb{T}^2}\int_{\mathbb{R}}a^{-X} \frac{\tau\rho}{\lambda}\left(\Pi_2-(\Pi_2^s)^{-X}\right)(\Pi_2^s)^{-X}_{\xi_1}\dif \xi_1\dif \xi^{\prime},
\end{align*}
\begin{align*}
J^{bad}(t):=&\frac{1}{2\sigma_\ast}\int_{\mathbb{T}^2}\int_{\mathbb{R}}a^{-X}_{\xi_1}
|p(v)-p((v^s)^{-X})|^2\dif \xi_1\dif \xi^{\prime}
+\sigma_\ast\int_{\mathbb{T}^2}\int_{\mathbb{R}}a^{-X}p(v|(v^s)^{-X})(v^s)^{-X}_{\xi_1}\dif \xi_1\dif \xi^{\prime}\\
&+\frac{\delta}{\nu}\int_{\mathbb{T}^2}\int_{\mathbb{R}}\frac{a^{-X}}{\sigma_\ast v}a^{-X}_{\xi_1}\left(u_1-(u_1^s)^{-X}\right)^2\dif \xi_1\dif \xi^{\prime}\\
&-\int_{\mathbb{T}^2}\int_{\mathbb{R}}a^{-X}_{\xi_1}\left(u_1-(u_1^s)^{-X}\right)\left(\Pi_2-(\Pi_2^s)^{-X}\right)\dif \xi_1\dif \xi^{\prime}\\
&-\int_{\mathbb{T}^2}\int_{\mathbb{R}}a^{-X}_{\xi_1}\left(\left(u_1-(u^s_1)^{-X}\right)
\left(\Pi_{11}-(\Pi_{11}^s)^{-X}\right)+u_2\Pi_{21}+u_3\Pi_{31}
\right)\dif \xi_1\dif \xi^{\prime}\\
&+\int_{\mathbb{T}^2}\int_{\mathbb{R}}
a^{-X}\frac{1}{v}\left((\Pi_{11}^s)_{\xi_1}+(\Pi_{2}^s)_{\xi_1}\right)(v-(v^s)^{-X})\left(u_1-(u_1^s)^{-X}\right)
\dif \xi_1\dif \xi^{\prime}\\
&+\int_{\mathbb{T}^2}\int_{\mathbb{R}}a^{-X}\frac{1}{v\sigma_\ast}
\left((\Pi_{11}^s)_{\xi_1}+(\Pi_{2}^s)_{\xi_1}\right)\left(u_1-(u_1^s)^{-X}\right)^2
\dif \xi_1\dif \xi^{\prime}
\\
&+\int_{\mathbb{T}^2}\int_{\mathbb{R}}a^{-X}_{\xi_1}F\eta(t,\xi)\dif \xi_1\dif \xi^{\prime}
-\int_{\mathbb{T}^2}\int_{\mathbb{R}}a^{-X}\frac{\tau}{2\mu}F\left(\Pi_1-(\Pi_1^s)^{-X}\right) : (\Pi_1^s)^{-X}_{\xi_1}\dif \xi_1\dif \xi^{\prime}\\
&-\int_{\mathbb{T}^2}\int_{\mathbb{R}}a^{-X}\frac{\tau}{\lambda}F\left(\Pi_2-(\Pi_2^s)^{-X}\right)
(\Pi_2^s)^{-X}_{\xi_1}\dif \xi_1\dif \xi^{\prime}
:=\sum\limits_{i=1}^{10}B_i(t),
\end{align*}
and
\begin{align*}
J^{good}(t)&:=\sigma_\ast\int_{\mathbb{T}^2}\int_{\mathbb{R}}a^{-X}_{\xi_1} H(v|(v^s)^{-X})\dif \xi_1\dif \xi^{\prime}
+\sigma_\ast\int_{\mathbb{T}^2}\int_{\mathbb{R}}a^{-X}_{\xi_1} \frac{u_2^2+u_3^2}{2}\dif \xi_1\dif \xi^{\prime}\\
&+\frac{\sigma_\ast}{2}\int_{\mathbb{T}^2}\int_{\mathbb{R}}a^{-X}_{\xi_1}
\Big|u_1-(u_1^s)^{-X}-\frac{p(v)-p((v^s)^{-X})}{\sigma_\ast}\Big|^2\dif \xi_1\dif \xi^{\prime}\\
&-\int_{\mathbb{T}^2}\int_{\mathbb{R}}a^{-X}\frac{\sigma_\ast}{v}p^{\prime}((v^s)^{-X})(v^s)^{-X}_{\xi_1}(v-(v^s)^{-X})^2\dif \xi_1\dif \xi^{\prime}\\
&+\sigma_\ast\int_{\mathbb{T}^2}\int_{\mathbb{R}}a^{-X}_{\xi_1} \frac{\tau|\Pi_1-(\Pi_1^s)^{-X}|^2}{4\mu}\dif \xi_1\dif \xi^{\prime}
+\int_{\mathbb{T}^2}\int_{\mathbb{R}}a^{-X}\frac{|\Pi_1-(\Pi_1^s)^{-X}|^2}{2\mu}\dif \xi_1\dif \xi^{\prime}\\
&+\sigma_\ast\int_{\mathbb{T}^2}\int_{\mathbb{R}}a^{-X}_{\xi_1} \frac{\tau|\Pi_2-(\Pi_2^s)^{-X}|^2}{2\lambda}\dif \xi_1\dif \xi^{\prime}
+\int_{\mathbb{T}^2}\int_{\mathbb{R}}a^{-X}\frac{|\Pi_2-(\Pi_2^s)^{-X}|^2}{\lambda}\dif \xi_1\dif \xi^{\prime}\\
&:=\sum\limits_{i=1}^{8}G_i(t).
\end{align*}
\end{lemma}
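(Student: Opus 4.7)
The plan is to differentiate the weighted integral $\int_{\mathbb{T}^2}\int_{\mathbb{R}} a^{-X}\rho\eta\,d\xi_1 d\xi'$ directly in time and sort the resulting terms. Every factor of the form $f^{-X}(t,\xi_1)=f(\xi_1-X(t))$ contributes $-\dot X(t)\,f^{-X}_{\xi_1}$ under $\partial_t$; this accounts for the $\dot X(t)$-part coming from the weight $a^{-X}$ and from the shifted profiles embedded in $\eta$. The remaining contributions are obtained by testing the perturbation system \eqref{sys1} against the natural duality variables of the entropy: the continuity equation (for $v-(v^s)^{-X}$) against $a^{-X}\bigl(p((v^s)^{-X})-p(v)\bigr)$, since $H'(v)=-p(v)$ and hence $\partial_v H(v|(v^s)^{-X})=p((v^s)^{-X})-p(v)$; the momentum equation against $a^{-X}(u-(u^s)^{-X})$; and the two stress equations against $\frac{a^{-X}}{2\mu}(\Pi_1-(\Pi_1^s)^{-X})$ and $\frac{a^{-X}}{\lambda}(\Pi_2-(\Pi_2^s)^{-X})$, with the $\tau$-factors chosen so the relaxation term on the left of \eqref{sys1}$_{3,4}$ produces exactly the $|\Pi_i-(\Pi_i^s)^{-X}|^2$ dissipation of $G_6,G_8$.

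Summing these four pairings and combining the convective operator $\partial_t-\sigma\partial_{\xi_1}+u\cdot\nabla_\xi$ through the mass balance \eqref{sys1}$_1$, I would integrate by parts in $\xi$; the boundary contributions at $\xi_1=\pm\infty$ vanish by the exponential decay in Lemma \ref{pvsw} and the a priori assumption \eqref{3.4}. The $\dot X$ source terms $\rho\dot X(t)\partial_{\xi_1}(\cdot)^{-X}$ that appear in \eqref{sys1} together with the $\partial_t(\cdot)^{-X}=-\dot X\partial_{\xi_1}(\cdot)^{-X}$ contributions combine to give precisely the four summands of $Y(t)$. The profile-derivative terms $\nabla_\xi p((v^s)^{-X})$, $(u_1^s)^{-X}_{\xi_1}$, $(\Pi_1^s)^{-X}_{\xi_1}$, $(\Pi_2^s)^{-X}_{\xi_1}$ are then rewritten using \eqref{2.3} and the integrated shock relation \eqref{2.7}, in particular $(u_1^s)_{\xi_1}=-\sigma_\ast v^s_{\xi_1}$ and $\Pi_{11}^s+\Pi_2^s=-\sigma_\ast(u_1^s-u_{1-})+(p(v^s)-p(v_-))$; this is what produces the complete-square $G_3$ out of the pressure-velocity coupling and the scalar pressure dissipation $G_1$ and $G_4$.

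The terms which naturally have a positive sign, namely those built from $\sigma_\ast a^{-X}_{\xi_1}$ paired with a quadratic expression (recall $a^{-X}_{\xi_1}>0$ since $p(v^s)_{\xi_1}<0$ and $\nu>0$) and the intrinsic $L^2$-stress dissipation generated by the relaxation operator, are assigned to $J^{good}$ and produce $G_1,\dots,G_8$; the dissipation $G_2$ for the tangential components $u_2,u_3$ emerges after combining the $a^{-X}_{\xi_1}$-weighted kinetic energy with the divergence structure. The leftover terms split into three families: quadratic cross terms carrying $a^{-X}_{\xi_1}$ that mix stress and velocity or that come from the pressure-squared flux (yielding $B_1$--$B_5$), terms carrying $\partial_{\xi_1}(\Pi_{11}^s,\Pi_2^s)$ that emerge from the momentum profile equation after substitution of \eqref{2.7}$_2$ (yielding $B_6$--$B_7$), and the contributions of the mass-flux remainder $F$ defined in \eqref{wcxf}, which is expressed purely in terms of $v-(v^s)^{-X}$ and $u_1-(u_1^s)^{-X}$ via $\rho-(\rho^s)^{-X}=-(v-(v^s)^{-X})/(v\,(v^s)^{-X})$ (yielding $B_8$--$B_{10}$). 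Together these form $J^{bad}$.

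The main obstacle I anticipate is the bookkeeping required to isolate the complete-square $G_3$ while simultaneously producing the pressure-squared bad term $B_1=\frac{1}{2\sigma_\ast}\int a^{-X}_{\xi_1}|p(v)-p((v^s)^{-X})|^2$, since $G_3$ and $B_1$ arise from the same algebraic rearrangement $\frac{\sigma_\ast}{2}a^{-X}_{\xi_1}(u_1-(u_1^s)^{-X})^2-a^{-X}_{\xi_1}(u_1-(u_1^s)^{-X})(p(v)-p((v^s)^{-X}))$ and a wrong sign convention will break the $a$-contraction sign structure on which the subsequent control of $J^{bad}$ by $|\dot X|^2\delta$, $J^{good}$, and the Poincaré inequality of Lemma \ref{poinbds} depends. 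Once this reorganization is performed carefully, identity \eqref{4.5} follows by collecting terms and relabelling.
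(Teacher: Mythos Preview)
Your proposal is correct and follows essentially the same approach as the paper: differentiate $\int a^{-X}\rho\eta$ in time, split according to the four summands of $\eta$, use the continuity equation to handle the $\rho_t$ factor, test each equation of \eqref{sys1} against the corresponding entropy variable, integrate by parts so that the cross terms $I_2+I_3$, $I_4+I_6$, $I_5+I_7$ cancel into pure $a^{-X}_{\xi_1}$ boundary fluxes, and then perform the algebraic rearrangement using \eqref{2.6}--\eqref{2.7} and the formula for $F$ in \eqref{wcxf} to isolate the complete square $G_3$, the pressure-squared term $B_1$, and the $G_4$ contribution. The only point you leave implicit that the paper makes explicit is the treatment of the $\rho_t$ factor in $\partial_t(a^{-X}\rho\eta)$ via $\rho_t=\sigma\rho_{\xi_1}-\ddiv_\xi(\rho u)$, which is what produces the flux combination $-\sigma\rho+\rho u_1=-\sigma_\ast+F$ multiplying $a^{-X}_{\xi_1}\eta$ and hence the $B_8$ term.
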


\begin{proof}
Firstly, we have
\begin{align*}
&\frac{\dif}{\dif t}\int_{\mathbb{T}^2}\int_{\mathbb{R}}a^{-X}\rho\eta(t,\xi)\dif \xi_1\dif \xi^{\prime}\\
&=
\frac{\dif}{\dif t}\int_{\mathbb{T}^2}\int_{\mathbb{R}}a^{-X}\rho H(v|(v^s)^{-X})\dif \xi_1\dif \xi^{\prime}
+\frac{\dif}{\dif t}\int_{\mathbb{T}^2}\int_{\mathbb{R}}a^{-X}\rho\frac{|u-(u^s)^{-X}|^2}{2}
\dif \xi_1\dif \xi^{\prime}\\
&+\frac{\dif}{\dif t}\int_{\mathbb{T}^2}\int_{\mathbb{R}}a^{-X}\rho\frac{\tau|\Pi_1-(\Pi_1^s)^{-X}|^2}{4\mu}\dif \xi_1\dif \xi^{\prime}
+\frac{\dif}{\dif t}\int_{\mathbb{T}^2}\int_{\mathbb{R}}a^{-X}\rho\frac{\tau|\Pi_2-(\Pi_2^s)^{-X}|^2}{2\lambda}\dif \xi_1\dif \xi^{\prime}:=\sum\limits^4\limits_{i=1}J_i.
\end{align*}
The first term decomposes as
\begin{align*}
J_1
=&-\dot{X}(t)\int_{\mathbb{T}^2}\int_{\mathbb{R}}a^{-X}_{\xi_1}\rho H(v|(v^s)^{-X})\dif \xi_1\dif \xi^{\prime}
+\int_{\mathbb{T}^2}\int_{\mathbb{R}}a^{-X}\rho_t H(v|(v^s)^{-X})\dif \xi_1\dif \xi^{\prime}\\
&+\int_{\mathbb{T}^2}\int_{\mathbb{R}}a^{-X}\rho
\left(-p(v)v_t+p((v^s)^{-X})v_t+p^{\prime}((v^s)^{-X})(v^s)^{-X}_t(v-(v^s)^{-X})\right)\dif \xi_1\dif \xi^{\prime}.
\end{align*}
By using $\eqref{3.1}_1$, we derive
\begin{align*}
&\int_{\mathbb{T}^2}\int_{\mathbb{R}}a^{-X}\rho_t H(v|(v^s)^{-X})\dif \xi_1\dif \xi^{\prime}\\
=&\int_{\mathbb{T}^2}\int_{\mathbb{R}}a^{-X}\left(\sigma\rho_{\xi_1}-\ddiv_\xi(\rho u)\right) H(v|(v^s)^{-X})\dif \xi_1\dif \xi^{\prime}\\
=&-\sigma\int_{\mathbb{T}^2}\int_{\mathbb{R}}a^{-X}_{\xi_1}\rho H(v|(v^s)^{-X})\dif \xi_1\dif \xi^{\prime}
+\int_{\mathbb{T}^2}\int_{\mathbb{R}}a^{-X}_{\xi_1}\rho u_1 H(v|(v^s)^{-X})\dif \xi_1\dif \xi^{\prime}\\
&-\sigma\int_{\mathbb{T}^2}\int_{\mathbb{R}}a^{-X}\rho
\left(-p(v)v_{\xi_1}+p((v^s)^{-X})v_{\xi_1}+p^{\prime}((v^s)^{-X})(v^s)^{-X}_{\xi_1}(v-(v^s)^{-X})\right)\dif \xi_1\dif \xi^{\prime}\\
&+\int_{\mathbb{T}^2}\int_{\mathbb{R}}a^{-X}\rho u\cdot
\left(-p(v)\nabla_\xi v+p((v^s)^{-X})\nabla_\xi v+p^{\prime}((v^s)^{-X})(v-(v^s)^{-X})\nabla_\xi (v^s)^{-X}\right)\dif \xi_1\dif \xi^{\prime},
\end{align*}
and
\begin{align*}
&\int_{\mathbb{T}^2}\int_{\mathbb{R}}a^{-X}\rho v_t\left(-p(v)+p((v^s)^{-X})\right)\dif \xi_1\dif \xi^{\prime}\\
&=\int_{\mathbb{T}^2}\int_{\mathbb{R}}a^{-X}\left(-p(v)+p((v^s)^{-X})\right)\rho\left(\sigma v_{\xi_1}-u\cdot\nabla_\xi v\right)\dif \xi_1\dif \xi^{\prime}\\
&\quad+\int_{\mathbb{T}^2}\int_{\mathbb{R}}a^{-X}\left(-p(v)+p((v^s)^{-X})\right)\ddiv_\xi u\dif \xi_1\dif \xi^{\prime},
\end{align*}
where
\begin{align*}
&\int_{\mathbb{T}^2}\int_{\mathbb{R}}a^{-X}\left(-p(v)+p((v^s)^{-X})\right)\ddiv_\xi u\dif \xi_1\dif \xi^{\prime}\\
&=\int_{\mathbb{T}^2}\int_{\mathbb{R}}a^{-X}\left(-p(v)+p((v^s)^{-X})\right)\ddiv_\xi (u-(u^s)^{-X})\dif \xi_1\dif \xi^{\prime}\\
&\quad+\underbrace{\int_{\mathbb{T}^2}\int_{\mathbb{R}}a^{-X}\left(-p(v)+p((v^s)^{-X})\right)(u_1^s)^{-X}_{\xi_1}\dif \xi_1\dif \xi^{\prime}}_{=:I_1}.
\end{align*}
Then, noting that $p(v|(v^s)^{-X})=p(v)-p((v^s)^{-X})-p^{\prime}((v^s)^{-X})(v-(v^s)^{-X})$, we obtain
\begin{align*}
	I_1&=
	-\sigma_\ast\int_{\mathbb{T}^2}\int_{\mathbb{R}}a^{-X}\left(-p(v)+p((v^s)^{-X})\right)v^s_{\xi_1}\dif \xi_1\dif \xi^{\prime}\\
	&=\sigma_\ast\int_{\mathbb{T}^2}\int_{\mathbb{R}}a^{-X}p(v|(v^s)^{-X})(v^s)^{-X}_{\xi_1}\dif \xi_1\dif \xi^{\prime}
	+\sigma_\ast\int_{\mathbb{T}^2}\int_{\mathbb{R}}a^{-X}p^{\prime}((v^s)^{-X})(v-(v^s)^{-X})(v^s)^{-X}_{\xi_1}\dif \xi_1\dif \xi^{\prime}.
\end{align*}
In addition, using \eqref{2.6}, we get
\begin{align*}
&\int_{\mathbb{T}^2}\int_{\mathbb{R}}a^{-X}\rho p^{\prime}((v^s)^{-X})(v^s)^{-X}_t(v-(v^s)^{-X})\dif \xi_1\dif \xi^{\prime}\\
&=-\dot{X}(t)\int_{\mathbb{T}^2}\int_{\mathbb{R}}a^{-X}\rho p^{\prime}((v^s)^{-X})(v^s)^{-X}_{\xi_1}(v-(v^s)^{-X})\dif \xi_1\dif \xi^{\prime}.
\end{align*}
Thus, we conclude that
\begin{equation}\label{0j1}
\begin{aligned}
J_1
&=-\dot{X}(t)\int_{\mathbb{T}^2}\int_{\mathbb{R}}a^{-X}_{\xi_1}\rho H(v|(v^s)^{-X})\dif \xi_1\dif \xi^{\prime}
-\dot{X}(t)\int_{\mathbb{T}^2}\int_{\mathbb{R}}a^{-X}\rho p^{\prime}((v^s)^{-X})(v^s)^{-X}_{\xi_1}(v-(v^s)^{-X})\dif \xi_1\dif \xi^{\prime}\\
&\quad+\int_{\mathbb{T}^2}\int_{\mathbb{R}}a^{-X}_{\xi_1}(-\sigma_\ast+F) H(v|(v^s)^{-X})\dif \xi_1\dif \xi^{\prime}
+\sigma_\ast\int_{\mathbb{T}^2}\int_{\mathbb{R}}a^{-X}p(v|(v^s)^{-X})(v^s)^{-X}_{\xi_1}\dif \xi_1\dif \xi^{\prime}\\
&\quad+\int_{\mathbb{T}^2}\int_{\mathbb{R}}a^{-X}Fp^{\prime}((v^s)^{-X})(v-(v^s)^{-X})(v^s)^{-X}_{\xi_1}\dif \xi_1\dif \xi^{\prime}\\
&\quad\underbrace{-\int_{\mathbb{T}^2}\int_{\mathbb{R}}a^{-X}\left(p(v)-p((v^s)^{-X})\right)\ddiv_\xi (u-(u^s)^{-X})\dif \xi_1\dif \xi^{\prime}}\limits_{=:I_2},
\end{aligned}
\end{equation}
where
\begin{equation*}
	-\sigma\rho-\rho u_1
	=-\sigma(\rho^s)^{-X}+(\rho^su_1^s)^{-X}-\sigma(\rho-(\rho^s)^{-X})+(\rho u_1-(\rho^su_1^s)^{-X})
	=-\sigma_\ast+F.
\end{equation*}

For term $J_2$, we have
\begin{align*}
J_2
=&-\dot{X}(t)\int_{\mathbb{T}^2}\int_{\mathbb{R}}a^{-X}_{\xi_1}\rho \frac{|u-(u^s)^{-X}|^2}{2}\dif \xi_1\dif \xi^{\prime}
+\int_{\mathbb{T}^2}\int_{\mathbb{R}}a^{-X}\rho_t \frac{|u-(u^s)^{-X}|^2}{2}\dif \xi_1\dif \xi^{\prime}\\
&+\int_{\mathbb{T}^2}\int_{\mathbb{R}}a^{-X}\rho
\left(u-(u^s)^{-X}\right)\cdot\left(u-(u^s)^{-X}\right)_t\dif \xi_1\dif \xi^{\prime}.
\end{align*}
Following a similar procedure to that used for estimating the second term of $J_1$, we obtain
\begin{align*}
&\int_{\mathbb{T}^2}\int_{\mathbb{R}}a^{-X}\rho_t \frac{|u-(u^s)^{-X}|^2}{2}\dif \xi_1\dif \xi^{\prime}\\
&=\int_{\mathbb{T}^2}\int_{\mathbb{R}}a^{-X}_{\xi_1}\left(-\sigma\rho+\rho u_1\right) \frac{|u-(u^s)^{-X}|^2}{2}\dif \xi_1\dif \xi^{\prime}
-\sigma\int_{\mathbb{T}^2}\int_{\mathbb{R}}a^{-X}\rho \left(\frac{|u-(u^s)^{-X}|^2}{2}\right)_{\xi_1}\dif \xi_1\dif \xi^{\prime}\\
&\quad+\int_{\mathbb{T}^2}\int_{\mathbb{R}}a^{-X}\rho u\cdot\nabla_\xi \frac{|u-(u^s)^{-X}|^2}{2}\dif \xi_1\dif \xi^{\prime}.
\end{align*}
Subsequently, by using $\eqref{sys1}_2$, we derive that
\begin{align*}
&\int_{\mathbb{T}^2}\int_{\mathbb{R}}a^{-X}\rho
\left(u-(u^s)^{-X}\right)\left(u-(u^s)^{-X}\right)_t\dif \xi_1\dif \xi^{\prime}\\
&=\int_{\mathbb{T}^2}\int_{\mathbb{R}}a^{-X}
\left(u-(u^s)^{-X}\right)\cdot
\left(\sigma\rho\partial_{\xi_1}\left(u-(u^s)^{-X}\right)_{\xi_1}
 -\rho u\cdot\nabla_\xi\left(u-(u^s)^{-X}\right)+\rho\dot{X}(t)\partial_{\xi_1}(u^s)^{-X} \right.\\
        &\qquad\quad\left.-F\partial_{\xi_1}(u^s)^{-X}-\nabla_\xi\left(p(v)-p((v^s)^{-X})\right)
        +\ddiv_\xi\left(\Pi_1-(\Pi^s)^{-X}\right)+\nabla_\xi\left(\Pi_2-(\Pi_2^s)^{-X}\right)\right)\dif \xi_1\dif \xi^{\prime}.
\end{align*}
Therefore, we conclude that
\begin{equation}\label{0j2}
\begin{aligned}
J_2=&-\dot{X}(t)\int_{\mathbb{T}^2}\int_{\mathbb{R}}a^{-X}_{\xi_1}\rho \frac{|u-(u^s)^{-X}|^2}{2}\dif \xi_1\dif \xi^{\prime}
+\dot{X}(t)\int_{\mathbb{T}^2}\int_{\mathbb{R}}a^{-X}\rho \left(u-(u^s)^{-X}\right)\cdot\partial_{\xi_1}(u^s)^{-X}\dif \xi_1\dif \xi^{\prime}\\
&+\int_{\mathbb{T}^2}\int_{\mathbb{R}}a^{-X}_{\xi_1}(-\sigma_\ast+F) \frac{|u-(u^s)^{-X}|^2}{2}\dif \xi_1\dif \xi^{\prime}
-\int_{\mathbb{T}^2}\int_{\mathbb{R}}a^{-X}F\left(u-(u_1^s)^{-X}\right)(u_1^s)^{-X}_{\xi_1}\dif \xi_1\dif \xi^{\prime}\\
&+I_3+I_4+I_5,
\end{aligned}
\end{equation}
where $I_3:=-\int_{\mathbb{T}^2}\int_{\mathbb{R}}a^{-X}\left(u-(u^s)^{-X}\right)\cdot\nabla_\xi\left(p(v)-p((v^s)^{-X})\right)\dif \xi_1\dif \xi^{\prime}$, $I_4:=\int_{\mathbb{T}^2}\int_{\mathbb{R}}a^{-X}\left(u-(u^s)^{-X}\right)\cdot\ddiv_\xi\left(\Pi_1-(\Pi^s_1)^{-X}\right)\dif \xi_1\dif \xi^{\prime}$ and $I_5=\int_{\mathbb{T}^2}\int_{\mathbb{R}}a^{-X}\left(u-(u^s)^{-X}\right)\cdot\nabla_\xi\left(\Pi_2-(\Pi_2^s)^{-X}\right)\dif \xi_1\dif \xi^{\prime}$.

For $J_3$ and $J_4$, following the same procedure as for $J_2$, it holds
\begin{equation}\label{0j3}
\begin{aligned}
J_3=&-\dot{X}(t)\int_{\mathbb{T}^2}\int_{\mathbb{R}}a^{-X}_{\xi_1}\rho \frac{\tau|\Pi_1-(\Pi_1^s)^{-X}|^2}{4\mu}\dif \xi_1\dif \xi^{\prime}
+\dot{X}(t)\int_{\mathbb{T}^2}\int_{\mathbb{R}}a^{-X}\rho \frac{\tau}{2\mu}\left(\Pi_1-(\Pi_1^s)^{-X}\right):(\Pi_1^s)^{-X}_{\xi_1}\dif \xi_1\dif \xi^{\prime}\\
&+\int_{\mathbb{T}^2}\int_{\mathbb{R}}a^{-X}_{\xi_1}(-\sigma_\ast+F) \frac{\tau|\Pi_1-(\Pi_1^s)^{-X}|^2}{4\mu}\dif \xi_1\dif \xi^{\prime}
-\int_{\mathbb{T}^2}\int_{\mathbb{R}}a^{-X}\frac{\tau}{2\mu}F\left(\Pi_1-(\Pi_1^s)^{-X}\right):(\Pi_1^s)^{-X}_{\xi_1}\dif \xi_1\dif \xi^{\prime}\\
&+\underbrace{\int_{\mathbb{T}^2}\int_{\mathbb{R}}a^{-X}\frac{\left(\Pi_1-(\Pi_1^s)^{-X}\right)}{2}
:\left(\nabla_\xi\left(u-(u^s)^{-X}\right)
+\left(\nabla_\xi\left(u-(u^s)^{-X}\right)\right)^T\right)\dif \xi_1\dif \xi^{\prime}}\limits_{=:I_6}\\
&-\int_{\mathbb{T}^2}\int_{\mathbb{R}}a^{-X}\frac{|\Pi_1-(\Pi_1^s)^{-X}|^2}{2\mu}\dif \xi_1\dif \xi^{\prime},
\end{aligned}
\end{equation}
and
\begin{equation}\label{0j4}
\begin{aligned}
J_4=&-\dot{X}(t)\int_{\mathbb{T}^2}\int_{\mathbb{R}}a^{-X}_{\xi_1}\rho \frac{\tau|\Pi_2-(\Pi_2^s)^{-X}|^2}{2\lambda}\dif \xi_1\dif \xi^{\prime}
+\dot{X}(t)\int_{\mathbb{T}^2}\int_{\mathbb{R}}a^{-X}\rho \frac{\tau}{\lambda}\left(\Pi_2-(\Pi_2^s)^{-X}\right)(\Pi_2^s)^{-X}_{\xi_1}\dif \xi_1\dif \xi^{\prime}\\
&+\int_{\mathbb{T}^2}\int_{\mathbb{R}}a^{-X}_{\xi_1}(-\sigma_\ast+F) \frac{\tau|\Pi_2-(\Pi_2^s)^{-X}|^2}{2\lambda}\dif \xi_1\dif \xi^{\prime}
-\int_{\mathbb{T}^2}\int_{\mathbb{R}}a^{-X}\frac{\tau}{\lambda}F\left(\Pi_2-(\Pi_2^s)^{-X}\right)(\Pi_2^s)^{-X}_{\xi_1}\dif \xi_1\dif \xi^{\prime}\\
&-\int_{\mathbb{T}^2}\int_{\mathbb{R}}a^{-X}\frac{|\Pi_2-(\Pi_2^s)^{-X}|^2}{\lambda}\dif \xi_1\dif \xi^{\prime}
+\underbrace{\int_{\mathbb{T}^2}\int_{\mathbb{R}}a^{-X}\left(\Pi_2-(\Pi_2^s)^{-X}\right)
\ddiv_\xi\left(u-(u^s)^{-X}\right))\dif \xi_1\dif \xi^{\prime}}\limits_{=:I_7}.
\end{aligned}
\end{equation}

A simple calculation shows that
\begin{align*}
I_2+I_3=\int_{\mathbb{T}^2}\int_{\mathbb{R}}a^{-X}_{\xi_1}\left(p(v)-p((v^s)^{-X})\right)(u_1-(u_1^s)^{-X})\dif \xi_1\dif \xi^{\prime},
\end{align*}
\begin{align*}
I_4+I_6=-\int_{\mathbb{T}^2}\int_{\mathbb{R}}a^{-X}_{\xi_1}\left(\left(u_1-(u^s_1)^{-X}\right)
\left(\Pi_{11}-(\Pi_{11}^s)^{-X}\right)+u_2\Pi_{21}+u_3\Pi_{31}
\right)\dif \xi_1\dif \xi^{\prime},
\end{align*}
\begin{align*}
I_5+I_7=-\int_{\mathbb{T}^2}\int_{\mathbb{R}}a^{-X}_{\xi_1}\left(u_1-(u_1^s)^{-X}\right)\left(\Pi_2-(\Pi_2^s)^{-X}\right)\dif \xi_1\dif \xi^{\prime},
\end{align*}
\begin{align*}
&-\sigma_\ast a^{-X}_{\xi_1}\frac{|u-(u^s)^{-X}|^2}{2}+a^{-X}_{\xi_1}\left(p(v)-p((v^s)^{-X})\right)(u_1-(u_1^s)^{-X})\\
&=-\frac{\sigma_{\ast}}{2}a^{-X}_{\xi_1}\Big|u_1-(u_1^s)^{-X}-\frac{p(v)-p((v^s)^{-X})}{\sigma_\ast}\Big|^2
+a^{-X}_{\xi_1}\frac{|p(v)-p((v^s)^{-X})|^2}{2\sigma_\ast}-\sigma_\ast a^{-X}_{\xi_1}\frac{u_2^2+u_3^2}{2},
\end{align*}
and
\begin{align*}
&a^{-X}Fp^{\prime}((v^s)^{-X})(v-(v^s)^{-X})(v^s)^{-X}_{\xi_1}+
a^{-X}F\left(u_1-(u_1^s)^{-X}\right)(u_1^s)^{-X}_{\xi_1} \\
&=a^{-X}\frac{\sigma_\ast}{v}p^{\prime}((v^s)^{-X})(v^s)^{-X}_{\xi_1}(v-(v^s)^{-X})^2
+a^{-X}\frac{1}{v}\left((\Pi_{11}^s)_{\xi_1}+(\Pi_{2}^s)_{\xi_1}\right)(v-(v^s)^{-X})\left(u_1-(u_1^s)^{-X}\right)\\
&\quad+\frac{\delta}{\nu}\frac{a^{-X}}{\sigma_\ast v}a^{-X}_{\xi_1}\left(u_1-(u_1^s)^{-X}\right)^2
+a^{-X}\frac{1}{v\sigma_\ast}\left((\Pi_{11}^s)_{\xi_1}+(\Pi_{2}^s)_{\xi_1}\right)\left(u_1-(u_1^s)^{-X}\right)^2.
\end{align*}

Combining the above estimates and \eqref{0j1}-\eqref{0j4}, the proof of this lemma is completed.
\end{proof}

To address certain terms via the $a$-contraction method, we rewrite the function $Y(t)$ as follows
\[
Y(t):=\sum\limits_{i=1}\limits^{9}Y_i(t),
\]
where
\[
Y_1(t):=\int_{\mathbb{T}^2}\int_{\mathbb{R}}\frac{a^{-X}}{\sigma_\ast}\rho(u_1^s)^{-X}_{\xi_1}
\left(p(v)-p((v^s)^{-X})\right)\dif \xi_1\dif \xi^{\prime},
\]
\[
Y_2(t):=-\int_{\mathbb{T}^2}\int_{\mathbb{R}}a^{-X}\rho\left(p(v^s)^{-X}\right)_{\xi_1}\left(v-(v^s)^{-X}\right)\dif \xi_1\dif\xi^{\prime},
\]
\[
Y_3(t):=\int_{\mathbb{T}^2}\int_{\mathbb{R}}a^{-X}\rho(u_1^s)^{-X}_{\xi_1}
\left(u_1-(u_1^s)^{-X}-\frac{p(v)-p((v^s)^{-X})}{\sigma_\ast}\right)\dif \xi_1\dif\xi^{\prime},
\]
\[
Y_4(t):=-\frac{1}{2}\int_{\mathbb{T}^2}\int_{\mathbb{R}}a^{-X}_{\xi_1}\rho
\left(u_1-(u_1^s)^{-X}-\frac{p(v)-p((v^s)^{-X})}{\sigma_\ast}\right)\cdot
\left(u_1-(u_1^s)^{-X}+\frac{p(v)-p((v^s)^{-X})}{\sigma_\ast}\right)\dif \xi_1\dif\xi^{\prime},
\]
\[
Y_5(t):=-\int_{\mathbb{T}^2}\int_{\mathbb{R}}a^{-X}_{\xi_1}\rho\left(H(v|(v^s)^{-X})
+\frac{u_2^2+u_3^2}{2}\right)\dif \xi_1\dif\xi^{\prime}
-\int_{\mathbb{T}^2}\int_{\mathbb{R}}a^{-X}_{\xi_1}\rho
\frac{|p(v)-p((v^s)^{-X})|^2}{2\sigma_\ast^2}\dif \xi_1\dif\xi^{\prime},
\]
\[
Y_6(t):=-\int_{\mathbb{T}^2}\int_{\mathbb{R}}a^{-X}_{\xi_1}\rho
\frac{\tau|\Pi_1-(\Pi_1^s)^{-X}|^2}{4\mu}\dif \xi_1\dif\xi^{\prime},
Y_7(t):=-\int_{\mathbb{T}^2}\int_{\mathbb{R}}a^{-X}_{\xi_1}\rho
\frac{\tau|\Pi_2-(\Pi_2^s)^{-X}|^2}{2\lambda}\dif \xi_1\dif\xi^{\prime},
\]
\[
Y_8(t):=\int_{\mathbb{T}^2}\int_{\mathbb{R}}a^{-X}\rho \frac{\tau}{2\mu}\left(\Pi_1-(\Pi_1^s)^{-X}\right):\left((\Pi_1^s)^{-X}\right)_{\xi_1}\dif \xi_1\dif \xi^{\prime},
\]
\[
Y_9(t):=\int_{\mathbb{T}^2}\int_{\mathbb{R}}a^{-X}\rho \frac{\tau}{\lambda}\left(\Pi_2-(\Pi_2^s)^{-X}\right)\left((\Pi_2^s)^{-X}\right)_{\xi_1}\dif \xi_1\dif \xi^{\prime}.
\]
According to \eqref{shiftx}, we note that
\begin{equation}\label{4.10}
\dot{X}(t)=-\frac{M}{\delta}(Y_1(t)+Y_2(t)),
\end{equation}
which implies
\begin{equation}\label{4.11}
\dot{X}(t)Y(t)=-\frac{\delta}{M}|\dot{X}(t)|^2+\dot{X}(t)\sum\limits_{i=3}\limits^9Y_i(t).
\end{equation}

Then, we have the following lemma.
\begin{lemma}\label{le4.6}
There exist a constant $C, C_1>0$ independent of $\tau, \nu, \delta, \varepsilon_1, T$, such that
\begin{align*}
&-\frac{\delta}{2M}|\dot{X}(t)|^2+B_1(t)+B_2(t)+B_{3}(t)-G_1(t)-G_4(t)-\frac{5}{8}D(t)\\
&\le
-C_1\int_{\mathbb{T}^2}\int_{\mathbb{R}}(v^s)^{-X}_{\xi_1}|p(v)-p((v^s)^{-X})|^2\dif \xi_1\dif \xi^{\prime}
+C\int_{\mathbb{T}^2}\int_{\mathbb{R}}a^{-X}_{\xi_1}|p(v)-p((v^s)^{-X})|^3\dif \xi_1\dif \xi^{\prime}\\
&\qquad+C\frac{\delta}{\nu}G_3(t).
\end{align*}
where $G_3(t)$ is defined in Lemma \ref{le4.5} and
\[
D(t)=(\frac{4}{3}\mu+\lambda)\int_{\mathbb{T}^2}\int_{\mathbb{R}}
a^{-X}\frac{|\nabla(p(v)-p((v^s)^{-X}))|^2}{\gamma p^{1+\frac{1}{\gamma}}(v)}\dif \xi_1\dif \xi^{\prime}.
\]
\end{lemma}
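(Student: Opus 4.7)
The plan is to treat every quadratic contribution in $B_1,B_2,G_1,G_4$ as a function of the pressure increment $w:=p(v)-p((v^s)^{-X})$ via Taylor expansion, exploit the Rankine-Hugoniot / Lax structure at small shock strength to extract a cancellation at leading order, and invoke Lemma \ref{poinbds} to upgrade the resulting one-dimensional $a$-contraction estimates to the three-dimensional setting.

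From $p(v)=v^{-\gamma}$ and $H(v)=v^{1-\gamma}/(\gamma-1)$, Taylor expansion around $(v^s)^{-X}$ yields the pointwise identities
\begin{align*}
H(v|(v^s)^{-X})&=\frac{p((v^s)^{-X})^{-1-1/\gamma}}{2\gamma}\,w^{2}+O(|w|^{3}),\\
p(v|(v^s)^{-X})&=\frac{(\gamma+1)\,p((v^s)^{-X})^{-1-1/\gamma}}{2\gamma}\,w^{2}+O(|w|^{3}),\\
(v-(v^s)^{-X})^{2}&=\frac{p((v^s)^{-X})^{-2-2/\gamma}}{\gamma^{2}}\,w^{2}+O(|w|^{3}),
\end{align*}
so that, after substitution, $B_{1}-G_{1}$ reduces to $\int a^{-X}_{\xi_{1}}q_{1}(v^s)w^{2}\dif\xi_{1}\dif\xi^{\prime}$ and $B_{2}-G_{4}$ to $\int a^{-X}(v^s)^{-X}_{\xi_{1}}q_{2}(v^s)w^{2}\dif\xi_{1}\dif\xi^{\prime}$, with the cubic remainders absorbed into the $\int a^{-X}_{\xi_{1}}|w|^{3}$ term on the right-hand side.

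The key algebraic cancellation comes from the Rankine-Hugoniot identity, which at small shock strength gives $\sigma_{\ast}^{2}=\gamma\,p(v_{m})^{1+1/\gamma}(1+O(\delta))$ for some intermediate state $v_{m}\in[v_{-},v_{+}]$, forcing $|q_{1}(v^s)|\leq C\delta$ and $|q_{2}(v^s)|\leq C\delta$ uniformly on $[v_{-},v_{+}]$. Together with the pointwise estimate $|a^{-X}_{\xi_{1}}|\leq C(\nu/\delta)\,|(v^s)^{-X}_{\xi_{1}}|$ from \eqref{weight1}, this produces
\[
B_{1}+B_{2}-G_{1}-G_{4}\leq C\nu\int_{\mathbb{T}^{2}}\int_{\mathbb{R}}(v^s)^{-X}_{\xi_{1}}w^{2}\dif\xi_{1}\dif\xi^{\prime}+C\int_{\mathbb{T}^{2}}\int_{\mathbb{R}}a^{-X}_{\xi_{1}}|w|^{3}\dif\xi_{1}\dif\xi^{\prime}.
\]
Choosing $\nu\leq C\sqrt{\delta}$ (as permitted by the definition of $a$) together with the smallness $\delta\leq\delta_{0}$ absorbs the first piece into the target $-C_{1}\int (v^s)^{-X}_{\xi_{1}}w^{2}$. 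For $B_{3}$ I use the triangle inequality $(u_{1}-(u_{1}^{s})^{-X})^{2}\leq 2(u_{1}-(u_{1}^{s})^{-X}-w/\sigma_{\ast})^{2}+2w^{2}/\sigma_{\ast}^{2}$: the first piece generates precisely the $C(\delta/\nu)G_{3}$ term on the right, while the second piece is of the form $O(\delta/\nu)\int a^{-X}_{\xi_{1}}w^{2}\sim O(1)\int (v^s)^{-X}_{\xi_{1}}w^{2}$ and is again absorbed into the $-C_{1}$ term for $\delta_{0}$ small.

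A subtlety in three dimensions is that $w$ may have nontrivial transverse oscillation, so the $\int a^{-X}_{\xi_{1}}w^{2}$-type bounds cannot be reduced to their one-dimensional analogues directly. Here the slack $-\tfrac{5}{8}D(t)$ on the left is essential: after the one-dimensional change of variable $y_{1}=(p(v_{-})-p((v^s)^{-X}))/\delta\in[0,1]$ that flattens the shock profile, Lemma \ref{poinbds} decomposes $w$ into its transverse mean (which contributes to $\int(v^s)^{-X}_{\xi_{1}}|\overline{w}|^{2}$) plus a zero-transverse-average piece controlled in $L^{2}$ by $|\nabla_{\xi^{\prime}}w|^{2}$, the latter being absorbed by $D(t)$. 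The slack $-\tfrac{\delta}{2M}|\dot X(t)|^{2}$ is not needed at this stage; it is kept on the left for use in the estimates of $B_{4},\ldots,B_{10}$. The main obstacle is the explicit algebraic identification of $q_{1},q_{2}$ together with the uniform bookkeeping of the small parameters $\delta,\nu,\tau,\varepsilon_{1}$, so that the constants $C_{1}$ and $C$ are independent of $\tau$ and remain uniform as $\delta_{0},\varepsilon_{1}\to 0$.
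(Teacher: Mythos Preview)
Your argument has a genuine gap at the step where you claim $|q_2(v^s)|\le C\delta$, i.e.\ that $B_2$ and $G_4$ cancel at leading order. They do not. From your own Taylor expansions,
\[
B_2\approx \sigma_\ast\int a^{-X}(v^s)^{-X}_{\xi_1}\,\frac{\gamma+1}{2\gamma\,p((v^s)^{-X})}\,w^2,
\qquad
G_4\approx \sigma_\ast\int a^{-X}(v^s)^{-X}_{\xi_1}\,\frac{1}{\gamma\,p((v^s)^{-X})}\,w^2,
\]
so $q_2\approx \dfrac{\sigma_\ast(\gamma-1)}{2\gamma\,p((v^s)^{-X})}$, which is of order one, not $O(\delta)$. (The Rankine--Hugoniot cancellation you invoke is exactly what makes $q_1=O(\delta)$ in $B_1-G_1$, but there is no analogous identity for $q_2$.) Likewise the $w^2/\sigma_\ast^2$ piece of $B_3$ contributes another positive term of the same size. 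You therefore cannot ``absorb'' these into $-C_1\int(v^s)^{-X}_{\xi_1}w^2$ by taking $\delta_0$ small; they are comparable to the very quantity you are trying to bound.

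The mechanism in the paper is different and uses both ingredients you set aside. After the change of variable $y_1\in[0,1]$, the full Poincar\'e inequality of Lemma~\ref{poinbds} (with the \emph{longitudinal} weight $y_1(1-y_1)$ as well as the transverse part) bounds $\int|w-\bar w|^2$ by the diffusion $D(t)$; the precise numerical coefficients of $B_2$, $B_3$, $G_4$ and the lower bound for $\mathcal D_1$ are all equal to $\delta\alpha_-$ up to multiplicative constants depending only on $\gamma$, and it is this matching that makes $\tfrac58 D(t)$ just large enough. The remaining mean $\bar w=\int_{[0,1]\times\mathbb T^2}w$ is \emph{not} small and is controlled by the shift term: from \eqref{4.10} one has $\dot X(t)=\tfrac{2M}{\sigma_-^2 v_-}\bar w+O(\nu+\delta+\varepsilon_1)\|w\|_{L^1}$, so $-\tfrac{\delta}{2M}|\dot X|^2$ produces exactly the needed $-\tfrac{M\delta}{\sigma_-^4 v_-^2}\,\bar w^2$. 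Thus your statement that ``the slack $-\tfrac{\delta}{2M}|\dot X(t)|^2$ is not needed at this stage'' is incorrect: without it the constant mode $\bar w$ is uncontrolled and the inequality fails.
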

\begin{proof}
Introduce variables $y$ and $w$ as follows (see \cite{WWS}):
\[
w:=p(v)-p((v^s)^{-X}),\quad
y_1:=\frac{p(v_-)-p((v^s(\xi_1))^{-X})}{\delta},\quad
y^{\prime}=(y_2, y_3)=\xi^{\prime}.
\]
Changing variables $\xi_1\in\mathbb{R}\mapsto y_1\in(0, 1)$ and applying the definition of $a(\xi_1)$, we have
$$\frac{\dif y_1}{\dif\xi_1}=-\frac{1}{\delta}p((v^s)^{-X})_{\xi_1}$$
 and
\begin{equation}\label{4.12}
a(\xi_1)^{-X}=1+\nu y_1,\quad
a^{-X}_{\xi_1}=-\nu\frac{\dif y_1}{\dif\xi_1},\quad
|a^{-X}-1|\le \nu\le\sqrt{\delta}.
\end{equation}
In addition, we have the following estimates:
\begin{equation}\label{4.13}
|\sigma_\ast-\sigma_-|\le C\delta
\end{equation}
and
\begin{equation}\label{4.14}
|\sigma_-^2+p^{\prime}((v^s)^{-X})|\le C\delta,\quad
\Big|\frac{1}{\sigma_-^2}-\frac{1}{\gamma p^{1+\frac{1}{\gamma}}((v^s)^{-X})}\Big|\le C\delta,
\end{equation}
where $\sigma_-=\sqrt{-p^{\prime}(v_-)}$.

Firstly, we estimates the shift part $-\frac{\delta}{2M}|\dot{X}(t)|^2$. Moreover, since $\dot{X}(t)=-\frac{M}{\delta}(Y_1(t)+Y_2(t))$, we will estimate $Y_1(t)$, $Y_2(t)$.
Using $\eqref{2.6}_2$, we get
\[
Y_1(t)=\frac{1}{\sigma_\ast^2}\int_{\mathbb{T}^2}\int_{\mathbb{R}}a^{-X}\frac{1}{v}
\left(p((v^s)^{-X}_{\xi_1}-(\Pi_{11}^s)_{\xi_1}-(\Pi_2^s)_{\xi_1}\right)
\left(p(v)-p((v^s)^{-X})\right)\dif \xi_1\dif \xi^{\prime}.
\]
Using Lemma \ref{pvsw} and applying the new variables $y$ and $w$, we deduce that
\[
\frac{1}{\sigma_\ast^2}\int_{\mathbb{T}^2}\int_{\mathbb{R}}a^{-X}\frac{1}{v}
p((v^s)^{-X}_{\xi_1}\left(p(v)-p((v^s)^{-X})\right)\dif \xi_1\dif \xi^{\prime}
=-\frac{\delta}{\sigma_\ast^2}\int_{\mathbb{T}^2}\int_0^1a^{-X}\frac{w}{v}\dif y_1\dif y^{\prime},
\]
and
\begin{align*}
-\frac{1}{\sigma_\ast^2}\int_{\mathbb{T}^2}\int_{\mathbb{R}}a^{-X}\frac{1}{v}
\left(\Pi_{11}^s)_{\xi_1}+(\Pi_2^s)_{\xi_1}\right)
\left(p(v)-p((v^s)^{-X})\right)\dif \xi_1\dif \xi^{\prime}
&\le C\delta\int_{\mathbb{T}^2}\int_{\mathbb{R}}(v^s)^{-X}_{\xi_1}|w|\dif \xi_1\dif \xi^{\prime}\\
&\le C\delta^2\int_{\mathbb{T}^2}\int_0^1|w|\dif y_1\dif y^{\prime}.
\end{align*}
Thus, using \eqref{4.12} and \eqref{4.14}, it holds
\begin{equation}\label{4.15}
\Big|Y_1(t)+\frac{\delta}{\sigma_-^2v_-}\int_{\mathbb{T}^2}\int_0^1w\dif y_1\dif y^{\prime}\Big|
\le C\delta(\nu+\delta)\int_0^1|w|\dif y_1\dif y^{\prime}.
\end{equation}
For $Y_2(t)$, we have
\[
Y_2(t)=-\int_{\mathbb{T}^2}\int_{\mathbb{R}}a^{-X}\rho\left(p(v^s)\right)_{\xi_1}^{-X}\left(v-(v^s)^{-X}\right)\dif \xi_1\dif\xi^{\prime}
=\delta\int_{\mathbb{T}^2}\int_0^1a^{-X}\frac{v-(v^s)^{-X}}{v}\dif y_1\dif y^{\prime}.
\]
On the other hand, noting that $v=p^{-\frac{1}{\gamma}}(v)$, we get
\[
\Big|v-(v^s)^{-X}-\frac{p(v)-p((v^s)^{-X})}{\gamma p^{\frac{1}{\gamma}+1}((v^s)^{-X})}\Big|
\le C|p(v)-p((v^s)^{-X})|^2.
\]
Together with \eqref{4.14} and \eqref{3.4}, it imply that
\[
\Big|v-(v^s)^{-X}-\frac{p(v)-p((v^s)^{-X})}{\sigma_-^2}\Big|
\le C(\delta+\varepsilon_1)|p(v)-p((v^s)^{-X})|.
\]
Then, we derive that
\begin{equation}\label{4.16}
\Big|Y_2(t)+\frac{\delta}{\sigma_-^2v_-}\int_{\mathbb{T}^2}\int_0^1w\dif y_1\dif y^{\prime}\Big|
\le C\delta(\nu+\delta+\varepsilon_1)\int_0^1|w|\dif y_1\dif y^{\prime}.
\end{equation}
Combining \eqref{4.10}, \eqref{4.15} and \eqref{4.16}, we conclude that
\begin{align*}
\Big|\dot{X}(t)-\frac{2M}{\sigma_-^2v_-}\int_{\mathbb{T}^2}\int_0^1w\dif y_1\dif y^{\prime}\Big|
&\le \frac{M}{\delta}\sum\limits_{i=1}^2
\Big|Y_i(t)+\frac{\delta}{\sigma_-^2v_-}\int_{\mathbb{T}^2}\int_0^1w\dif y_1\dif y^{\prime}\Big|\\
&\le C(\nu+\delta+\varepsilon_1)\int_{\mathbb{T}^2}\int_0^1|w|\dif y_1\dif y^{\prime},
\end{align*}
which implies
\[
\left(\Big|\frac{2M}{\sigma_-^2v_-}\int_{\mathbb{T}^2}\int_0^1w\dif y_1\dif y^{\prime}\Big|-|\dot{X}(t)|\right)^2
\le C(\nu+\delta+\varepsilon_1)^2\int_{\mathbb{T}^2}\int_0^1|w|^2\dif y_1\dif y^{\prime}.
\]
Therefore, we have
\begin{equation}\label{x1}
-\frac{\delta}{2M}|\dot{X}(t)|^2\le -\frac{M\delta}{\sigma_-^4v_-^2}
\left(\int_{\mathbb{T}^2}\int_0^1w\dif y_1\dif y^{\prime}\right)^2
+C\delta(\nu+\delta+\varepsilon_1)^2\int_{\mathbb{T}^2}\int_0^1|w|^2\dif y_1\dif y^{\prime}.
\end{equation}

Next, we estimate the bad terms $B_i(t)$ $(i=1,2,3)$. For $B_1(t)$, changing the variables and using \eqref{4.13}, we have
\[
B_1(t)=\frac{\nu}{2\sigma_\ast}\int_{\mathbb{T}^2}\int_0^1w^2\dif y_1\dif y^{\prime}
\le \frac{\nu}{2\sigma_-}\int_{\mathbb{T}^2}\int_0^1w^2\dif y_1\dif y^{\prime}
+C\nu\delta\int_{\mathbb{T}^2}\int_0^1w^2\dif y_1\dif y^{\prime}.
\]

For $B_2(t)$, using \eqref{4.14}, Lemma 2.1 in \cite{WY} and changing the variables, we deduce that
\begin{equation}\label{b2}
\begin{aligned}
B_2(t)&=\sigma_\ast\delta\int_{\mathbb{T}^2}\int_0^1
(1+\nu y_1)p(v|(v^s)^{-X})\frac{1}{-p^{\prime}((v^s)^{-X})}\dif y_1\dif y^{\prime}\\
&\le\sigma_\ast\delta(1+\nu)\int_{\mathbb{T}^2}\int_0^1
\left(\frac{\gamma+1}{2\gamma}\frac{1}{p((v^s)^{-X})}+C\varepsilon_1\right)
\frac{|p(v)-p((v^s)^{-X})|^2}{|p^{\prime}((v^s)^{-X})|}\dif y_1\dif y^{\prime}\\
&\le (C\delta+\sigma_-)\delta(1+\nu)\int_{\mathbb{T}^2}\int_0^1
\left(\frac{\gamma+1}{2\gamma\sigma_-p(v_-)}\frac{\sigma_-p(v_-)}{p((v^s)^{-X})}+C\varepsilon_1\right)
w^2\left(C\delta+\frac{1}{\sigma_-^2}\right)\dif y_1\dif y^{\prime}\\
&\le (C\delta+\sigma_-)\delta(1+\nu)\int_{\mathbb{T}^2}\int_0^1
\left(\alpha_-\sigma_-(1+C\delta)+C\varepsilon_1\right)
w^2\left(C\delta+\frac{1}{\sigma_-^2}\right)\dif y_1\dif y^{\prime}\\
&\le \delta\alpha_-(1+C(\nu+\delta+\varepsilon_1))\int_{\mathbb{T}^2}\int_0^1
w^2\dif y_1\dif y^{\prime},
\end{aligned}
\end{equation}
where $\alpha_-=\frac{\gamma+1}{2\gamma\sigma_-p(v_-)}$.

For $B_3(t)$, using Young's inequality, we can obtain that there exists a constant $\kappa>0$,
\begin{align*}
&\frac{\delta}{\nu}\int_{\mathbb{T}^2}\int_{\mathbb{R}}\frac{a^{-X}}{\sigma_\ast v}a^{-X}_{\xi_1}\left(u_1-(u_1^s)^{-X}\right)^2\dif \xi_1\dif \xi^{\prime}\\
&=\frac{\delta}{\nu}\int_{\mathbb{T}^2}\int_{\mathbb{R}}\frac{a^{-X}}{\sigma_\ast v}a^{-X}_{\xi_1}\left(u_1-(u_1^s)^{-X}-\frac{p(v)-p((v^s)^{-X})}{\sigma_\ast}
+\frac{p(v)-p((v^s)^{-X})}{\sigma_\ast}\right)^2\dif \xi_1\dif \xi^{\prime}\\
&\le C(1+\frac{1}{\kappa})\frac{\delta}{\nu}G_3(t)+
\frac{\delta}{\nu}\int_{\mathbb{T}^2}\int_{\mathbb{R}}\frac{a^{-X}}{\sigma_\ast^3 v}a^{-X}_{\xi_1}(1+\kappa)\left(p(v)-p((v^s)^{-X})\right)^2\dif \xi_1\dif \xi^{\prime}.
\end{align*}
Subsequently, by using \eqref{4.12} and \eqref{4.13}, we can get
\begin{align*}
&\frac{\delta}{\nu}\int_{\mathbb{T}^2}\int_{\mathbb{R}}\frac{a^{-X}}{\sigma_\ast^3 v}a^{-X}_{\xi_1}(1+\kappa)\left(p(v)-p((v^s)^{-X})\right)^2\dif \xi_1\dif \xi^{\prime}\\
&=\delta(1+\kappa)\int_{\mathbb{T}^2}\int_0^1\frac{1+\nu y_1}{\sigma_-^3 v_-}\frac{\sigma_-^3v_-}{\sigma_\ast^3v}w^2\dif y_1\dif y^{\prime}\\
&\le \delta(1+\kappa)(1+\nu)(1+C\varepsilon_1)(1+C\delta)\int_{\mathbb{T}^2}\int_0^1\frac{1}{\sigma_-^3 v_-}w^2\dif y_1\dif y^{\prime}\\
&\le (1+\kappa)\frac{2}{\gamma+2}\delta\alpha_-(1+C(\nu+\varepsilon_1+\delta))
\int_{\mathbb{T}^2}\int_0^1w^2\dif y_1\dif y^{\prime},
\end{align*}
where
\[
\frac{1}{\sigma_-^3v_-\alpha_-}
=\frac{1}{\sigma_-^3v_-}\frac{2\gamma\sigma_-p(v_-)}{\gamma+1}
=\frac{2}{\gamma+1}.
\]
Therefore, we conclude that
\begin{equation}\label{b3}
B_3(t)\le(1+\kappa)\frac{2}{\gamma+1}\delta\alpha_-(1+C(\nu+\varepsilon_1+\delta))
\int_{\mathbb{T}^2}\int_0^1w^2\dif y_1\dif y^{\prime}
+C(1+\frac{1}{\kappa})\frac{\delta}{\nu}G_3(t).
\end{equation}

Next, we deal with good terms $G_1(t)$ and $G_4(t)$. For $G_1(t)$, using Lemma 2.1 in \cite{WY}, we have
\begin{equation}\label{g111}
\begin{aligned}
G_1(t)&=\sigma_\ast\int_{\mathbb{T}^2}\int_{\mathbb{R}}a^{-X}_{\xi_1} H(v|(v^s)^{-X})\dif \xi_1\dif \xi^{\prime}\\
&\ge \underbrace{\sigma_\ast\int_{\mathbb{T}^2}\int_{\mathbb{R}}a^{-X}_{\xi_1} \frac{(p(v)-p((v^s)^{-X}))^2}{2\gamma p^{1+\frac{1}{\gamma}}((v^s)^{-X})}\dif \xi_1\dif \xi^{\prime}}\limits_{=:\mathcal{G}(t)}
-\sigma_\ast\int_{\mathbb{T}^2}\int_{\mathbb{R}}a^{-X}_{\xi_1} \frac{1+\gamma}{3\gamma^2} \frac{(p(v)-p((v^s)^{-X}))^3}{2\gamma p^{2+\frac{1}{\gamma}}((v^s)^{-X})}\dif \xi_1\dif \xi^{\prime}.
\end{aligned}
\end{equation}
By using \eqref{4.13}, \eqref{4.14} and changing variables, we can get
\begin{align*}
\mathcal{G}(t)&\ge \frac{1}{2\sigma_-}\frac{\sigma_\ast}{\sigma_-}(1-C\delta)\int_{\mathbb{T}^2}\int_{\mathbb{R}}a^{-X}_{\xi_1} (p(v)-p((v^s)^{-X})^2\dif \xi_1\dif \xi^{\prime}\\
&\ge \frac{\nu}{2\sigma_-}(1-C\delta)\int_{\mathbb{T}^2}\int_0^1w^2\dif y_1\dif y^{\prime},
\end{align*}
combining the estimates for $B_1(t)$, it holds
\begin{equation}\label{bg1}
B_1(t)-\mathcal{G}(t)\le C\nu\delta\int_{\mathbb{T}^2}\int_0^1w^2\dif y_1\dif y^{\prime}.
\end{equation}

For $G_4(t)$, we first noting that
\[
v-(v^s)^{-X}=\frac{p(v)-p((v^s)^{-X})}{p^{\prime}(\zeta)},
\]
where $\zeta$ between $v$ and $(v^s)^{-X}$.
Then, using new variables and \eqref{4.13}, we have
\begin{equation}\label{g4}
\begin{aligned}
G_4(t)&=-\int_{\mathbb{T}^2}\int_{\mathbb{R}}a^{-X}\frac{\sigma_\ast}{v}p^{\prime}((v^s)^{-X})(v^s)^{-X}_{\xi_1}(v-(v^s)^{-X})^2\dif \xi_1\dif \xi^{\prime}\\
&=\delta\int_{\mathbb{T}^2}\int_0^1a^{-X}\frac{\sigma_\ast}{v}\frac{w^2}{|p^{\prime}(\zeta)|^2}\dif y_1\dif y^{\prime}\\
&\ge \delta(\sigma_--C\delta)(\frac{1}{v_-}-C\varepsilon_1)
\int_{\mathbb{T}^2}\int_0^1\frac{1}{|p^{\prime}(v_-)|^2}\frac{|p^{\prime}(v_-)|^2}{|p^{\prime}(\zeta)|^2}w^2\dif y_1\dif y^{\prime}\\
&\ge \frac{\delta\sigma_-}{v_-|p^{\prime}(v_-)|^2}(1-C(\delta+\varepsilon_1))\int_{\mathbb{T}^2}
\int_0^1w^2\dif y_1\dif y^{\prime}\\
&=\frac{2}{\gamma+1}\delta\alpha_-(1-C(\delta+\varepsilon_1))\int_{\mathbb{T}^2}
\int_0^1w^2\dif y_1\dif y^{\prime}.
\end{aligned}
\end{equation}

Next, we estimate $D(t)$, applying new variables, we can obtain that
\begin{align*}
D(t)&=(\frac{4}{3}\mu+\lambda)\int_{\mathbb{T}^2}\int_{\mathbb{R}}
a^{-X}\frac{|\nabla_\xi(p(v)-p((v^s)^{-X}))|^2}{\gamma p^{1+\frac{1}{\gamma}}(v)}\dif \xi_1\dif \xi^{\prime}\\
&\ge(\frac{4}{3}\mu+\lambda)\int_{\mathbb{T}^2}\int_{\mathbb{R}}
\frac{|\partial_{\xi_1}(p(v)-p((v^s)^{-X}))|^2}{\gamma p^{1+\frac{1}{\gamma}}(v)}\dif \xi_1\dif \xi^{\prime}\\
&\qquad+(\frac{4}{3}\mu+\lambda)\int_{\mathbb{T}^2}\int_{\mathbb{R}}
\frac{|\nabla_{\xi^\prime}(p(v)-p((v^s)^{-X}))|^2}{\gamma p^{1+\frac{1}{\gamma}}(v)}\dif \xi_1\dif \xi^{\prime}\\
&=\underbrace{(\frac{4}{3}\mu+\lambda)\int_{\mathbb{T}^2}\int_0^1
\frac{|\partial_{y_1}w|^2}{\gamma p^{1+\frac{1}{\gamma}}(v)}\frac{\dif y_1}{\dif \xi_1}\dif y_1\dif y^{\prime}}\limits_{=:\mathcal{D}_1}
+\underbrace{(\frac{4}{3}\mu+\lambda)\int_{\mathbb{T}^2}\int_0^1
\frac{|\nabla_{y^\prime}w|^2}{\gamma p^{1+\frac{1}{\gamma}}(v)}\frac{\dif \xi_1}{\dif y_1}\dif y_1\dif y^{\prime}}\limits_{=:\mathcal{D}_2}.
\end{align*}
For $\mathcal{D}_1$, we have
\begin{align*}
	\mathcal{D}_1=\int_{\mathbb{T}^2}\int_0^1y_1(1-y_1)|\partial_{y_1}w|^2
	\left(\frac{p((v^s)^{-X})}{p(v)}\right)^{1+\frac{1}{\gamma}}
	\frac{1}{ y_1(1-y_1)}
	\frac{\frac{4}{3}\mu+\lambda}{\gamma p^{1+\frac{1}{\gamma}}((v^s)^{-X})
	}\frac{\dif y_1}{\dif \xi_1}
	\dif y_1\dif y^{\prime}.
\end{align*}
Subsequently, using \eqref{2.10}, it imply
\[
\frac{(\frac{4}{3}\mu+\lambda)}{\gamma p^{1+\frac{1}{\gamma}}((v^s)^{-X})}\frac{\dif y_1}{\dif \xi_1}=\frac{\frac{4}{3}\mu+\lambda}{\tau h^\prime((v^s)^{-X})+\frac{4}{3}\mu+\lambda}
\frac{-\left(\sigma_\ast^2((v^s)^{-X}-v_-)+\left(p((v^s)^{-X})-p(v_-)\right)\right)}{\delta\sigma_\ast}.
\]
On the other hand, using \eqref{2.8}, we can get
\begin{align*}
&\frac{-\left(\sigma_\ast^2((v^s)^{-X}-v_-)+\left(p((v^s)^{-X})-p(v_-)\right)\right)}{\sigma_\ast}\\
&=\frac{-1}{\sigma_\ast(v_--v_+)}\left((p(v_-)-p(v_+))((v^s)^{-X}-v_-)
-\left(p((v^s)^{-X})-p(v_-))(v_+-v_-)\right)\right)\\
&=\frac{-1}{\sigma_\ast(v_--v_+)}\left((p((v^s)^{-X})-p(v_+))((v^s)^{-X}-v_-)
-\left(p((v^s)^{-X})-p(v_-))((v^s)^{-X}-v_+)\right)\right),
\end{align*}
which together with $y_1=\frac{p(v_-)-p((v^s(\xi_1))^{-X})}{\delta}$ and $1-y_1=\frac{p((v^s(\xi_1))^{-X})-p(v_+)}{\delta}$, we can deduce that
\begin{align*}
&\frac{1}{ y_1(1-y_1)}
\frac{-\left(\sigma_\ast^2((v^s)^{-X}-v_-)+\left(p((v^s)^{-X})-p(v_-)\right)\right)}{ \sigma_\ast}\\
&=\frac{\delta^2}{\sigma_\ast(v_--v_+)}\left(\frac{(v^s)^{-X}-v_-}{p((v^s)^{-X})-p(v_-)}
-\frac{(v^s)^{-X}-v_+}{p((v^s)^{-X})-p(v_+)}\right).
\end{align*}
Then, we derive that
\begin{align*}
&\frac{1}{ y_1(1-y_1)}
\frac{\frac{4}{3}\mu+\lambda}{\gamma p^{1+\frac{1}{\gamma}}((v^s)^{-X})
}\frac{\dif y_1}{\dif \xi_1}\\
&=\frac{(\frac{4}{3}\mu+\lambda)}{\tau h^\prime((v^s)^{-X})+\frac{4}{3}\mu+\lambda}
\frac{\delta}{\sigma_\ast(v_--v_+)}\left(\frac{(v^s)^{-X}-v_-}{p((v^s)^{-X})-p(v_-)}
-\frac{(v^s)^{-X}-v_+}{p((v^s)^{-X})-p(v_+)}\right).
\end{align*}
Therefore, we can conclude that
\begin{align*}
&\Big|\frac{1}{ y_1(1-y_1)}
\frac{\frac{4}{3}\mu+\lambda}{\gamma p^{1+\frac{1}{\gamma}}((v^s)^{-X})}\frac{\dif y_1}{\dif \xi_1}-\frac{\delta p^{\prime\prime}(v_-)}{2\sigma_-(p^\prime(v_-))^2}\Big|\\
&=\Big|\left(\frac{(\frac{4}{3}\mu+\lambda)}{\tau h^\prime((v^s)^{-X})+\frac{4}{3}\mu+\lambda}-1\right)
\frac{\delta}{\sigma_\ast(v_--v_+)}\left(\frac{(v^s)^{-X}-v_-}{p((v^s)^{-X})-p(v_-)}
-\frac{(v^s)^{-X}-v_+}{p((v^s)^{-X})-p(v_+)}\right)\Big|\\
&\quad+\Big|\frac{\delta}{\sigma_\ast(v_--v_+)}\left(\frac{(v^s)^{-X}-v_-}{p((v^s)^{-X})-p(v_-)}
-\frac{(v^s)^{-X}-v_+}{p((v^s)^{-X})-p(v_+)}\right)-\frac{\delta p^{\prime\prime}(v_-)}{2\sigma_\ast(p^\prime(v_-))^2}\Big|\\
&\quad+\Big|\frac{\delta p^{\prime\prime}(v_-)}{2\sigma_\ast(p^\prime(v_-))^2}-\frac{\delta p^{\prime\prime}(v_-)}{2\sigma_-(p^\prime(v_-))^2}\Big|=:I_1+I_2+I_3.
\end{align*}
Using \eqref{sccs}, \eqref{4.13}, Lemma \ref{pvsw} and Lemma 3.1 in \cite{KV3}, we have
\begin{align*}
I_1&=\Big|\frac{\tau h^\prime((v^s)^{-X})}{\tau h^\prime((v^s)^{-X})+\frac{4}{3}\mu+\lambda}
\frac{\delta}{\sigma_\ast(v_--v_+)}\left(\frac{(v^s)^{-X}-v_-}{p((v^s)^{-X})-p(v_-)}
-\frac{(v^s)^{-X}-v_+}{p((v^s)^{-X})-p(v_+)}\right)\Big|\\
&\le \Big|\frac{2h^\prime((v^s)^{-X})}{\frac{4}{3}\mu+\lambda}
\frac{\delta}{\sigma_\ast(v_--v_+)}\left(\frac{(v^s)^{-X}-v_-}{p((v^s)^{-X})-p(v_-)}
-\frac{(v^s)^{-X}-v_+}{p((v^s)^{-X})-p(v_+)}\right)\Big|\\
&\le C\delta^3,
\end{align*}
\begin{align*}
I_2&=\Big|\frac{\delta}{\sigma_\ast(v_--v_+)}\left(\frac{(v^s)^{-X}-v_-}{p((v^s)^{-X})-p(v_-)}
-\frac{(v^s)^{-X}-v_+}{p((v^s)^{-X})-p(v_+)}-\frac{ p^{\prime\prime}(v_-)}{2\sigma_\ast(p^\prime(v_-))^2}(v_--v_+)\right)\Big|\\
&\le C\delta^2,
\end{align*}
and
\[
I_3=\Big|\frac{\delta p^{\prime\prime}(v_-)}{2(p^\prime(v_-))^2}\left(\frac{1}{\sigma_\ast}-\frac{1}{\sigma_-}\right)\Big|\le C\delta^2.
\]
Thus, we can get
\begin{equation}\label{5d1}
\Big|\frac{1}{ y_1(1-y_1)}
\frac{\frac{4}{3}\mu+\lambda}{\gamma p^{1+\frac{1}{\gamma}}((v^s)^{-X})}\frac{\dif y_1}{\dif \xi_1}-\frac{\delta p^{\prime\prime}(v_-)}{2\sigma_-(p^\prime(v_-))^2}\Big|\le C\delta^2.
\end{equation}
In addition, using Sobolev's embedding theorem and \eqref{3.4}, we note that
\[
\Big|\left(\frac{p((v^s)^{-X})}{p(v)}\right)^{1+\frac{1}{\gamma}}-1\Big|\le C|v-(v^s)^{-X}|\le C\varepsilon_1,
\]
and
\begin{equation}\label{5d2}
\frac{p^{\prime\prime}(v_-)}{2\sigma_-(p^\prime(v_-))^2}=\frac{\gamma+1}{2\gamma\sigma_-p(v_-)}=\alpha_-.
\end{equation}\
Therefore, we derive that
\begin{equation}\label{d1}
\begin{aligned}
\mathcal{D}_1
&\ge (1-C\varepsilon_1)\left(\frac{\delta p^{\prime\prime}(v_-)}{2\sigma_-(p^\prime(v_-))^2}-C\delta_2\right)
\int_{\mathbb{T}^2}\int_0^1y_1(1-y_1)|\partial_{y_1}w|^2
\dif y_1\dif y^{\prime}\\
&\ge \delta\alpha_-(1-C(\delta+\varepsilon_1))\int_{\mathbb{T}^2}\int_0^1y_1(1-y_1)|\partial_{y_1}w|^2
\dif y_1\dif y^{\prime}.
\end{aligned}
\end{equation}
For $\mathcal{D}_2$, firstly, applying \eqref{5d1} and \eqref{5d2}, we have
\[
\frac{1}{ y_1(1-y_1)}
\frac{\frac{4}{3}\mu+\lambda}{\gamma p^{1+\frac{1}{\gamma}}((v^s)^{-X})}\frac{\dif y_1}{\dif \xi_1}\le C\delta^2+\delta\alpha_-.
\]
Subsequently, choosing $\delta$ small enough such that $C\delta\le \alpha_-$, we can deduce that
\[
y_1(1-y_1)\frac{\dif \xi_1}{\dif y_1}\ge
\frac{\frac{4}{3}\mu+\lambda}{|p^{\prime}((v^s)^{-X})|}\frac{1} {C\delta^2+\delta\alpha_-}
\ge \frac{\frac{4}{3}\mu+\lambda}{2\delta\alpha_-|p^{\prime}(v_-)|}.
\]
Thus, we conclude that
\begin{equation}\label{d2}
\begin{aligned}
\mathcal{D}_2&=(\frac{4}{3}\mu+\lambda)\int_{\mathbb{T}^2}\int_0^1
\frac{|\nabla_{y^\prime}w|^2}{y_1(1-y_1)}\left(\frac{p(v_-)}{p(v)}\right)^{1+\frac{1}{\gamma}}
\frac{y_1(1-y_1)}{\gamma p^{1+\frac{1}{\gamma}}(v_-)}\frac{\dif \xi_1}{\dif y_1}\dif y_1\dif y^{\prime}\\
&\ge(\frac{4}{3}\mu+\lambda)(1-C(\delta+\varepsilon_1))\int_{\mathbb{T}^2}\int_0^1
\frac{|\nabla_{y^\prime}w|^2}{y_1(1-y_1)}
\frac{\frac{4}{3}\mu+\lambda}{2\delta\alpha_-|p^{\prime}(v_-)|^2}
\dif y_1\dif y^{\prime}\\
&\ge\frac{(\frac{4}{3}\mu+\lambda)^2}{p^{\prime\prime}(v_-)}\frac{\sigma_-}{\delta} (1-C(\delta+\varepsilon_1))\int_{\mathbb{T}^2}\int_0^1
\frac{|\nabla_{y^\prime}w|^2}{y_1(1-y_1)}
\dif y_1\dif y^{\prime}.
\end{aligned}
\end{equation}
Combining \eqref{d1} and \eqref{d2}, we get
\begin{equation}\label{ddd}
\begin{aligned}
D(t)\ge&\delta\alpha_-(1-C(\delta+\varepsilon_1))\int_{\mathbb{T}^2}\int_0^1y_1(1-y_1)|\partial_{y_1}w|^2
\dif y_1\dif y^{\prime}\\
&+\frac{(\frac{4}{3}\mu+\lambda)^2}{p^{\prime\prime}(v_-)}\frac{\sigma_-}{\delta} (1-C(\delta+\varepsilon_1))\int_{\mathbb{T}^2}\int_0^1
\frac{|\nabla_{y^\prime}w|^2}{y_1(1-y_1)}
\dif y_1\dif y^{\prime}.
\end{aligned}
\end{equation}

Next, combining \eqref{b2}, \eqref{b3}, \eqref{bg1}, \eqref{g4} and \eqref{ddd}, we can derive that
\begin{align*}
&B_1(t)+B_2(t)+B_3(t)-\mathcal{G}_1(t)-G_4(t)-\frac{5}{8}D(t)\\
\le &C\nu\delta\int_{\mathbb{T}^2}\int_0^1w^2\dif y_1\dif y^{\prime}
+\delta\alpha_-(1+C(\nu+\delta+\varepsilon_1))\int_{\mathbb{T}^2}\int_0^1
w^2\dif y_1\dif y^{\prime}\\
&+(1+\kappa)\frac{2\delta\alpha_-}{\gamma+1}(1+C(\nu+\varepsilon_1+\delta))\int_{\mathbb{T}^2}\int_0^1
w^2\dif y_1\dif y^{\prime}
-\frac{2\delta\alpha_-}{\gamma+1}(1-C(\delta+\varepsilon_1))\int_{\mathbb{T}^2}\int_0^1
w^2\dif y_1\dif y^{\prime}
\\
&
+C(1+\frac{1}{\kappa})\frac{\delta}{\nu}G_3(t)
-\frac{3}{4}\delta\alpha_-(1-C(\delta+\varepsilon_1))\int_{\mathbb{T}^2}\int_0^1y_1(1-y_1)|\partial_{y_1}w|^2
\dif y_1\dif y^{\prime}\\
& -\frac{3}{4}\frac{(\frac{4}{3}\mu+\lambda)^2}{p^{\prime\prime}(v_-)}\frac{\sigma_-}{\delta} (1-C(\delta+\varepsilon_1))\int_{\mathbb{T}^2}\int_0^1
\frac{|\nabla_{y^\prime}w|^2}{y_1(1-y_1)}
\dif y_1\dif y^{\prime}.
\end{align*}
Choosing $\delta, \nu, \varepsilon_1$ small enough, we have
\begin{align*}
&B_1(t)+B_2(t)+B_3(t)-\mathcal{G}_1(t)-G_4(t)-\frac{5}{8}D(t)\\
\le&
\frac{11}{10}\delta\alpha_-\int_{\mathbb{T}^2}\int_0^1
w^2\dif y_1\dif y^{\prime}
-\frac{9}{16}\delta\alpha_-\int_{\mathbb{T}^2}\int_0^1y_1(1-y_1)|\partial_{y_1}w|^2
\dif y_1\dif y^{\prime}\\
& -\frac{9}{16}\frac{(\frac{2}{3}\mu+\lambda)^2}{p^{\prime\prime}(v_-)}\frac{\sigma_-}{\delta} \int_{\mathbb{T}^2}\int_0^1
\frac{|\nabla_{y^\prime}w|^2}{y_1(1-y_1)}
\dif y_1\dif y^{\prime}
+C\frac{\delta}{\nu}G_3(t).
\end{align*}
Using 3-D Poincar\'e inequality in Lemma \ref{poinbds}, we derive that
\begin{align*}
&B_1(t)+B_2(t)+B_3(t)-\mathcal{G}_1(t)-G_4(t)-\frac{5}{8}D(t)\\
\le&
\frac{11}{10}\delta\alpha_-\int_{\mathbb{T}^2}\int_0^1
w^2\dif y_1\dif y^{\prime}
-\frac{9}{8}\delta\alpha_-\int_{\mathbb{T}^2}\int_0^1|w-\bar w|^2
\dif y_1\dif y^{\prime}
+\frac{5\delta\alpha_-}{64\pi}\int_{\mathbb{T}^2}\int_0^1
\frac{|\nabla_{y^\prime}w|^2}{y_1(1-y_1)}
\dif y_1\dif y^{\prime}\\
&-\frac{9}{16}\frac{(\frac{4}{3}\mu+\lambda)^2}{p^{\prime\prime}(v_-)}\frac{\sigma_-}{\delta} \int_{\mathbb{T}^2}\int_0^1
\frac{|\nabla_{y^\prime}w|^2}{y_1(1-y_1)}
\dif y_1\dif y^{\prime}
+C\frac{\delta}{\nu}G_3(t),
\end{align*}
where $\bar w=\int_{\mathbb{T}^2}\int_0^1
w\dif y_1\dif y^{\prime}$.
Noting that
\[
\int_{\mathbb{T}^2}\int_0^1|w-\bar w|^2
\dif y_1\dif y^{\prime}
=\int_{\mathbb{T}^2}\int_0^1w^2
\dif y_1\dif y^{\prime}
-\bar w^2,
\]
then, we have
\begin{equation}\label{bdg}
\begin{aligned}
&B_1(t)+B_2(t)+B_3(t)-\mathcal{G}_1(t)-G_4(t)-\frac{5}{8}D(t)\\
\le&
-\frac{\delta\alpha_-}{40}\int_{\mathbb{T}^2}\int_0^1
w^2\dif y_1\dif y^{\prime}
-\frac{9}{8}\delta\alpha_-\left(\int_{\mathbb{T}^2}\int_0^1w
\dif y_1\dif y^{\prime}\right)^2\\
&
-\frac{9}{16}\left(\frac{(\frac{4}{3}\mu+\lambda)^2}{p^{\prime\prime}(v_-)}\frac{\sigma_-}{\delta}
-\frac{\delta\alpha_-}{5\pi}
\right)
\int_{\mathbb{T}^2}\int_0^1
\frac{|\nabla_{y^\prime}w|^2}{y_1(1-y_1)}
\dif y_1\dif y^{\prime}
+C\frac{\delta}{\nu}G_3(t).
\end{aligned}
\end{equation}
Combining \eqref{x1}, \eqref{g111}, \eqref{bdg} and choosing $M=\frac{9}{8}\alpha_-\sigma_-^4v_-^2$ and $\delta\le(\frac{4}{3}\mu+\lambda)\sqrt{\frac{5\pi\sigma_-}{p^{\prime\prime}(v_-)\alpha_-}}$, we can deduce that
\[
\begin{aligned}
-&\frac{\delta}{2M}|\dot X(t)|^2+B_1(t)+B_2(t)+B_3(t)-\mathcal{G}_1(t)-G_4(t)-\frac{5}{8}D(t)\\
&\le
-\frac{\delta\alpha_-}{80}\int_{\mathbb{T}^2}\int_0^1
w^2\dif y_1\dif y^{\prime}
+C\int_{\mathbb{T}^2}\int_{\mathbb{R}}a^{-X}_{\xi_1} |p(v)-p((v^s)^{-X})|^3\dif \xi_1\dif \xi^{\prime}
+C\frac{\delta}{\nu}G_3(t).
\end{aligned}
\]

Then, the proof of this lemma is finished.
\end{proof}

Next, we prove the $L^2$ estimate by estimating the relative entropy quantity $\eta(t, \xi)$ in following lemma.
\begin{lemma}\label{le0}
Under the hypotheses of Proposition \ref{p1}, there exists constant $C>0$ independent of $\tau, \nu, \delta, \varepsilon_1, T$, such that for $t\in[0, T]$, we have
\begin{equation}\label{ljgu1.1}
\begin{aligned}
& \int_{\mathbb{T}^2}\int_{\mathbb{R}}\rho\eta(t,\xi)\dif \xi_1\dif \xi^{\prime}
+\frac{\delta}{4M}\int_0^t|\dot X(t)|^2\dif t
+(1-C\varepsilon_1)\int_0^tG_2(t)\dif t
+(1-C(\frac{\delta}{\nu}+\delta+\varepsilon_1)\int_0^tG_3(t)\dif t\\
&\quad
+(C_1-C(\varepsilon_1+\sqrt\nu+\delta))\int_0^tG^s(t)\dif t
+(1-C\varepsilon_1)\int_0^t\|(\frac{\Pi_1-(\Pi_1^s)^{-X}}{2\mu}, \frac{\Pi_2-(\Pi_2^s)^{-X}}{\lambda})\|_{L^2}^2\dif t\\
&\le C\left(\|v_0-v^s\|_{L^2}^2+\|u_0-u^s\|_{L^2}^2+\tau\|\Pi_{10}-\Pi_1^s\|_{L^2}^2
+\tau\|\Pi_{20}-\Pi_2^s\|_{L^2}^2\right)\\
&\quad+(\frac{5\mu}{6}+\frac{5\lambda}{8})(1+\chi+C(\nu+\varepsilon_1))\int_0^t\int_{\mathbb{T}^2}\int_{\mathbb{R}}
|p^\prime(v)||\nabla(v-(v^s)^{-X})|^2
\dif \xi_1\dif \xi^{\prime}\dif t,
\end{aligned}
\end{equation}
where the terms $G_2(t),  G_{3}(t)$ are defined in Lemma \ref{le4.5}, the parameter $0<\chi\ll 1$ chosen to be sufficiently small later, and
\[
G^s(t)=\int_{\mathbb{T}^2}\int_{\mathbb{R}}(v^s)^{-X}_{\xi_1}|p(v)-p((v^s)^{-X})|^2\dif \xi_1\dif \xi^{\prime}.
\]
\end{lemma}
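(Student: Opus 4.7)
The plan is to integrate the relative entropy identity of Lemma~\ref{le4.5} in time and arrange the various terms so that Lemma~\ref{le4.6} absorbs the most delicate quadratic-in-$w$ interactions. First I would use the shift ODE \eqref{4.10}--\eqref{4.11} to rewrite $\dot X(t)Y(t)=-\tfrac{\delta}{M}|\dot X(t)|^2+\dot X(t)\sum_{i=3}^{9}Y_i(t)$, split $\tfrac{\delta}{M}|\dot X|^2=\tfrac{\delta}{2M}|\dot X|^2+\tfrac{\delta}{2M}|\dot X|^2$ and keep one half on the left. To activate the Poincar\'e inequality of Lemma~\ref{poinbds}, I would artificially add and subtract $\tfrac{5}{8}D(t)$: the negative copy combines with $-\tfrac{\delta}{2M}|\dot X|^2+B_1+B_2+B_3-G_1-G_4$ into precisely the expression bounded by Lemma~\ref{le4.6}, which yields the good dissipation $C_1 G^s(t)$ on the left-hand side at the cost of the cubic error $C\int a^{-X}_{\xi_1}|p(v)-p((v^s)^{-X})|^3$ and a small fraction $C\tfrac{\delta}{\nu}G_3$. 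The unmatched $+\tfrac{5}{8}D(t)$ is left on the right-hand side and becomes, after rewriting $\nabla(p(v)-p((v^s)^{-X}))\approx p'(v)\nabla(v-(v^s)^{-X})$ modulo small shock-profile errors, precisely the $(\tfrac{5\mu}{6}+\tfrac{5\lambda}{8})(1+\chi+C(\nu+\varepsilon_1))\int|p'(v)||\nabla(v-(v^s)^{-X})|^2$ term that appears in \eqref{ljgu1.1}.

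The remaining work is to control the other bad terms $B_4,\ldots,B_{10}$, the cubic error and the shift cross terms $\dot X(t)Y_i(t)$ for $i=3,\ldots,9$. Every $B_i$ carries either a factor $a^{-X}_{\xi_1}$ (which produces a factor $\nu$ via \eqref{4.12}), a factor $F$ which by \eqref{wcxf} is linear in the perturbation, or a shock-profile derivative $(v^s)^{-X}_{\xi_1},(\Pi^s_{1,\xi_1}),(\Pi^s_{2,\xi_1})$, all exponentially small by Lemma~\ref{pvsw}. Applying Cauchy--Schwarz and Young's inequality, each such term is bounded by a small multiple of one of the good dissipations $G_2,G_3,G_5,G_6,G_7,G_8$, of $G^s(t)$, or of $\tfrac{\delta}{2M}|\dot X|^2$, and then absorbed into the left-hand side. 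The cubic term is estimated by Sobolev embedding (Lemma~\ref{czbds1}) as $\int a^{-X}_{\xi_1}|w|^3\leq \|w\|_{L^{\infty}}\int a^{-X}_{\xi_1} w^2\leq C\varepsilon_1\int a^{-X}_{\xi_1} w^2$, and since $a^{-X}_{\xi_1}$ is comparable with $\tfrac{\nu}{\delta}(v^s)^{-X}_{\xi_1}$ this is absorbed into $G^s(t)$ after choosing $\varepsilon_1$ small. For $\dot X(t)Y_i(t)$ with $i\geq 3$, one checks from the definitions that each $Y_i$ is bounded by a small constant times the square root of a good dissipation (e.g. $\sqrt{G_2}$, $\sqrt{G^s}$ or $\sqrt{\tau G_5}$), so Young's inequality splits $\dot X Y_i$ between $\tfrac{\delta}{2M}|\dot X|^2$ and the corresponding good dissipation.

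The main obstacle is the bookkeeping of smallness constants: the absorption of $C\tfrac{\delta}{\nu}G_3$ coming from Lemma~\ref{le4.6} requires $\delta/\nu\ll 1$, while the proof of Lemma~\ref{le4.6} itself needs $\nu\ll 1$, which forces the scaling $\delta\ll\nu\leq C\sqrt{\delta}$ already built into the weight. A second delicate point is that the unmatched $\tfrac{5}{8}D(t)$ genuinely \emph{cannot} be absorbed at this stage of the argument, since the relaxed system does not provide a direct $\nabla v$ dissipation from the momentum equation alone; it is essential to keep it on the right-hand side, where it will later be controlled by the first-order energy estimate combined with the smallness of $\tau$ and $\varepsilon_1$, as outlined in the introduction. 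Once all other terms are absorbed and the equivalence $\int a^{-X}\rho\,\eta\sim\|(v-(v^s)^{-X},u-(u^s)^{-X},\sqrt\tau(\Pi_1-(\Pi_1^s)^{-X}),\sqrt\tau(\Pi_2-(\Pi_2^s)^{-X}))\|_{L^2}^2$ is used (valid since $1\leq a^{-X}\leq 1+C\nu$, $\rho\sim 1/v_\pm$ and $H(v|(v^s)^{-X})\sim(v-(v^s)^{-X})^2$), time integration yields precisely \eqref{ljgu1.1}.
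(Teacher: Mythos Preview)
Your overall scheme matches the paper's proof almost step for step: start from Lemma~\ref{le4.5}, use \eqref{4.11} to split off $-\tfrac{\delta}{2M}|\dot X|^2$, add and subtract $\tfrac58 D(t)$ so that Lemma~\ref{le4.6} applies, then control $B_4,\dots,B_{10}$, the cubic remainder, and $\dot X\sum_{i\ge 3}Y_i$ by Young's inequality against the good terms $G_2,\dots,G_8,G^s$, and finally convert the leftover $\tfrac58 D(t)$ into the $|p'(v)||\nabla(v-(v^s)^{-X})|^2$ integral. This is exactly the route taken in the paper.

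There is, however, one genuine gap in your treatment of the cubic term. Your bound
\[
\int a^{-X}_{\xi_1}|w|^{3}\;\le\;\|w\|_{L^\infty}\int a^{-X}_{\xi_1}|w|^{2}
\;\le\;C\varepsilon_1\cdot\frac{\nu}{\delta}\,G^s(t)
\]
is \emph{not} absorbable: since $\delta\ll\nu$ the factor $\nu/\delta$ is large (of order $\delta^{-1/2}$ when $\nu\sim\sqrt\delta$), and $\varepsilon_1$ is independent of $\delta$, so you cannot make $C\varepsilon_1\,\nu/\delta$ smaller than $C_1$. The paper avoids this by a sharper splitting: pull out \emph{two} factors of $w$ in $L^\infty$ and use the Gagliardo--Nirenberg inequality of Lemma~\ref{czbds1} to write $\|w\|_{L^\infty}^{2}\lesssim\|w\|_{H^2}\|\nabla_\xi w\|_{L^2}$; then the remaining linear factor $\int(v^s)^{-X}_{\xi_1}|w|$ is handled by Cauchy--Schwarz, producing $\bigl(\int(v^s)^{-X}_{\xi_1}\bigr)^{1/2}\sim\sqrt\delta$ which cancels the dangerous $\nu/\delta$ down to $\nu/\sqrt\delta\le C$. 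The outcome is
\[
K(t)\;\le\;C\varepsilon_1\bigl(D(t)+G^s(t)\bigr),
\]
with a genuinely small coefficient; the extra $C\varepsilon_1 D(t)$ piece is what produces the $C\varepsilon_1$ inside the factor $(1+\chi+C(\nu+\varepsilon_1))$ multiplying the gradient term on the right of \eqref{ljgu1.1}. With this correction your argument goes through as in the paper.
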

\begin{proof}
Firstly, using Lemma \ref{le4.5}, we have
\begin{align*}
\frac{\dif}{\dif t}&\int_{\mathbb{T}^2}\int_{\mathbb{R}}a^{-X}\rho\eta(t, \xi)\dif \xi_1\dif \xi^{\prime}\\
&=-\frac{\delta}{2M}|\dot{X}(t)|^2+B_1(t)+B_2(t)+B_{3}(t)-G_1(t)-G_4(t)-\frac{5}{8}D(t)\\
&\quad -\frac{\delta}{2M}|\dot{X}(t)|^2+\dot{X}(t)\sum\limits_{i=3}\limits^9Y_i(t)
+\sum\limits_{i=4}\limits^{10}B_i(t)-G_2(t)-G_3(t)-\sum\limits_{i=5}\limits^8G_i(t)+\frac{5}{8}D(t).
\end{align*}
Using Lemma \ref{le4.6} and Young's inequality, we can obtain that
\begin{align*}
&\frac{\dif}{\dif t}\int_{\mathbb{T}^2}\int_{\mathbb{R}}a^{-X}\rho\left(H(v|(v^s)^{-X})+\frac{|u-(u^s)^{-X}|^2}{2}
\right)\dif \xi_1\dif \xi^{\prime}\\
&\quad+\tau\frac{\dif}{\dif t}\int_{\mathbb{T}^2}\int_{\mathbb{R}}a^{-X}\rho\left(
\frac{|\Pi_1-(\Pi_1^s)^{-X}|^2}{4\mu}+\frac{|\Pi_2-(\Pi_2^s)^{-X}|^2}{2\lambda}\right)\dif \xi_1\dif \xi^{\prime}\\
&=-C_1\underbrace{\int_{\mathbb{T}^2}\int_{\mathbb{R}}(v^s)^{-X}_{\xi_1}|p(v)-p((v^s)^{-X})|^2\dif \xi_1\dif \xi^{\prime}}\limits_{=:G^s(t)}
+C\underbrace{\int_{\mathbb{T}^2}\int_{\mathbb{R}}a^{-X}_{\xi_1}|p(v)-p((v^s)^{-X})|^3\dif \xi_1\dif \xi^{\prime}}\limits_{=:K(t)}
+C\frac{\delta}{\nu}G_3(t)\\
&\quad -\frac{\delta}{4M}|\dot{X}(t)|^2+\frac{C}{\delta}\sum\limits_{i=3}\limits^9|Y_i(t)|^2
+\sum\limits_{i=4}\limits^{10}B_i(t)-G_2(t)-G_3(t)-\sum\limits_{i=5}\limits^8G_i(t)+\frac{5}{8}D(t).
\end{align*}

Next, we control the bad terms $K(t)$, $Y_i(t)(i=3, 4, ..., 9)$ and $B_i(t)(i=4, 5, ..., 10)$.

For $K(t)$, using \eqref{weight1}, \eqref{3.4}, Lemmas \ref{pvsw}, \ref{czbds1}, H\"older inequality and the notation $w=p(v)-p((v^s)^{-X})$, we have
 \begin{align*}
 K(t)&\le C\frac{\nu}{\delta}\|w\|^2_{L^\infty}\int_{\mathbb{T}^2}\int_{\mathbb{R}}(v^s)^{-X}_{\xi_1}|w|\dif \xi_1\dif \xi^{\prime}\\
 &\le C\frac{\nu}{\delta}(\|w\|_{L^2}\|\partial_{\xi_1}w\|_{L^2}
 +\|\nabla_\xi w\|_{L^2}\|\nabla_\xi^2 w\|_{L^2})\left(\int_{\mathbb{R}}(v^s)^{-X}_{\xi_1}|w|^2\dif \xi_1\dif \xi^{\prime}\right)^{\frac{1}{2}}\left(\int_{\mathbb{R}}(v^s)^{-X}_{\xi_1}\dif \xi_1\dif \xi^{\prime}\right)^{\frac{1}{2}}\\
 &\le C\frac{\nu}{\sqrt{\delta}}(\|w\|_{L^2}
 +\|\nabla_\xi^2 w\|_{L^2})\|\nabla_\xi w\|_{L^2}\left(\int_{\mathbb{R}}(v^s)^{-X}_{\xi_1}|w|^2\dif \xi_1\dif \xi^{\prime}\right)^{\frac{1}{2}}\\
 &\le C\varepsilon_1(D(t)+G^s(t)).
 \end{align*}

For $Y_3(t)$, using \eqref{weight1} and H\"older inequality, we can get
 \begin{align*}
Y_3(t)&\le C\frac{\delta}{\nu}\int_{\mathbb{T}^2}\int_{\mathbb{R}}a^{-X}_{\xi_1}
\Big|u_1-(u_1^s)^{-X}-\frac{p(v)-p((v^s)^{-X})}{\sigma_\ast}\Big|\dif \xi_1\dif\xi^{\prime}\\
&\le C\frac{\delta}{\nu}\left(\int_{\mathbb{T}^2}\int_{\mathbb{R}}a^{-X}_{\xi_1}
\Big|u_1-(u_1^s)^{-X}-\frac{p(v)-p((v^s)^{-X})}{\sigma_\ast}\Big|^2\dif \xi_1\dif\xi^{\prime}\right)^{\frac{1}{2}}
\left(\int_{\mathbb{T}^2}\int_{\mathbb{R}}a^{-X}_{\xi_1}\dif \xi_1\dif\xi^{\prime}\right)^{\frac{1}{2}}\\
&\le C\frac{\delta}{\sqrt{\nu}}\sqrt{G_3(t)}.
\end{align*}
Similarly, for $Y_4(t)$, by using \eqref{3.4} and Sobolve's embedding theorem, we can obtain that
\[
Y_4(t)\le C\varepsilon_1\int_{\mathbb{T}^2}\int_{\mathbb{R}}a^{-X}_{\xi_1}
\Big|u_1-(u_1^s)^{-X}-\frac{p(v)-p((v^s)^{-X})}{\sigma_\ast}\Big|\dif \xi_1\dif\xi^{\prime}
\le C\varepsilon_1\sqrt{\nu}\sqrt{G_3(t)}.
\]
For $Y_5(t)$, using \eqref{weight1}, \eqref{3.4}, Young's inequality and Lemma \ref{pvsw}, we can get
\begin{align*}
\frac{C}{\delta}|Y_5(t)|^2&\le \frac{C\nu^2}{\delta^3}\left(\int_{\mathbb{T}^2}\int_{\mathbb{R}}(v^s)^{-X}_{\xi_1}|p(v)-p((v^s)^{-X})|^2\dif \xi_1\dif \xi^{\prime}\right)^2
+\frac{C}{\delta}
\left(\int_{\mathbb{T}^2}\int_{\mathbb{R}}a^{-X}_{\xi_1}
\frac{u_2^2+u_3^2}{2}\dif \xi_1\dif\xi^{\prime}\right)^2\\
&\le \frac{C\nu^2}{\delta}
\int_{\mathbb{T}^2}\int_{\mathbb{R}}|p(v)-p((v^s)^{-X})|^2\dif \xi_1\dif \xi^{\prime}
+\int_{\mathbb{T}^2}\int_{\mathbb{R}}(v^s)^{-X}_{\xi_1}|p(v)-p((v^s)^{-X})|^2\dif \xi_1\dif \xi^{\prime}
\\
&\quad+\frac{C}{\delta}
\int_{\mathbb{T}^2}\int_{\mathbb{R}}\frac{\nu}{\delta}(v^s)^{-X}_{\xi_1}
\frac{(u_1-(u_1^s)^{-X})^2+u_2^2+u_3^2}{2}\dif \xi_1\dif\xi^{\prime}
+\int_{\mathbb{T}^2}\int_{\mathbb{R}}a^{-X}_{\xi_1}
\frac{u_2^2+u_3^2}{2}\dif \xi_1\dif\xi^{\prime}\\
&\le C\varepsilon_1^2G^s(t)+C\nu\varepsilon_1^2G_2(t).
\end{align*}
For $Y_6(t)$ and $Y_7(t)$, using \eqref{weight1}, \eqref{3.4}, Sobolev's embedding theorem and H\"older inequality, we imply that
\begin{align*}
Y_6(t)&\le C\|\sqrt{\tau}(\Pi_1-(\Pi_1^s)^{-X})\|_{L^\infty}
\left(\int_{\mathbb{T}^2}\int_{\mathbb{R}}a^{-X}_{\xi_1}\dif \xi_1\dif\xi^{\prime}\right)^{\frac{1}{2}}
\left(\int_{\mathbb{T}^2}\int_{\mathbb{R}}a^{-X}_{\xi_1}
\frac{\tau|\Pi_1-(\Pi_1^s)^{-X}|^2}{2\mu}\dif \xi_1\dif\xi^{\prime}\right)^{\frac{1}{2}}\\
&\le C\|\sqrt{\tau}(\Pi_1-(\Pi_1^s)^{-X})\|_{H^2}\sqrt{\nu}\sqrt{G_5(t)}\\
&\le C\varepsilon_1\sqrt{\nu}\sqrt{G_5(t)},
\end{align*}
and
\[
Y_7(t)\le C\varepsilon_1\sqrt{\nu}\sqrt{G_7(t)}.
\]
For $Y_8(t)$ and $Y_9(t)$, using H\"older inequality and Lemma \ref{pvsw}, we can deduce that
\begin{align*}
Y_8(t)&\le C\left(\int_{\mathbb{T}^2}\int_{\mathbb{R}}a^{-X} \frac{|\Pi_1-(\Pi_1^s)^{-X}|^2}{\mu}\dif \xi_1\dif \xi^{\prime}\right)^{\frac{1}{2}}
\left(\int_{\mathbb{T}^2}\int_{\mathbb{R}}|\left((\Pi_1^s)^{-X}\right)_{\xi_1}|^2\dif \xi_1\dif \xi^{\prime}\right)^{\frac{1}{2}}\\
&\le C\delta^2\sqrt{G_6(t)},
\end{align*}
and
\[
Y_9(t)\le C\delta^2\sqrt{G_8(t)}.
\]

For $B_4(t)$, using \eqref{weight1}, Young's inequality and Lemma \ref{pvsw}, we have
\begin{align*}
B_4(t)&\le C\|a^{-X}_{\xi_1}\|^{\frac{1}{2}}
\left(\int_{\mathbb{T}^2}\int_{\mathbb{R}}a^{-X}_{\xi_1}|u_1-(u_1^s)^{-X}|^2\dif \xi_1\dif \xi^{\prime}+
\int_{\mathbb{T}^2}\int_{\mathbb{R}}a^{-X}\frac{|\Pi_2-(\Pi_2^s)^{-X}|^2}{\lambda}\dif \xi_1\dif \xi^{\prime}\right)\\
&\le C\sqrt{\nu\delta}\int_{\mathbb{T}^2}\int_{\mathbb{R}}a^{-X}_{\xi_1}\Big|u_1-(u_1^s)^{-X}
-\frac{p(v)-p((v^s)^{-X})}{\sigma_\ast}\Big|^2\dif \xi_1\dif \xi^{\prime}\\
&\quad+
C\sqrt{\nu\delta}\int_{\mathbb{T}^2}\int_{\mathbb{R}}a^{-X}_{\xi_1}\Big|\frac{p(v)-p((v^s)^{-X})}{\sigma_\ast}\Big|^2\dif \xi_1\dif \xi^{\prime}
 +C\sqrt{\nu\delta}G_8(t)\\
&\le C\sqrt{\nu\delta}(G_3(t)+G_8(t))+C\sqrt{\nu}G^s(t).
\end{align*}
In a similar way, for $B_5(t)$, we can get
\begin{align*}
B_5(t)&\le C\|a^{-X}_{\xi_1}\|^{\frac{1}{2}}\left(\int_{\mathbb{T}^2}\int_{\mathbb{R}}a^{-X}_{\xi_1}|u_1-(u_1^s)^{-X}|^2\dif \xi_1\dif \xi^{\prime}+
\int_{\mathbb{T}^2}\int_{\mathbb{R}}a^{-X}\frac{|\Pi_{11}-(\Pi_{11}^s)^{-X}|^2}{\mu}\dif \xi_1\dif \xi^{\prime}\right)\\
&\quad+C\|a^{-X}_{\xi_1}\|^{\frac{1}{2}}\left(\int_{\mathbb{T}^2}\int_{\mathbb{R}}a^{-X}_{\xi_1}
\frac{u_2^2+u_3^2}{2}\dif \xi_1\dif \xi^{\prime}+
\int_{\mathbb{T}^2}\int_{\mathbb{R}}a^{-X}\frac{|\Pi_{21}|^2+|\Pi_{31}|^2}{\mu}\dif \xi_1\dif \xi^{\prime}\right)\\
&\le C\sqrt{\nu\delta}(G_3(t)+G_6(t))+C\sqrt{\nu}G^s(t).
\end{align*}
For $B_6(t)$, using \eqref{weight1}, Lemma \ref{pvsw} and Young's inequality, we obtain that
\begin{align*}
B_6(t)&\le C\delta\int_{\mathbb{T}^2}\int_{\mathbb{R}}(v^s)^{-X}_{\xi_1}|p(v)-p((v^s)^{-X})||u_1-(u_1^s)^{-X}|\dif \xi_1\dif \xi^{\prime}\\
&\le C\delta\int_{\mathbb{T}^2}\int_{\mathbb{R}}(v^s)^{-X}_{\xi_1}|p(v)-p((v^s)^{-X})|^2\dif \xi_1\dif \xi^{\prime}\\
&\quad+C\delta\int_{\mathbb{T}^2}\int_{\mathbb{R}}(v^s)^{-X}_{\xi_1}\Big|u_1-(u_1^s)^{-X}
-\frac{p(v)-p((v^s)^{-X})}{\sigma_\ast}\Big|^2\dif \xi_1\dif \xi^{\prime}\\
&\le C\delta G^s(t)+C\frac{\delta^2}{\nu}G_3(t).
\end{align*}
In a similar way, for $B_7(t)$, we can deduce that
\begin{align*}
B_7(t)
&\le C\delta\int_{\mathbb{T}^2}\int_{\mathbb{R}}(v^s)^{-X}_{\xi_1}|p(v)-p((v^s)^{-X})|^2\dif \xi_1\dif \xi^{\prime}\\
&\quad+C\delta\int_{\mathbb{T}^2}\int_{\mathbb{R}}(v^s)^{-X}_{\xi_1}\Big|u_1-(u_1^s)^{-X}
-\frac{p(v)-p((v^s)^{-X})}{\sigma_\ast}\Big|^2\dif \xi_1\dif \xi^{\prime}\\
&\le C\delta G^s(t)+C\frac{\delta^2}{\nu}G_3(t).
\end{align*}
For $B_8(t)$, using the definition of $\eta(t, \xi)$ and Lemma 2.1 in \cite{WY}, we can derive that
\begin{align*}
B_8(t)
&= \int_{\mathbb{T}^2}\int_{\mathbb{R}}a^{-X}_{\xi_1}F\left(H(v|(v^s)^{-X})+\frac{|u-(u^s)^{-X}|^2}{2}
\right)\dif \xi_1\dif \xi^{\prime}\\
&\quad+\int_{\mathbb{T}^2}\int_{\mathbb{R}}a^{-X}_{\xi_1}F\left(
\frac{\tau|\Pi_1-(\Pi_1^s)^{-X}|^2}{4\mu}+\frac{\tau|\Pi_2-(\Pi_2^s)^{-X}|^2}{2\lambda}\right)\dif \xi_1\dif \xi^{\prime}\\
&\le C\int_{\mathbb{T}^2}\int_{\mathbb{R}}a^{-X}_{\xi_1}|F|\left(|p(v)-p((v^s)^{-X})|^2+
\Big|u-(u^s)^{-X}-\frac{p(v)-p((v^s)^{-X})}{\sigma_\ast}\Big|^2+\frac{u_2^2+u_3^2}{2}
\right)\dif \xi_1\dif \xi^{\prime}\\
&\quad+\int_{\mathbb{T}^2}\int_{\mathbb{R}}a^{-X}_{\xi_1}|F|\left(
\frac{\tau|\Pi_1-(\Pi_1^s)^{-X}|^2}{4\mu}+\frac{\tau|\Pi_2-(\Pi_2^s)^{-X}|^2}{2\lambda}\right)\dif \xi_1\dif \xi^{\prime}.
\end{align*}
On the one hand, using the definition of $F$ and Sobolev's embedding theorem, we know that
\begin{align*}
\|F\|_{L^\infty}
\le C\left(\|v-(v^s)^{-X}\|_{H^2}+\|u_1-(u_1^s)^{-X}\|_{H^2}\right)\le C\varepsilon_1,
\end{align*}
then, we have
\begin{align*}
&\int_{\mathbb{T}^2}\int_{\mathbb{R}}a^{-X}_{\xi_1}|F|\left(
\Big|u-(u^s)^{-X}-\frac{p(v)-p((v^s)^{-X})}{\sigma_\ast}\Big|^2+\frac{u_2^2+u_3^2}{2}
\right)\dif \xi_1\dif \xi^{\prime}\\
&\quad+\int_{\mathbb{T}^2}\int_{\mathbb{R}}a^{-X}_{\xi_1}|F|\left(
\frac{\tau|\Pi_1-(\Pi_1^s)^{-X}|^2}{2\mu}+\frac{\tau|\Pi_2-(\Pi_2^s)^{-X}|^2}{2\lambda}\right)\dif \xi_1\dif \xi^{\prime}\\
&\le C\varepsilon_1\left(G_2(t)+G_3(t)+G_5(t)+G_7(t)\right).
\end{align*}
On the other hand,
\begin{align*}
&\int_{\mathbb{T}^2}\int_{\mathbb{R}}a^{-X}_{\xi_1}|F||p(v)-p((v^s)^{-X})|^2\dif \xi_1\dif \xi^{\prime}\\
&\le C\int_{\mathbb{T}^2}\int_{\mathbb{R}}a^{-X}_{\xi_1}|p(v)-p((v^s)^{-X})|^3\dif \xi_1\dif \xi^{\prime}
+C\int_{\mathbb{T}^2}\int_{\mathbb{R}}a^{-X}_{\xi_1}|u_1-(u_1^s)^{-X}||p(v)-p((v^s)^{-X})|^2\dif \xi_1\dif \xi^{\prime}\\
&\le
C\underbrace{\int_{\mathbb{T}^2}\int_{\mathbb{R}}a^{-X}_{\xi_1} \Big|u-(u^s)^{-X}-\frac{p(v)-p((v^s)^{-X})}{\sigma_\ast}\Big||p(v)-p((v^s)^{-X})|^2\dif \xi_1\dif \xi^{\prime}}\limits_{=:K_1}\\
&\quad+C\underbrace{\int_{\mathbb{T}^2}\int_{\mathbb{R}}a^{-X}_{\xi_1}|p(v)-p((v^s)^{-X})|^3\dif \xi_1\dif \xi^{\prime}}\limits_{=:K_2}.
\end{align*}
Following an analogous method to that used for $K(t)$, we derive that
\begin{align*}
K_1&\le C\|p(v)-p((v^s)^{-X})\|^2_{L^\infty}\sqrt{G_3(t)}\left(\int_{\mathbb{T}^2}\int_{\mathbb{R}}\frac{\nu}{\delta}
(v^s)^{-X}_{\xi_1} \dif \xi_1\dif \xi^{\prime}\right)^{\frac{1}{2}}\\
&\le C(\|p(v)-p((v^s)^{-X})\|_{L^2}
 +\|\nabla_\xi^2 (p(v)-p((v^s)^{-X}))\|_{L^2})\|\nabla_\xi (p(v)-p((v^s)^{-X}))\|_{L^2}\sqrt{G_3(t)}\sqrt{\nu}\\
 &\le C\varepsilon_1\sqrt{\nu}(D(t)+G_3(t)).
\end{align*}
and
\[
K_2\le C\varepsilon_1(D(t)+G^s(t)).
\]
Therefore, we conclude that
\[
B_8(t)\le C\varepsilon_1\left(G_2(t)+G_3(t)+G_5(t)+G_7(t)+D(t)+G^s(t)\right).
\]
For $B_9(t)$ and $B_{10}(t)$, using \eqref{weight1}, Lemma \ref{pvsw} and Young's inequality, we imply that
\begin{align*}
B_9(t)&\le C\delta\int_{\mathbb{T}^2}\int_{\mathbb{R}}(v^s)^{-X}_{\xi_1}\left(|v-(v^s)^{-X}|+|u_1-(u_1^s)^{-X}|\right)
|\Pi_1-(\Pi_1^s)^{-X}|\dif \xi_1\dif \xi^{\prime}\\
&\le
C\delta\int_{\mathbb{T}^2}\int_{\mathbb{R}}(v^s)^{-X}_{\xi_1}|\Big|u-(u^s)^{-X}-\frac{p(v)-p((v^s)^{-X})}{\sigma_\ast}\Big|
|\Pi_1-(\Pi_1^s)^{-X}|\dif \xi_1\dif \xi^{\prime}\\
&\quad +C\delta\int_{\mathbb{T}^2}\int_{\mathbb{R}}(v^s)^{-X}_{\xi_1}|p(v)-p((v^s)^{-X})|
|\Pi_1-(\Pi_1^s)^{-X}|\dif \xi_1\dif \xi^{\prime}\\
&\le C\delta\int_{\mathbb{T}^2}\int_{\mathbb{R}}(v^s)^{-X}_{\xi_1}|\Big|u-(u^s)^{-X}-\frac{p(v)-p((v^s)^{-X})}{\sigma_\ast}\Big|^2
\dif \xi_1\dif \xi^{\prime}
+C\delta\int_{\mathbb{T}^2}\int_{\mathbb{R}}(v^s)^{-X}_{\xi_1}|p(v)-p((v^s)^{-X})|^2
\dif \xi_1\dif \xi^{\prime}\\
&\quad+C\delta\int_{\mathbb{T}^2}\int_{\mathbb{R}}a^{-X}(v^s)^{-X}_{\xi_1}
\frac{|\Pi_1-(\Pi_1^s)^{-X}|^2}{\mu}\dif \xi_1\dif \xi^{\prime}\\
&\le C\frac{\delta^2}{\nu}G_3(t)+C\delta(G_6(t)+G^s(t)).
\end{align*}
Similarly, for $B_{10}(t)$, we have
\[
B_{10}(t)\le C\frac{\delta^2}{\nu}G_3(t)+C\delta(G_8(t)+G^s(t)).
\]

For $D(t)$, by using Lemma \ref{pvsw}, we get
\begin{align*}
D(t)&= (\frac{4\mu}{3}+\lambda)\int_{\mathbb{T}^2}\int_{\mathbb{R}}
\frac{a^{-X}}{\gamma p^{1+\frac{1}{\gamma}}(v)}\left(|p^\prime(v)||\nabla(v-(v^s)^{-X})|
+|p^\prime(v)-p^\prime((v^s)^{-X})||\nabla((v^s)^{-X})|\right)^2
\dif \xi_1\dif \xi^{\prime}\\
&\le (\frac{4\mu}{3}+\lambda)(1+C\nu)(1+\chi)\int_{\mathbb{T}^2}\int_{\mathbb{R}}
|p^\prime (v)||\nabla(v-(v^s)^{-X})|^2
\dif \xi_1\dif \xi^{\prime}
+C\delta^2G^s(t).
\end{align*}

By combining the above estimates and applying Lemma 2.1 in \cite{WY}, the proof of this lemma is completed.
\end{proof}

\subsection{High-order energy estimates}

In this section, we will show the high-order energy estimates for the system \eqref{sys1}. First of all, we have the following lemma.
\begin{lemma}\label{legj}
Under the hypotheses of Proposition \ref{p1}, there exists constant $C>0$ independent of $\tau, \nu, \delta, \varepsilon_1, T$, such that for $t\in[0, T]$, we have
\begin{equation}\label{gjgj1.1}
\begin{aligned}
&\|\nabla_\xi(v-(v^s)^{-X})\|_{H^2}^2+\|\nabla_\xi(u-(u^s)^{-X})\|_{H^2}^2
+\tau\|\nabla_\xi(\Pi_1-(\Pi_1^s)^{-X})\|_{H^2}^2
+\tau\|\nabla_\xi(\Pi_2-(\Pi_2^s)^{-X})\|_{H^2}^2\\
&\quad
+\int_0^t\|(\nabla_\xi(\Pi_1-(\Pi_1^s)^{-X}), \nabla_\xi(\Pi_2-(\Pi_2^s)^{-X}))\|_{H^2}\dif t\\
&\le C\Big(\|\nabla_\xi(v_0-v^s)\|_{H^2}^2+\|\nabla_\xi(u_0-u^s)\|_{H^2}^2
+\tau\|\nabla_\xi(\Pi_{10}-\Pi_1^s)\|_{H^2}^2
+\tau\|\nabla_\xi(\Pi_{20}-\Pi_2^s)\|_{H^2}^2\Big)\\
&+C(\varepsilon_1+\delta)\int_0^t
\left(\|\left(\nabla_\xi(v-(v^s)^{-X}), \nabla_\xi(u-(u^s)^{-X})\right)\|_{H^2}^2
+\|(\Pi_1-(\Pi_1^s)^{-X}, \Pi_2-(\Pi_2^s)^{-X})\|_{L^2}
\right)
\dif t\\
&
+C\delta^2\int_0^t|\dot X(t)|^2\dif t
+C(\varepsilon_1+\delta)\int_0^t
G^s(t)
\dif t
+C\delta\int_0^t
G_3(t)
\dif t,
\end{aligned}
\end{equation}
where $G_3(t)$ and $G^s(t)$ are defined in Lemma \ref{le4.5} and Lemma \ref{le0}, respectively.
\end{lemma}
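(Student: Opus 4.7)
The plan is to apply $\partial_\xi^\alpha$ for each multi-index $\alpha$ with $1\le|\alpha|\le 3$ to the perturbation system \eqref{sys1} and perform an energy estimate at each order, then sum the outcomes over $|\alpha|\le 3$. Concretely, for fixed $\alpha$, I would multiply the differentiated $v$-equation by $p'(v)\,\partial_\xi^\alpha(v-(v^s)^{-X})$, the differentiated $u$-equation by $\partial_\xi^\alpha(u-(u^s)^{-X})$, the differentiated $\Pi_1$-equation by $\tfrac{1}{2\mu}\partial_\xi^\alpha(\Pi_1-(\Pi_1^s)^{-X})$, and the differentiated $\Pi_2$-equation by $\tfrac{1}{\lambda}\partial_\xi^\alpha(\Pi_2-(\Pi_2^s)^{-X})$, then integrate over $\Omega$ and add the four identities.

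The four principal terms combine (after using the mass balance to handle the $\rho_t$ and $\ddiv_\xi(\rho u)$ contributions) into the time derivative of
\[
\mathcal{E}_\alpha(t):=\frac12\int_\Omega\rho\Bigl(|p'(v)|\,|\partial_\xi^\alpha(v-(v^s)^{-X})|^2+|\partial_\xi^\alpha(u-(u^s)^{-X})|^2+\tfrac{\tau}{2\mu}|\partial_\xi^\alpha(\Pi_1-(\Pi_1^s)^{-X})|^2+\tfrac{\tau}{\lambda}|\partial_\xi^\alpha(\Pi_2-(\Pi_2^s)^{-X})|^2\Bigr)\dif\xi.
\]
The couplings produced by $\nabla p(v)$, $\ddiv_\xi\Pi_1$, $\nabla\Pi_2$ on the momentum side and by $\mu(\nabla u+(\nabla u)^T-\tfrac23\ddiv_\xi u\,\mathrm I_3)$, $\lambda\ddiv_\xi u$ on the constitutive side cancel after integration by parts, exploiting the symmetric--traceless structure of $\Pi_1^s$ recorded in \eqref{2.4}. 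The relaxation contributions contract with $\partial_\xi^\alpha\Pi_i$ to yield the coercive dissipation $\tfrac1{2\mu}\|\partial_\xi^\alpha(\Pi_1-(\Pi_1^s)^{-X})\|_{L^2}^2+\tfrac1\lambda\|\partial_\xi^\alpha(\Pi_2-(\Pi_2^s)^{-X})\|_{L^2}^2$; summing over $|\alpha|\le 3$ and integrating in time yields the $\nabla\Pi$-dissipation on the left-hand side of \eqref{gjgj1.1}.

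It remains to estimate the error terms, which split into three groups. First, commutators of $\partial_\xi^\alpha$ with the variable coefficients $\rho$ and $\rho u\cdot\nabla_\xi$ are controlled by Moser-type bilinear estimates together with Sobolev embedding and the Gagliardo--Nirenberg inequality of Lemma \ref{czbds1}; invoking the a priori bound \eqref{3.4} places one factor in $L^\infty$ with smallness $\varepsilon_1$, while the remaining $L^2$ factor sits inside the energy. Second, the shift contributions $\dot X\,\partial_{\xi_1}(v^s)^{-X}$, $\dot X\,\partial_{\xi_1}(u^s)^{-X}$ and their $\Pi_i^s$ analogues are bounded by Cauchy--Schwarz using the decay $|\partial_{\xi_1}^k(v^s,u_1^s,\Pi_{11}^s,\Pi_2^s)|\le C\delta\,v^s_{\xi_1}$ from Lemma \ref{pvsw}, producing $C\delta^2|\dot X(t)|^2$ plus pieces absorbable by the energy. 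Third, the shock-source terms $F\,\partial_{\xi_1}(\cdot)^{-X}$, after substituting \eqref{wcxf} to split $F$ into its $v-(v^s)^{-X}$ and $u_1-(u_1^s)^{-X}$ parts, produce a contribution controlled by $(\varepsilon_1+\delta)G^s(t)$ via the pointwise bounds on $(v^s)^{-X}_{\xi_1}$, together with a piece of size $\delta\,G_3(t)$ by writing $u_1-(u_1^s)^{-X}=[u_1-(u_1^s)^{-X}-(p(v)-p((v^s)^{-X}))/\sigma_\ast]+(p(v)-p((v^s)^{-X}))/\sigma_\ast$ and exploiting the weight relation $a^{-X}_{\xi_1}=-(\nu/\delta)p((v^s)^{-X})_{\xi_1}$.

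The main obstacle is the top-order identity at $|\alpha|=3$, where the commutator $[\partial_\xi^3,\rho u\cdot\nabla_\xi]$ produces terms of schematic form $(\nabla^3 u)(\nabla v)$ that cannot be directly absorbed by the $\Pi$-dissipation; they must be rearranged so that two factors sit in $L^\infty$ via Lemma \ref{czbds1} and the remaining $L^2$ factor belongs to the energy norm, whereupon the $\varepsilon_1$-smallness from \eqref{3.4} closes the argument. An equally delicate point is uniformity in $\tau$: any error coming from differentiating the $\tau\rho\partial_t\Pi_i$ or $\tau\rho u\cdot\nabla_\xi\Pi_i$ terms carries an explicit $\tau$-factor, and I would use that $\tau$-factor to bound such errors by $\tau$ times the $\Pi$-dissipation before absorption, so that the final constant $C$ in \eqref{gjgj1.1} is independent of $\tau$. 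Summing over $|\alpha|=1,2,3$, absorbing the small-prefactor contributions into the dissipation and the energy, and integrating from $0$ to $t$ then yields \eqref{gjgj1.1}.
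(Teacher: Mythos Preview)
Your proposal is correct and follows essentially the same approach as the paper's proof: differentiate the perturbation system, test with the symmetrizing multipliers $(-p'(v)\partial_\xi^\alpha\Phi,\ \partial_\xi^\alpha\Psi,\ \tfrac{1}{2\mu}\partial_\xi^\alpha Q_1,\ \tfrac{1}{\lambda}\partial_\xi^\alpha Q_2)$, let the principal cross terms cancel after integration by parts, and control commutators, shift terms, and shock-source terms via \eqref{3.4}, Lemma~\ref{pvsw}, and H\"older/Sobolev bounds. The only organizational difference is that the paper first multiplies \eqref{sys1} through by $v$ (obtaining \eqref{sys3}) so that the energy carries no $\rho$-weight and the cross-term near-cancellations \eqref{rr1}--\eqref{rr3} leave small residuals proportional to $\nabla_\xi v$, whereas you keep the $\rho$-weight and invoke mass balance; note also that your multiplier for the $v$-equation should read $-p'(v)\partial_\xi^\alpha\Phi$ (equivalently $|p'(v)|\partial_\xi^\alpha\Phi$) to match the sign in your energy $\mathcal{E}_\alpha$.
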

\begin{proof}
To simplify the calculations, let $\Phi=v-(v^s)^{-X}, \Psi=u-(u^s)^{-X}, Q_1=\Pi_1-(\Pi_1^s)^{-X}, Q_2=\Pi_2-(\Pi_2^s)^{-X}$ and applying $\partial_\xi^\alpha(|\alpha|=1, 2, 3)$ to the system \eqref{sys1}, we derive that
\begin{equation}\label{sys3}
\begin{cases}
&\partial_t\partial_\xi^\alpha\Phi-\sigma\partial_{\xi_1}\partial_\xi^\alpha\Phi
        + \partial_\xi^\alpha\left(u\cdot\nabla_\xi\Phi\right)
        -\dot{X}(t)\partial_{\xi_1}\partial_\xi^\alpha(v^s)^{-X}
 +\partial_\xi^\alpha\left(vF\partial_{\xi_1}(v^s)^{-X}\right)
 =\partial_\xi^\alpha\left(v\ddiv_\xi\Psi\right),\\
&\partial_t\partial_\xi^\alpha\Psi-\sigma\partial_{\xi_1}\partial_\xi^\alpha\Psi
        +\partial_\xi^\alpha\left( u\cdot\nabla_\xi\Psi\right)
        -\dot{X}(t)\partial_{\xi_1}\partial_\xi^\alpha(u^s)^{-X}
        +\partial_\xi^\alpha\left(vF\partial_{\xi_1}(u^s)^{-X}\right)\\
&\qquad +\partial_\xi^\alpha\left(v\nabla_\xi\left(p(v)-p((v^s)^{-X})\right)\right)
               =\partial_\xi^\alpha\left(v\ddiv_\xi Q_1\right)
               +\partial_\xi^\alpha\left(v\nabla_\xi Q_2\right),\\
&\tau\partial_t\partial_\xi^\alpha Q_1
-\tau\sigma\partial_{\xi_1}\partial_\xi^\alpha Q_1
+\tau\partial_\xi^\alpha\left( u\cdot\nabla_\xi Q_1\right)
-\tau\dot{X}(t)\partial_{\xi_1}\partial_\xi^\alpha(\Pi_1^s)^{-X}
+\tau\partial_\xi^\alpha\left(v F\partial_{\xi_1}(\Pi_1^s)^{-X}\right)
\\
 & \qquad+\partial_\xi^\alpha\left(vQ_1\right)
 =\mu\partial_\xi^\alpha\left(v\Big(\nabla_\xi\Psi+\left(\nabla_\xi\Psi\right)^T
   -\frac{2}{3}\ddiv_\xi\Psi \mathrm I_3\Big)\right),\\
&\tau\partial_t\partial_\xi^\alpha Q_2
-\tau\sigma\partial_{\xi_1}\partial_\xi^\alpha Q_2
+\tau\partial_\xi^\alpha\left( u\cdot\nabla_\xi Q_2\right)
-\tau\dot{X}(t)\partial_{\xi_1}\partial_\xi^\alpha(\Pi_2^s)^{-X}
+\tau \partial_\xi^\alpha\left(vF\partial_{\xi_1}(\Pi_2^s)^{-X}\right)
\\
&\qquad+\partial_\xi^\alpha\left(vQ_2\right)
=\lambda\partial_\xi^\alpha\left(v\ddiv_\xi \Psi\right).
\end{cases}
\end{equation}

Multiplying $\eqref{sys3}_1$ by $-p^\prime(v)\partial_\xi^\alpha\Phi$ and integrating over $\mathbb{R}\times\mathbb{T}^2$, we have
\begin{align*}
\frac{\dif}{\dif t}&\int_{\mathbb{T}^2}\int_{\mathbb{R}}
\frac{-p^\prime(v)}{2}|\partial_\xi^\alpha\Phi|^2\dif \xi_1\dif \xi^{\prime}\\
&=\int_{\mathbb{T}^2}\int_{\mathbb{R}}\left(\frac{-p^{\prime\prime}(v)}{2}(v_t-\sigma v_{\xi_1})|\partial_\xi^\alpha\Phi|^2
\right)\dif \xi_1\dif \xi^{\prime}
+\int_{\mathbb{T}^2}\int_{\mathbb{R}}p^\prime(v)\partial_\xi^\alpha\left(u\cdot\nabla_\xi\Phi\right)\partial_\xi^\alpha\Phi\dif \xi_1\dif \xi^{\prime}\\
&\quad-\dot{X}(t)\int_{\mathbb{T}^2}\int_{\mathbb{R}}p^\prime(v)\partial_{\xi_1}\partial_\xi^\alpha(v^s)^{-X}\partial_\xi^\alpha\Phi\dif \xi_1\dif \xi^{\prime}
+\int_{\mathbb{T}^2}\int_{\mathbb{R}}p^\prime(v)\partial_\xi^\alpha\left(vF\partial_{\xi_1}(v^s)^{-X}\right)\partial_\xi^\alpha\Phi\dif \xi_1\dif \xi^{\prime}\\
&\quad-\int_{\mathbb{T}^2}\int_{\mathbb{R}}p^\prime(v)\partial_\xi^\alpha\left(v\ddiv_\xi\Psi\right)\partial_\xi^\alpha\Phi\dif \xi_1\dif \xi^{\prime}
=:\sum\limits_{i=1}\limits^5R_{1i}.
\end{align*}
Firstly, the definition of $\Phi, \Psi$,  Lemma \ref{pvsw} and Sobolev's embedding theorem yields $\|u\|_{L^\infty} \le C$ and $\|(\partial_{\xi_i}v, \partial_{\xi_i}u)\|_{L^\infty} \le C(\varepsilon_1+\delta)(i=1,2,3)$. Then,
for $R_{11}$, by using $\eqref{3.1}_1$, we get
\begin{equation}\label{r11}
\begin{aligned}
R_{11}&=\int_{\mathbb{T}^2}\int_{\mathbb{R}}
\left(\frac{-p^{\prime\prime}(v)}{2}(-u\cdot\nabla_\xi v+v\ddiv_\xi u)|\partial_\xi^\alpha\Phi|^2\right)
\dif \xi_1\dif \xi^{\prime}\\
&\le C(\|u\|_{L^\infty} \|\nabla_{\xi}v\|_{L^\infty} +\|\ddiv_{\xi}u\|_{L^\infty} )\int_{\mathbb{T}^2}\int_{\mathbb{R}}|\partial_\xi^\alpha\Phi|^2\dif \xi_1\dif \xi^{\prime}\\
&\le C(\delta+\varepsilon_1)\int_{\mathbb{T}^2}\int_{\mathbb{R}}|\partial_\xi^\alpha\Phi|^2\dif \xi_1\dif \xi^{\prime}.
\end{aligned}
\end{equation}

For $R_{12}$, we establish the result for $|\alpha|=3$, the cases $|\alpha|=1$ and $|\alpha|=2$ can be handled similarly. Firstly, we know that $\|(\partial^\alpha_\xi v, \partial^\alpha_\xi u)\|_{L^2}\le C(\delta+\varepsilon_1)$. Subsequently,
using \eqref{3.4}, Lemma \ref{pvsw}, H\"older inequality, Young's inequality and Sobolev's embedding theorem, we have
\begin{align*}
\int_{\mathbb{T}^2}\int_{\mathbb{R}}p^\prime(v)
\left( u\cdot\nabla_\xi\partial_{\xi}^\alpha\Phi\right)\partial_{\xi}^\alpha\Phi\dif \xi_1\dif \xi^{\prime}
&= \int_{\mathbb{T}^2}\int_{\mathbb{R}}
\left(\nabla_\xi p^\prime(v) u+p^\prime(v)\ddiv_\xi u\right)\frac{|\partial_{\xi}^\alpha\Phi|^2}{2}\dif \xi_1\dif \xi^{\prime}\\
&\le C(\varepsilon_1+\delta)\int_{\mathbb{R}}|\partial_\xi^\alpha\Phi|^2\dif \xi_1\dif \xi^{\prime},
\end{align*}
\begin{align*}
\int_{\mathbb{T}^2}\int_{\mathbb{R}}p^\prime(v)
\left(\partial_{\xi_i} u\cdot\nabla_\xi\partial_{\xi}^{\alpha-1}\Phi\right)\partial_{\xi}^\alpha\Phi\dif \xi_1\dif \xi^{\prime}
\le C(\varepsilon_1+\delta)\int_{\mathbb{R}}|\partial_\xi^\alpha\Phi|^2\dif \xi_1\dif \xi^{\prime},
\end{align*}
\begin{align*}
\int_{\mathbb{T}^2}\int_{\mathbb{R}}p^\prime(v)
\left(\partial_{\xi}^{\alpha-1} u\cdot\nabla_\xi\partial_{\xi_i} \Phi\right)\partial_{\xi}^\alpha\Phi\dif \xi_1\dif \xi^{\prime}
&\le C\|\partial_{\xi}^{\alpha-1} u\|_{L^3}\|\nabla_\xi\partial_{\xi_i} \Phi\|_{L^6}\|\partial_{\xi}^\alpha\Phi\|_{L^2}\\
&\le C\|\partial_{\xi}^{\alpha-1} u\|_{H^1}\|\nabla_\xi\partial_{\xi_i} \Phi\|_{H^1}\|\partial_{\xi}^\alpha\Phi\|_{L^2}\\
&\le C(\varepsilon_1+\delta)\|\partial_{\xi}^\alpha\Phi\|_{L^2},
\end{align*}
and
\begin{align*}
\int_{\mathbb{T}^2}\int_{\mathbb{R}}p^\prime(v)
\left(\partial_{\xi}^{\alpha} u\cdot\nabla_\xi \Phi\right)\partial_{\xi}^\alpha\Phi\dif \xi_1\dif \xi^{\prime}
&\le C\|\partial_{\xi}^{\alpha} u\|_{L^2}\|\nabla_\xi\Phi\|_{L^\infty}\|\partial_{\xi}^\alpha\Phi\|_{L^2}\\
&\le C\|\partial_{\xi}^{\alpha} u\|_{L^2}\|\nabla_\xi\Phi\|_{H^2}\|\partial_{\xi}^\alpha\Phi\|_{L^2}\\
&\le C(\varepsilon_1+\delta)\|\nabla_\xi\Phi\|_{H^2},
\end{align*}
Thus, we conclude that
\begin{equation}\label{r12}
R_{12}\le
C(\varepsilon_1+\delta)
\|\nabla_\xi\Phi\|_{H^{|\alpha|-1}}^2
.
\end{equation}
For $R_{13}$, using Lemma \ref{pvsw} and Young's inequality, we get
\begin{equation}\label{r13}
\begin{aligned}
R_{13}&\le C\delta|\dot{X}(t)|^2\int_{\mathbb{T}^2}\int_{\mathbb{R}}|\partial_{\xi_1}\partial_\xi^\alpha(v^s)^{-X}|\dif \xi_1\dif \xi^{\prime}
+\frac{C}{\delta}\int_{\mathbb{T}^2}\int_{\mathbb{R}}|\partial_{\xi_1}\partial_\xi^\alpha(v^s)^{-X}||\partial_\xi^\alpha\Phi|^2\dif \xi_1\dif \xi^{\prime}\\
&\le C\delta^2|\dot{X}(t)|^2+C\delta\int_{\mathbb{T}^2}\int_{\mathbb{R}}|\partial_\xi^\alpha\Phi|^2\dif \xi_1\dif \xi^{\prime}.
\end{aligned}
\end{equation}
For $R_{14}$, first of all, by using \eqref{wcxf}, we know that
\[
vF=\sigma_\ast(v-(v^s)^{-X})+u_1-(u_1^s)^{-X},
\]
then, using Lemma \ref{pvsw} and Young's inequality, we have
\begin{align*}
&\int_{\mathbb{T}^2}\int_{\mathbb{R}}p^\prime(v)vF\left(\partial_\xi^\alpha\partial_{\xi_1}(v^s)^{-X}\right)\partial_\xi^\alpha\Phi\dif \xi_1\dif \xi^{\prime}\\
&\le C\delta\int_{\mathbb{T}^2}\int_{\mathbb{R}}(v^s)^{-X}_{\xi_1}
\left(\Big|u_1-(u_1^s)^{-X}-\frac{p(v)-p((v^s)^{-X})}{\sigma_\ast}\Big|
+\Big|\frac{p(v)-p((v^s)^{-X})}{\sigma_\ast}\Big|\right)
|\partial_\xi^\alpha\Phi|\dif \xi_1\dif \xi^{\prime}
\\
&\le C\frac{\delta^{\frac{3}{2}}}{\sqrt{\nu}}
\int_{\mathbb{T}^2}\int_{\mathbb{R}}\left(a^{-X}_{\xi_1}\right)^{\frac{1}{2}}
|u_1-(u_1^s)^{-X}-\frac{p(v)-p((v^s)^{-X})}{\sigma_\ast}|
|\partial_\xi^\alpha\Phi|\dif \xi_1\dif \xi^{\prime}\\
&\quad+C\delta\int_{\mathbb{T}^2}\int_{\mathbb{R}}(v^s)^{-X}_{\xi_1}
p(v)-p((v^s)^{-X})||\partial_\xi^\alpha\Phi|\dif \xi_1\dif \xi^{\prime}\\
&\le C\delta\left(G_3(t)+G^s(t)
+
\|\partial_\xi^\alpha\Phi\|^2_{L^2}\right).
\end{align*}
and
\begin{align*}
&\int_{\mathbb{T}^2}\int_{\mathbb{R}}p^\prime(v)\Big(\partial_\xi^\alpha\left(vF\partial_{\xi_1}(v^s)^{-X}\right)-vF\left(\partial_\xi^\alpha\partial_{\xi_1}(v^s)^{-X}\right)\Big)\partial_\xi^\alpha\Phi\dif \xi_1\dif \xi^{\prime}\\
&\le C\delta\left(\|\nabla_\xi\Psi\|_{H^{|\alpha|-1}}^2+\|\nabla_\xi\Phi\|_{H^{|\alpha|-1}}^2\right).
\end{align*}
Thus, we obtain that
\begin{equation}\label{r14}
R_{14}\le C\delta\left(\|\nabla_\xi\Psi\|_{H^{|\alpha|-1}}^2+\|\nabla_\xi\Phi\|_{H^{|\alpha|-1}}^2+G_3(t)+G^s(t)\right).
\end{equation}
For $R_{15}$, when $|\alpha|=3$, a direct calculation yields
\begin{align*}
&-\int_{\mathbb{T}^2}\int_{\mathbb{R}}p^\prime(v)\partial_{\xi_i}v\partial_\xi^{\alpha-1}\ddiv_\xi\Psi\partial_\xi^\alpha\Phi\dif \xi_1\dif \xi^{\prime}
\le C(\delta+\varepsilon_1)(\|\partial_\xi^{\alpha-1}\ddiv_\xi\Psi\|_{L^2}^2+\|\partial_\xi^\alpha\Phi\|_{L^2}^2),
\end{align*}
\begin{align*}
-\int_{\mathbb{T}^2}\int_{\mathbb{R}}p^\prime(v)\partial_\xi^{\alpha-1}v\partial_{\xi_i}\ddiv_\xi\Psi\partial_\xi^\alpha\Phi\dif \xi_1\dif \xi^{\prime}
&\le C \|\partial_\xi^{\alpha-1}v\|_{L^3}\|\partial_{\xi_i}\ddiv_\xi\Psi\|_{L^6}
\|\partial_\xi^\alpha\Phi\|_{L^2}\\
&\le C\|\partial_\xi^{\alpha-1}v\|_{H^1}\|\partial_{\xi_i}\ddiv_\xi\Psi\|_{H^1}
\|\partial_\xi^\alpha\Phi\|_{L^2}\\
&\le C(\delta+\varepsilon_1)(\|\partial_{\xi_i}\ddiv_\xi\Psi\|_{H^1}^2+
\|\partial_\xi^\alpha\Phi\|_{L^2}^2),
\end{align*}
and
\begin{align*}
-\int_{\mathbb{T}^2}\int_{\mathbb{R}}p^\prime(v)\partial_\xi^\alpha v\ddiv_\xi\Psi\partial_\xi^\alpha\Phi\dif \xi_1\dif \xi^{\prime}&\le C\|\partial_\xi^\alpha v\|_{L^2}\|\ddiv_\xi\Psi\|_{L^\infty}\|\partial_\xi^\alpha\Phi\|_{L^2}\\
&\le C\|\partial_\xi^\alpha v\|_{L^2}\|\ddiv_\xi\Psi\|_{H^2}\|\partial_\xi^\alpha\Phi\|_{L^2}\\
&\le C(\delta+\varepsilon_1)(\|\ddiv_\xi\Psi\|_{H^2}^2+
\|\partial_\xi^\alpha\Phi\|_{L^2}^2).
\end{align*}
Additionally, the cases $|\alpha|=1$ and $|\alpha|=2$ are analogous. Thus, we get
\begin{equation}\label{r15}
R_{15}\le -\int_{\mathbb{T}^2}\int_{\mathbb{R}}p^\prime(v)
v\partial_\xi^\alpha\ddiv_\xi\Psi\partial_\xi^\alpha\Phi\dif \xi_1\dif \xi^{\prime}
+C(\varepsilon_1+\delta)\left(
\|\partial_\xi^\alpha\Phi\|^2_{L^2}
+
\|\nabla_\xi\Psi\|_{H^{|\alpha|-1}}^2\right).
\end{equation}
Therefore, combining \eqref{r11}, \eqref{r12}, \eqref{r13}, \eqref{r14} and \eqref{r15}, we conclude that
\begin{equation}\label{hg1.1}
\begin{aligned}
\frac{\dif}{\dif t}&\int_{\mathbb{T}^2}\int_{\mathbb{R}}
\frac{-p^\prime(v)}{2}|\partial_\xi^\alpha\Phi|^2\dif \xi_1\dif \xi^{\prime}\\
&\le C(\varepsilon_1+\delta)\left(
\|\nabla_\xi\Phi\|_{H^{|\alpha|-1}}^2
+
\|\nabla_\xi\Psi\|_{H^{|\alpha|-1}}^2\right)+C\delta^2|\dot{X}(t)|^2
+C\delta(G_3(t)+G^s(t))\\
&\qquad\underbrace{-\int_{\mathbb{T}^2}\int_{\mathbb{R}}p^\prime(v)
v\partial_\xi^\alpha\ddiv_\xi\Psi\partial_\xi^\alpha\Phi\dif \xi_1\dif \xi^{\prime}}\limits_{=:R_{1\ast}}.
\end{aligned}
\end{equation}

Multiplying $\eqref{sys3}_2$ by $\partial_\xi^\alpha\Psi$ and integrating over $\mathbb{R}\times\mathbb{T}^2$, we have
\[
\begin{aligned}
\frac{\dif}{\dif t}&\int_{\mathbb{T}^2}\int_{\mathbb{R}}
\frac{1}{2}|\partial_\xi^\alpha\Psi|^2\dif \xi_1\dif \xi^{\prime}\\
&=-\int_{\mathbb{T}^2}\int_{\mathbb{R}}\partial_\xi^\alpha\left( u\cdot\nabla_\xi\Psi\right)\cdot\partial_\xi^\alpha\Psi\dif \xi_1\dif \xi^{\prime}
+\dot{X}(t)\int_{\mathbb{T}^2}\int_{\mathbb{R}}\partial_{\xi_1}\partial_\xi^\alpha(u^s)^{-X}\cdot\partial_\xi^\alpha\Psi\dif \xi_1\dif \xi^{\prime}\\
&\quad-\int_{\mathbb{T}^2}\int_{\mathbb{R}}\partial_\xi^\alpha\left(vF\partial_{\xi_1}(u^s)^{-X}\right)\cdot\partial_\xi^\alpha\Psi\dif \xi_1\dif \xi^{\prime}
-\int_{\mathbb{T}^2}\int_{\mathbb{R}}\partial_\xi^\alpha\left(v\nabla_\xi\left(p(v)-p((v^s)^{-X})\right)\right)\cdot\partial_\xi^\alpha\Psi\dif \xi_1\dif \xi^{\prime}\\
&\quad+\int_{\mathbb{T}^2}\int_{\mathbb{R}}\partial_\xi^\alpha\left(v\ddiv_\xi Q_1\right)\cdot\partial_\xi^\alpha\Psi\dif \xi_1\dif \xi^{\prime}
+\int_{\mathbb{T}^2}\int_{\mathbb{R}}\partial_\xi^\alpha\left(v\nabla_\xi Q_2\right)\cdot\partial_\xi^\alpha\Psi\dif \xi_1\dif \xi^{\prime}=:\sum\limits_{i=1}\limits^{6}R_{2i}.
\end{aligned}
\]
By applying methodologies corresponding to those used for $R_{12}$, $R_{13}$ and $R_{14}$ to $R_{21}$, $R_{22}$ and $R_{23}$, respectively, we first derive $R_{21}$ as follows
\begin{equation}\label{r21}
R_{21}\le C(\varepsilon_1+\delta)\|\nabla_\xi\Psi\|^2_{H^{|\alpha|-1}}.
\end{equation}
Then, for $R_{22}$, we obtain that
\begin{equation}\label{r22}
\begin{aligned}
R_{22}
\le C\delta^2|\dot{X}(t)|^2+C\delta\int_{\mathbb{T}^2}\int_{\mathbb{R}}|\partial_\xi^\alpha\Psi|^2\dif \xi_1\dif \xi^{\prime}.
\end{aligned}
\end{equation}
Next, for $R_{23}$, we imply that
\begin{equation}\label{r23}
\begin{aligned}
R_{23}
\le  C\delta\left(\|\nabla_\xi\Psi\|_{H^{|\alpha|-1}}^2+\|\nabla_\xi\Phi\|_{H^{|\alpha|-1}}^2+G_3(t)+G^s(t)\right)
.
\end{aligned}
\end{equation}
For $R_{24}$, firstly, we note that
\[
\nabla_\xi\left(p(v)-p((v^s)^{-X})\right)=p^\prime(v)\nabla_\xi\Phi+\nabla_\xi(v^s)^{-X}\left(p^\prime(v)-p^\prime((v^s)^{-X})\right),
\]
then, for $|\alpha|=3$, using Young's inequality, Lemma \ref{pvsw} and Sobolev's embedding theorem, we get
\begin{align*}
-\int_{\mathbb{T}^2}\int_{\mathbb{R}}\partial_{\xi_i}\left(vp^\prime(v)\right)\partial_\xi^{\alpha-1}\nabla_\xi\Phi
\cdot\partial_\xi^\alpha\Psi\dif \xi_1\dif \xi^{\prime}
\le C(\delta+\varepsilon_1)(\|\partial_\xi^{\alpha-1}\nabla_\xi\Phi\|_{L^2}^2+\|\partial_\xi^\alpha\Psi\|_{L^2}^2),
\end{align*}
\begin{align*}
&-\int_{\mathbb{T}^2}\int_{\mathbb{R}}\partial_\xi^{\alpha-1}\left(vp^\prime(v)\right)\partial_{\xi_i}\nabla_\xi\Phi
\cdot\partial_\xi^\alpha\Psi\dif \xi_1\dif \xi^{\prime}\\
&\le C\int_{\mathbb{T}^2}\int_{\mathbb{R}}\left(|\partial_{\xi_i}v|+|\partial_\xi^{\alpha-1}v|\right)|\partial_{\xi_i}\nabla_\xi\Phi|
|\partial_\xi^\alpha\Psi|\dif \xi_1\dif \xi^{\prime}\\
&\le C\left(\|\partial_{\xi_i}v\|_{L^\infty}\int_{\mathbb{T}^2}\int_{\mathbb{R}}|\partial_{\xi_i}\nabla_\xi\Phi|
|\partial_\xi^\alpha\Psi|\dif \xi_1\dif \xi^{\prime}+\|\partial_\xi^{\alpha-1}v\|_{L^3}\|\partial_{\xi_i}\nabla_\xi\Phi\|_{L^6}\|\partial_\xi^\alpha\Psi\|_{L^2}\right)\\
&\le C(\delta+\varepsilon_1)(\|\partial_{\xi_i}\nabla_\xi\Phi\|_{H^1}^2+\|\partial_\xi^\alpha\Psi\|_{L^2}^2),
\end{align*}
and
\begin{align*}
&-\int_{\mathbb{T}^2}\int_{\mathbb{R}}\partial_\xi^{\alpha}\left(vp^\prime(v)\right)\nabla_\xi\Phi
\cdot\partial_\xi^\alpha\Psi\dif \xi_1\dif \xi^{\prime}\\
&\le C\int_{\mathbb{T}^2}\int_{\mathbb{R}}\left(|\partial_{\xi_i}v|+|\partial_\xi^{\alpha-1}v|+|\partial_\xi^{\alpha}v|\right)|\nabla_\xi\Phi|
|\partial_\xi^\alpha\Psi|\dif \xi_1\dif \xi^{\prime}\\
&\le C\left(\|\partial_{\xi_i}v\|_{L^\infty}\int_{\mathbb{T}^2}\int_{\mathbb{R}}|\nabla_\xi\Phi|
|\partial_\xi^\alpha\Psi|\dif \xi_1\dif \xi^{\prime}+\|\partial_{\xi}^{\alpha-1}v\|_{L^3}\|\nabla_\xi\Phi\|_{L^6}\|\partial_\xi^\alpha\Psi\|_{L^2}
+\|\partial_{\xi}^{\alpha}v\|_{L^2}\|\nabla_\xi\Phi\|_{L^\infty}\|\partial_\xi^\alpha\Psi\|_{L^2}\right)\\
&\le C(\delta+\varepsilon_1)(\|\nabla_\xi\Phi\|_{H^2}^2+\|\partial_\xi^\alpha\Psi\|_{L^2}^2).
\end{align*}
On the other hand, in a similar way, we deduce that
\begin{align*}
-&\int_{\mathbb{T}^2}\int_{\mathbb{R}}\partial_\xi^\alpha\Big(v\nabla_\xi(v^s)^{-X}\left(p^\prime(v)-p^\prime((v^s)^{-X})\right)\Big)\cdot\partial_\xi^\alpha\Psi\dif \xi_1\dif \xi^{\prime}\\
&\le C(\delta+\varepsilon_1)(\|\nabla_\xi\Phi\|_{H^{|\alpha|-1}}^2+G^s(t)+\|\partial_\xi^\alpha\Psi\|_{L^2}^2).
\end{align*}
In addition, the cases $|\alpha|=1$ and $|\alpha|=2$ can be handled similarly. Thus, combining above estimates, we conclude that
\begin{equation}\label{r24}
\begin{aligned}
R_{24}\le C(\varepsilon_1+\delta)\left(G^s(t)+\|\nabla_\xi\Phi\|_{H^{|\alpha|-1}}^2+\|\partial_\xi^\alpha\Psi\|_{L^2}^2\right)
-\int_{\mathbb{T}^2}\int_{\mathbb{R}}
vp^\prime(v)\partial_\xi^\alpha\nabla_\xi\Phi
\cdot\partial_\xi^\alpha\Psi\dif \xi_1\dif \xi^{\prime}.
\end{aligned}
\end{equation}
For $R_{25}$ and $R_{26}$, which are treated similarly to $R_{15}$, it holds
\begin{equation}\label{r25}
R_{25}\le
C(\varepsilon_1+\delta)\left(
\|\partial_\xi^\alpha\Psi\|^2_{L^2}
+
\|\nabla_\xi Q_1\|_{H^{|\alpha|-1}}^2\right)
+\int_{\mathbb{T}^2}\int_{\mathbb{R}}v\partial_\xi^\alpha\ddiv_\xi Q_1\cdot\partial_\xi^\alpha\Psi\dif \xi_1\dif \xi^{\prime}.
\end{equation}
\begin{equation}\label{r26}
R_{26}\le
C(\varepsilon_1+\delta)\left(
\|\partial_\xi^\alpha\Psi\|^2_{L^2}
+
\|\nabla_\xi Q_2\|_{H^{|\alpha|-1}}^2\right)
+\int_{\mathbb{T}^2}\int_{\mathbb{R}}v\partial_\xi^\alpha\nabla_\xi Q_2\cdot\partial_\xi^\alpha\Psi\dif \xi_1\dif \xi^{\prime}.
\end{equation}
Therefore, combining \eqref{r21}, \eqref{r22}, \eqref{r23}, \eqref{r24}, \eqref{r25} and \eqref{r26}, we conclude that
\begin{equation}\label{hg1.2}
\begin{aligned}
&\frac{\dif}{\dif t}\int_{\mathbb{T}^2}\int_{\mathbb{R}}
\frac{1}{2}|\partial_\xi^\alpha\Psi|^2\dif \xi_1\dif \xi^{\prime}\\
&\le C(\varepsilon_1+\delta)\left(G^s(t)+\|(\nabla_\xi\Phi, \nabla_\xi\Psi) \|_{H^{|\alpha|-1}}^2+\|(\nabla_\xi Q_1, \nabla_\xi Q_2) \|_{H^{|\alpha|-1}}^2\right)
+C\delta G_3(t)
\\&\underbrace{-\int_{\mathbb{T}^2}\int_{\mathbb{R}}vp^\prime(v)\partial_\xi^\alpha\nabla_\xi\Phi\cdot\partial_\xi^\alpha\Psi
\dif \xi_1\dif \xi^{\prime}}\limits_{=:R_{21\ast}}
+\underbrace{\int_{\mathbb{T}^2}\int_{\mathbb{R}}v\partial_\xi^\alpha\ddiv_\xi Q_1\cdot\partial_\xi^\alpha\Psi\dif \xi_1\dif \xi^{\prime}}\limits_{=:R_{22\ast}}
+\underbrace{\int_{\mathbb{T}^2}\int_{\mathbb{R}}v\partial_\xi^\alpha\nabla_\xi Q_2\cdot\partial_\xi^\alpha\Psi\dif \xi_1\dif \xi^{\prime}}\limits_{=:R_{23\ast}}.
\end{aligned}
\end{equation}

Multiplying $\eqref{sys3}_3$ by $\frac{\partial_\xi^\alpha Q_1}{2\mu}$ and integrating over $\mathbb{R}\times\mathbb{T}^2$, we have
\begin{align*}
&\frac{\dif}{\dif t}\int_{\mathbb{T}^2}\int_{\mathbb{R}}
\frac{\tau}{4\mu}|\partial_\xi^\alpha Q_1|^2
\dif \xi_1\dif \xi^{\prime}\\
&=-\frac{\tau}{2\mu}\int_{\mathbb{T}^2}\int_{\mathbb{R}}\partial_\xi^\alpha\left( u\cdot\nabla_\xi Q_1\right):\partial_\xi^\alpha Q_1\dif \xi_1\dif \xi^{\prime}
+\dot{X}(t)\frac{\tau}{2\mu}\int_{\mathbb{T}^2}\int_{\mathbb{R}}\partial_{\xi_1}\partial_\xi^\alpha (\Pi_1^s)^{-X}:\partial_\xi^\alpha Q_1\dif \xi_1\dif \xi^{\prime}\\
&\quad-\frac{\tau}{2\mu}\int_{\mathbb{T}^2}\int_{\mathbb{R}}\partial_\xi^\alpha \left(v F\partial_{\xi_1}(\Pi_1^s)^{-X}\right):\partial_\xi^\alpha Q_1\dif \xi_1\dif \xi^{\prime}
-\frac{1}{2\mu}\int_{\mathbb{T}^2}\int_{\mathbb{R}}\partial_\xi^\alpha \left(vQ_1\right):\partial_\xi^\alpha Q_1\dif \xi_1\dif \xi^{\prime}\\
&\quad+\frac{1}{2}\int_{\mathbb{T}^2}\int_{\mathbb{R}}
\partial_\xi^\alpha \left(v\Big(\nabla_\xi\Psi+\left(\nabla_\xi\Psi\right)^T
   -\frac{2}{3}\ddiv_\xi\Psi \mathrm I_3\Big)\right):\partial_\xi^\alpha Q_1\dif \xi_1\dif \xi^{\prime}=:\sum\limits_{i=1}\limits^5R_{3i}.
\end{align*}
For $R_{31}$, following an approach analogous to the estimation of $R_{12}$, we deduce that
\begin{equation}\label{r31}
R_{31}\le C(\varepsilon_1+\delta)\|\nabla_\xi Q_1\|_{H^{|\alpha|-1}}^2.
\end{equation}
For $R_{32}$, using Young's inequality and Lemma \ref{pvsw}, we get
\begin{equation}\label{r32}
R_{32}\le C\delta^2|\dot{X}(t)|^2+C\delta\|\partial_\xi^\alpha Q_1\|_{L^2}^2.
\end{equation}
For $R_{33}$, by using a method similar to that used for estimating $R_{14}$, we have
\begin{equation}\label{r33}
\begin{aligned}
R_{33}\le C\delta\left(\|\nabla_\xi\Psi\|_{H^{|\alpha|-1}}^2+\|\nabla_\xi\Phi\|_{H^{|\alpha|-1}}^2+\|\nabla_\xi Q_1\|_{H^{|\alpha|-1}}^2+G_3(t)+G^s(t)\right)
.
\end{aligned}
\end{equation}
For $R_{34}$, straightforward computation yields for $|\alpha|=3$,
\begin{align*}
-\frac{1}{2\mu}\int_{\mathbb{T}^2}\int_{\mathbb{R}}\partial_{\xi_i}v  \left(\partial_\xi^{\alpha-1} Q_1\right):\partial_\xi^\alpha Q_1\dif \xi_1\dif \xi^{\prime}
\le C(\delta+\varepsilon_1)(\|\partial_\xi^{\alpha-1} Q_1\|_{L^2}^2+\|\partial_\xi^\alpha Q_1\|_{L^2}^2),
\end{align*}
\begin{align*}
-\frac{1}{2\mu}\int_{\mathbb{T}^2}\int_{\mathbb{R}}\partial_\xi^{\alpha-1} v \left(\partial _{\xi_i}Q_1\right):\partial_\xi^\alpha Q_1\dif \xi_1\dif \xi^{\prime}&
\le C\|\partial_\xi^{\alpha-1} v\|_{L^6}\|\partial _{\xi_i}Q_1\|_{L^3}\|\partial_\xi^\alpha Q_1\|_{L^2}\\
&\le C\|\partial_\xi^{\alpha-1} v\|_{H^1}\|\partial _{\xi_i}Q_1\|_{H^1}\|\partial_\xi^\alpha Q_1\|_{L^2}\\
&\le C(\delta+\varepsilon_1)(\|\partial_{\xi_i} Q_1\|_{H^1}^2+\|\partial_\xi^\alpha Q_1\|_{L^2}^2),
\end{align*}
and
\begin{align*}
-\frac{1}{2\mu}\int_{\mathbb{T}^2}\int_{\mathbb{R}}\partial_\xi^\alpha v Q_1:\partial_\xi^\alpha Q_1\dif \xi_1\dif \xi^{\prime}&\le C\|\partial_\xi^{\alpha} v\|_{L^2}\|Q_1\|_{L^\infty}\|\partial_\xi^\alpha Q_1\|_{L^2}\\
&\le C\|\partial_\xi^{\alpha-1} v\|_{L^2}\|Q_1\|_{H^2}\|\partial_\xi^\alpha Q_1\|_{L^2}\\
&\le C(\delta+\varepsilon_1)\| Q_1\|_{H^3}^2.
\end{align*}
As well as, the cases for $|\alpha|=1$ and $|\alpha|=2$ follow similarly. Thus, we can get
\begin{equation}\label{r34}
\begin{aligned}
R_{34}&\le -\frac{1}{2\mu}\int_{\mathbb{T}^2}\int_{\mathbb{R}}
 v|\partial_\xi^\alpha Q_1|^2\dif \xi_1\dif \xi^{\prime}+C(\varepsilon_1+\delta)\|Q_1\|_{H^{|\alpha|}}^2.
\end{aligned}
\end{equation}
In a similar way, for $R_{35}$, we first note that $Q_1$ is a symmetric and traceless matrix and
\[
\nabla_\xi\Psi=\frac{\nabla_\xi\Psi+(\nabla_\xi\Psi)^T}{2}+\frac{\nabla_\xi\Psi-(\nabla_\xi\Psi)^T}{2}.
\]
Then,
\begin{equation}\label{r35}
\begin{aligned}
R_{35}&=\int_{\mathbb{T}^2}\int_{\mathbb{R}}
\partial_\xi^\alpha \left(v \nabla_\xi\Psi\right):\partial_\xi^\alpha Q_1\dif \xi_1\dif \xi^{\prime}\\
&\le  C(\varepsilon_1+\delta)\left(\|\nabla_\xi\Psi\|_{H^{|\alpha|-1}}^2+
   \|\partial_\xi^\alpha Q_1\|_{L^2}^2\right)
 +\int_{\mathbb{T}^2}\int_{\mathbb{R}}
v\partial_\xi^\alpha\nabla_\xi\Psi:\partial_\xi^\alpha Q_1
\dif \xi_1\dif \xi^{\prime}.
\end{aligned}
\end{equation}
Therefore, combining \eqref{r31}-\eqref{r35} and using smallness of $\varepsilon_1$ and $\delta$, we conclude that
\begin{equation}\label{hg1.3}
\begin{aligned}
\frac{\dif}{\dif t}&\int_{\mathbb{T}^2}\int_{\mathbb{R}}
\frac{\tau}{2\mu}|\partial_\xi^\alpha Q_1|^2
\dif \xi_1\dif \xi^{\prime}
+\frac{1}{2\mu}\int_{\mathbb{T}^2}\int_{\mathbb{R}}
 v|\partial_\xi^\alpha Q_1|^2\dif \xi_1\dif \xi^{\prime}\\
&\le C(\varepsilon_1+\delta)\left(\|\nabla_\xi\Psi\|_{H^{|\alpha|-1}}^2+
   \| Q_1\|_{H^3}^2\right)
+C\delta\left(\|\nabla_\xi\Phi\|_{H^{|\alpha|-1}}^2+G_3(t)+G^s(t)\right)\\
&\quad+C\delta^2|\dot X(t)|^2+\underbrace{\int_{\mathbb{T}^2}\int_{\mathbb{R}}
v\partial_\xi^\alpha\nabla_\xi\Psi:\partial_\xi^\alpha Q_1
\dif \xi_1\dif \xi^{\prime}}\limits_{=:R_{3\ast}}.
\end{aligned}
\end{equation}
Multiplying $\eqref{sys3}_4$ by $\frac{\partial_\xi^\alpha Q_2}{\lambda}$ and integrating over $\mathbb{R}\times\mathbb{T}^2$, we have
\begin{align*}
&\frac{\dif}{\dif t}\int_{\mathbb{T}^2}\int_{\mathbb{R}}
\frac{\tau}{2\lambda}|\partial_\xi^\alpha Q_2|^2\dif \xi_1\dif \xi^{\prime}\\
&=-\frac{\tau}{\lambda}\int_{\mathbb{T}^2}\int_{\mathbb{R}}\partial_\xi^\alpha \left( u\cdot\nabla_\xi Q_2\right)\cdot\partial_\xi^\alpha Q_2\dif \xi_1\dif \xi^{\prime}
+\dot{X}(t)\frac{\tau}{\lambda}\int_{\mathbb{T}^2}\int_{\mathbb{R}}\partial_{\xi_1}\partial_\xi^\alpha (\Pi_2^s)^{-X}\cdot\partial_\xi^\alpha Q_2\dif \xi_1\dif \xi^{\prime}\\
&\quad-\frac{\tau}{\lambda}\int_{\mathbb{T}^2}\int_{\mathbb{R}}\partial_\xi^\alpha \left(v F\partial_{\xi_1}(\Pi_2^s)^{-X}\right)\cdot\partial_\xi^\alpha Q_2\dif \xi_1\dif \xi^{\prime}
-\frac{1}{\lambda}\int_{\mathbb{T}^2}\int_{\mathbb{R}}\partial_\xi^\alpha \left(vQ_2\right)\cdot\partial_\xi^\alpha Q_2\dif \xi_1\dif \xi^{\prime}\\
&\quad+\int_{\mathbb{T}^2}\int_{\mathbb{R}}
\partial_\xi^\alpha \left(v\ddiv_\xi \Psi\right)\cdot\partial_\xi^\alpha Q_2\dif \xi_1\dif \xi^{\prime}=:\sum\limits_{i=1}\limits^5R_{4i}.
\end{align*}
Following the method employed to estimate $R_{31}$-$R_{35}$, we can directly extend these results to the $R_{41}$-$R_{45}$. Then, for $R_{41}$, it holds that
\begin{equation}\label{r41}
R_{41}
\le C(\varepsilon_1+\delta) \|\nabla_\xi Q_2\|_{H^{|\alpha|-1}}^2.
\end{equation}
For $R_{42}$, we get
\begin{equation}\label{r42}
R_{42}\le C\delta^2|\dot{X}(t)|^2+C\delta\|\partial_\xi^\alpha Q_2\|_{L_2}^2.
\end{equation}
For $R_{43}$, we obtain that
\begin{equation}\label{r43}
R_{43}\le C\delta\left(\|\nabla_\xi\Psi\|_{H^{|\alpha|-1}}^2+\|\nabla_\xi\Phi\|_{H^{|\alpha|k-1}}^2+
\|\nabla_\xi Q_2\|_{H^{|\alpha|k-1}}^2+G_3(t)+G^s(t)\right).
\end{equation}
For $R_{44}$, we imply that
\begin{equation}\label{r44}
R_{44}
\le
C(\varepsilon_1+\delta)\|Q_2\|_{H^{|\alpha|}}^2
-\frac{1}{\lambda}\int_{\mathbb{T}^2}\int_{\mathbb{R}}
 v|\partial_\xi^\alpha Q_2|^2\dif \xi_1\dif \xi^{\prime}.
\end{equation}
And, for $R_{45}$, we deduce that
\begin{equation}\label{r45}
\begin{aligned}
R_{45}
\le  C(\varepsilon_1+\delta)\left(\|\nabla_\xi\Psi\|_{H^{|\alpha|-1}}^2+
   \|\partial_\xi^\alpha Q_2\|_{L^2}^2\right)
 +\int_{\mathbb{T}^2}\int_{\mathbb{R}}
v\partial_\xi^\alpha\ddiv_\xi\Psi\cdot\partial_\xi^\alpha Q_2
\dif \xi_1\dif \xi^{\prime}.
\end{aligned}
\end{equation}
Therefore, combining \eqref{r41}-\eqref{r45} and using smallness of $\varepsilon_1$ and $\delta$, we conclude that
\begin{equation}\label{hg1.4}
\begin{aligned}
\frac{\dif}{\dif t}&\int_{\mathbb{T}^2}\int_{\mathbb{R}}
\frac{\tau}{2\lambda}|\partial_\xi^\alpha Q_2|^2\dif \xi_1\dif \xi^{\prime}
+\frac{1}{2\lambda}\int_{\mathbb{T}^2}\int_{\mathbb{R}}
 v|\partial_\xi^\alpha Q_2|^2\dif \xi_1\dif \xi^{\prime}\\
&\le C(\varepsilon_1+\delta)\left(\|\nabla_\xi\Psi\|_{H^{|\alpha|-1}}^2+
   \| Q_2\|_{H^3}^2\right)
+C\delta\left(\|\nabla_\xi\Phi\|_{H^{|\alpha|-1}}^2+G_3(t)+G^s(t)\right)\\
&\quad+C\delta^2|\dot X(t)|^2+\underbrace{\int_{\mathbb{T}^2}\int_{\mathbb{R}}
v\partial_\xi^\alpha\ddiv_\xi \Psi\cdot\partial_\xi^\alpha Q_2\dif \xi_1\dif \xi^{\prime}}\limits_{=:R_{4\ast}}
\end{aligned}
\end{equation}

On the other hand, by applying integration by parts and using Young's inequality, Sobolev's embedding theorem, \eqref{3.4} and Lemma \ref{pvsw}, we obtain that
\begin{equation}\label{rr1}
\begin{aligned}
R_{1\ast}+R_{21\ast}&=\int_{\mathbb{T}^2}\int_{\mathbb{R}}\nabla_\xi\left(vp^{\prime}(v)\right)\cdot
\partial_\xi^\alpha\Phi\partial_\xi^\alpha\Psi
\dif \xi_1\dif \xi^{\prime}
\le C(\varepsilon_1+\delta)\left(\|\partial_\xi^\alpha\Phi\|_{L^2}^2+\|\partial_\xi^\alpha\Psi\|_{L^2}^2\right).
\end{aligned}
\end{equation}
\begin{equation}\label{rr2}
\begin{aligned}
R_{22\ast}+R_{3\ast}&=-\int_{\mathbb{T}^2}\int_{\mathbb{R}}\nabla_\xi v\cdot\partial_\xi^\alpha Q_1\cdot\partial_\xi^\alpha\Psi\dif \xi_1\dif \xi^{\prime}
\le C(\varepsilon_1+\delta)\left(\|\partial_\xi^\alpha\Psi\|_{L^2}^2+\|\partial_\xi^\alpha Q_1\|_{L^2}^2\right).
\end{aligned}
\end{equation}
And
\begin{equation}\label{rr3}
\begin{aligned}
R_{23\ast}+R_{4\ast}&=-\int_{\mathbb{T}^2}\int_{\mathbb{R}}\nabla_\xi v\cdot\partial_\xi^\alpha\Psi\partial_\xi^\alpha Q_2\dif \xi_1\dif \xi^{\prime}
\le C(\varepsilon_1+\delta)\left(\|\partial_\xi^\alpha\Psi\|_{L^2}^2+\|\partial_\xi^\alpha Q_2\|_{L^2}^2\right).
\end{aligned}
\end{equation}
Thus, combining \eqref{hg1.1}, \eqref{hg1.2}, \eqref{hg1.3}, \eqref{hg1.4} with \eqref{rr1}-\eqref{rr3} and choosing $\varepsilon_1, \delta$ sufficiently small, we can derive that
\begin{equation}\label{gjgj}
\begin{aligned}
&\frac{\dif}{\dif t}\int_{\mathbb{T}^2}\int_{\mathbb{R}}\left(
-\frac{p^\prime(v)}{2}|\partial_\xi^\alpha \Phi|^2+\frac{1}{2}|\partial_\xi^\alpha k\Psi|^2
+\frac{\tau}{2\mu}|\partial_\xi^\alpha Q_1|^2+\frac{\tau}{2\lambda}|\partial_\xi^\alpha Q_2|^2
\right)\dif \xi_1\dif \xi^{\prime}\\
&\qquad+\frac{1}{4\mu}\int_{\mathbb{T}^2}\int_{\mathbb{R}}
 v|\partial_\xi^\alpha Q_1|^2\dif \xi_1\dif \xi^{\prime}
 +\frac{1}{4\lambda}\int_{\mathbb{T}^2}\int_{\mathbb{R}}
 v|\partial_\xi^\alpha Q_2|^2\dif \xi_1\dif \xi^{\prime}\\
 &\le C\delta^2|\dot X(t)|^2+C(\delta+\varepsilon_1)\left(G^s(t)+\|(\nabla_\xi \Phi,  \nabla_\xi\Psi) \|_{H^{|\alpha|-1}}^2
 +\|(Q_1, Q_2)\|_{L^2}^2\right)
 +C\delta G_3(t).
\end{aligned}
\end{equation}

Finally, integrating the equality \eqref{gjgj} over $[0, t]$, the proof of this lemma is finished.
\end{proof}

\subsection{Dissipative estimates}
In the following lemmas, we give the dissipative estimates of given solutions of system \eqref{sys1}.
 \begin{lemma}\label{lehs1}
Under the hypotheses of Proposition \ref{p1}, there exists constant $C>0$ independent of $\tau, \nu, \delta, \varepsilon_1, T$, such that for $t\in[0, T]$, we have
\begin{equation}\label{hsgj1.1}
\begin{aligned}
&\int_0^t\|\nabla_\xi(v-(v^s)^{-X})|_{L^2}^2\dif t+\int_0^t\|\nabla_\xi(u-(u^s)^{-X})|_{L^2}^2\dif t\\
&\le 
\chi_1\|\sqrt\rho(u-(u^s)^{-X})\|_{L^2}^2
+C\|(v_0-v^s, u_0-u^s
\sqrt{\tau}(\Pi_{10}-\Pi_1^s), \sqrt{\tau}(\Pi_{20}-\Pi_2^s))\|_{H^1}^2.
\\
&\quad+C\delta^2\int_0^t|\dot X(t)|^2\dif t
+C(\delta
+\varepsilon_1)\int_0^t\left(G_3(t)+G^s(t)\right)\dif t+C\delta\int_0^t G_2(t)\dif t,
\end{aligned}
\end{equation}
where $\chi_1$ is positive constants to be chosen sufficiently small later, $G_2(t)$ and $G_{3}(t)$ are defined in Lemma \ref{le4.5} and $G^s(t)$ is defined in Lemma \ref{le0}.
\end{lemma}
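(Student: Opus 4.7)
The plan is to extract the $\nabla_\xi\Phi$ dissipation from the momentum equation $\eqref{sys1}_2$ by testing it against $\nabla_\xi\Phi$, while the $\nabla_\xi\Psi$ dissipation will be read off algebraically from the relaxation laws $\eqref{sys1}_3$-$\eqref{sys1}_4$. All remaining terms must then be absorbed either into the leading dissipations on the left, into the initial $H^1$ data, into the $\chi_1$-boundary term, into $\delta^2\int_0^t|\dot X|^2\dif t$, or into the weighted dissipations $G_2,G_3,G^s$ of Lemma \ref{le0}.

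Taking the $L^2(\Omega)$ inner product of $\eqref{sys1}_2$ with $\nabla_\xi\Phi$ and integrating over $[0,t]$, the pressure-gradient contribution produces
\[
-\int_0^t\!\int_\Omega\nabla_\xi\bigl(p(v)-p((v^s)^{-X})\bigr)\cdot\nabla_\xi\Phi\,\dif\xi\,\dif s=\int_0^t\!\int_\Omega|p^\prime(v)|\,|\nabla_\xi\Phi|^2\,\dif\xi\,\dif s+\text{err},
\]
where $\text{err}$, coming from $(p^\prime(v)-p^\prime((v^s)^{-X}))\nabla_\xi(v^s)^{-X}\cdot\nabla_\xi\Phi$, is controlled by $C\delta\int_0^tG^s\dif s+\chi\int_0^t\|\nabla_\xi\Phi\|_{L^2}^2\dif s$. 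The inertia term is handled by integration by parts in time,
\[
\int_0^t\!\int\rho\,\partial_t\Psi\cdot\nabla_\xi\Phi\,\dif\xi\,\dif s=\Bigl[\int\rho\,\Psi\cdot\nabla_\xi\Phi\,\dif\xi\Bigr]_0^t-\int_0^t\!\int\Psi\cdot\partial_t(\rho\,\nabla_\xi\Phi)\,\dif\xi\,\dif s,
\]
and $\partial_t\rho,\partial_t\Phi$ inside the remaining integrand are replaced through $\eqref{sys1}_1$ by spatial derivatives of $\Phi,\Psi$. Young's inequality on the boundary at time $t$ yields the required $\chi_1\|\sqrt\rho\Psi(t)\|_{L^2}^2$, a $C_{\chi_1}\|\nabla_\xi\Phi(t)\|_{L^2}^2$ piece absorbed via the high-order bound \eqref{gjgj1.1} of Lemma \ref{legj}, and an initial-data term. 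The stress divergences $\ddiv_\xi Q_1,\nabla_\xi Q_2$ are integrated by parts once in $\xi$, yielding $Q_1:\nabla_\xi^2\Phi$ and $Q_2\,\Delta_\xi\Phi$ bounded by $\chi\|\nabla_\xi^2\Phi\|_{L^2}^2+C_\chi\|(Q_1,Q_2)\|_{L^2}^2$; the first is absorbed by Lemma \ref{legj} and the second is integrable via the dissipations $G_6,G_8$. The shift contribution $\rho\dot X\,\partial_{\xi_1}(u^s)^{-X}\cdot\nabla_\xi\Phi$ produces $C\delta^2|\dot X|^2+\chi\|\nabla_\xi\Phi\|^2$ by Lemma \ref{pvsw} and Young's inequality, while $F\,\partial_{\xi_1}(u^s)^{-X}\cdot\nabla_\xi\Phi$ is handled by decomposing $F$ through \eqref{wcxf}, yielding $G_3$ and $G^s$ contributions. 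The convection and lower-order terms are of order $(\delta+\varepsilon_1)\|\nabla_\xi(\Phi,\Psi)\|_{L^2}^2$ by Sobolev embedding under \eqref{3.4}.

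For the $\nabla_\xi\Psi$ dissipation, the relaxation laws are rearranged into the pointwise identities
\[
\mu\bigl(\nabla_\xi\Psi+(\nabla_\xi\Psi)^T-\tfrac{2}{3}\ddiv_\xi\Psi\,\mathrm I_3\bigr)=Q_1+\tau\mathcal R_1,\qquad\lambda\,\ddiv_\xi\Psi=Q_2+\tau\mathcal R_2,
\]
where $\mathcal R_i$ collects the convection, shift, and $\partial_t Q_i$ terms. The integration-by-parts identity $\|\nabla_\xi\Psi\|_{L^2}^2=2\|D^{\mathrm{dev}}_\xi\Psi\|_{L^2}^2-\tfrac{1}{3}\|\ddiv_\xi\Psi\|_{L^2}^2$, valid on $\mathbb R\times\mathbb T^2$, implies $\|\nabla_\xi\Psi\|_{L^2}^2\le 2\|D^{\mathrm{dev}}_\xi\Psi\|_{L^2}^2\lesssim\|Q_1\|_{L^2}^2+\tau^2\|\mathcal R_1\|_{L^2}^2$; the $\tau\partial_t Q_1$ piece of $\mathcal R_1$ is rewritten through the equation itself in terms of $Q_1,\nabla_\xi\Psi$ and lower-order quantities, so the resulting $\|\nabla_\xi\Psi\|_{L^2}^2$ contribution can be reabsorbed on the left once $\tau,\varepsilon_1,\delta$ are small, and $\int_0^t\|Q_i\|_{L^2}^2\dif s$ is already controlled by $G_6,G_8$ in Lemma \ref{le0}. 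The main obstacle is precisely the circular dependence generated by the boundary term $\|\nabla_\xi\Phi(t)\|_{L^2}^2$ in the first step and the $\tau\partial_t Q_i$ remainder in the second; both are closed by invoking Lemma \ref{legj} together with the relaxation equations themselves, so that after collecting every contribution with the small prefactors $\chi,\chi_1,\delta,\varepsilon_1$ the inequality \eqref{hsgj1.1} follows.
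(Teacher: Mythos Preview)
Your treatment of the $\nabla_\xi\Phi$ dissipation is essentially the same as the paper's, with one small correction: when you integrate by parts in the stress terms to produce $Q_1:\nabla_\xi^2\Phi$ and then try to absorb $\chi\int_0^t\|\nabla_\xi^2\Phi\|_{L^2}^2\dif s$ via Lemma~\ref{legj}, note that Lemma~\ref{legj} only provides $\sup_t\|\nabla_\xi\Phi(t)\|_{H^2}^2$ and the \emph{time-integrated} dissipation of $\nabla_\xi Q_i$, not of $\nabla_\xi^2\Phi$. The paper avoids this by applying Young's inequality directly to $\ddiv_\xi Q_1\cdot\nabla_\xi\Phi$, producing $\chi\|\nabla_\xi\Phi\|^2+C_\chi\|\nabla_\xi Q_1\|^2$, and the latter \emph{is} a dissipation in Lemma~\ref{legj}. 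This is a minor fix.

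The genuine gap is in your algebraic extraction of the $\nabla_\xi\Psi$ dissipation. Writing $2\mu D^{\mathrm{dev}}_\xi\Psi=Q_1+\tau\mathcal R_1$ and then substituting the equation back in for $\tau\rho\partial_tQ_1$ is circular: by definition $\tau\mathcal R_1=2\mu D^{\mathrm{dev}}_\xi\Psi-Q_1$, so the substitution returns $\|2\mu D^{\mathrm{dev}}_\xi\Psi\|^2$ with an $O(1)$ coefficient, \emph{not} a coefficient small in $\tau$. There is no extra factor of $\tau$ to exploit, because the only $\tau$ present already multiplies $\partial_tQ_1$ in the equation itself. Hence the $\|\nabla_\xi\Psi\|^2$ term cannot be reabsorbed, and the argument yields a tautology.

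The paper breaks this circularity by a genuine energy step: it multiplies $\eqref{sys1}_3$ by $\nabla_\xi\Psi$ (and $\eqref{sys1}_4$ by $\ddiv_\xi\Psi$), obtaining $\mu\|\nabla_\xi\Psi\|^2+\tfrac{\mu}{3}\|\ddiv_\xi\Psi\|^2$ and $\lambda\|\ddiv_\xi\Psi\|^2$ on the left. The dangerous term $\tau\rho\,\partial_tQ_1:\nabla_\xi\Psi$ is then integrated by parts \emph{in time}, producing a boundary term $\tau\int\rho Q_1:\nabla_\xi\Psi$ and a bulk term $-\tau\int\rho Q_1:\partial_t\nabla_\xi\Psi$; the momentum equation $\eqref{sys1}_2$ is substituted for $\partial_t\Psi$, yielding terms of the type $\tau\|\nabla_\xi Q_i\|^2$, $\chi\|\nabla_\xi\Phi\|^2$, and lower order, all of which close via Lemma~\ref{legj} and the $\nabla_\xi\Phi$ estimate already obtained. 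The undamped piece $\int Q_1:\nabla_\xi\Psi$ is split by Young as $\tfrac{\mu}{2}\|\nabla_\xi\Psi\|^2+\tfrac{1}{2\mu}\|Q_1\|^2$, with the second term controlled by the relative-entropy dissipation in Lemma~\ref{le0}. This time-integration-by-parts, transferring the time derivative from $Q_1$ onto $\Psi$ and then using the \emph{other} equation, is the missing idea in your outline.
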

\begin{proof}
Multiplying equation $\eqref{sys3}_2$ by $\rho\nabla_\xi\Phi$ and integrating over $(0, t)\times\mathbb{R}\times\mathbb{T}^2$, we have
\begin{align*}
-&\int_0^t\int_{\mathbb{T}^2}\int_{\mathbb{R}}p^\prime(v)|\nabla_\xi\Phi|^2\dif \xi_1\dif \xi^{\prime}\dif t\\
=&\int_0^t\int_{\mathbb{T}^2}\int_{\mathbb{R}}
\rho\partial_t\Psi
\cdot\nabla_\xi\Phi\dif \xi_1\dif \xi^{\prime}\dif t
-\sigma\int_0^t\int_{\mathbb{T}^2}\int_{\mathbb{R}}
\rho\partial_{\xi_1}\Psi
\cdot\nabla_\xi\Phi\dif \xi_1\dif \xi^{\prime}\dif t\\
&+\int_0^t\int_{\mathbb{T}^2}\int_{\mathbb{R}}
        \rho( u\cdot\nabla_\xi\Psi)
\cdot\nabla_\xi\Phi\dif \xi_1\dif \xi^{\prime}\dif t
-\int_0^t\int_{\mathbb{T}^2}\int_{\mathbb{R}}
    \rho \dot{X}(t)\partial_{\xi_1}(u^s)^{-X}
\cdot\nabla_\xi\Phi\dif \xi_1\dif \xi^{\prime}\dif t\\
&+\int_0^t\int_{\mathbb{T}^2}\int_{\mathbb{R}}
        (F\partial_{\xi_1}(u^s)^{-X})
\cdot\nabla_\xi\Phi\dif \xi_1\dif \xi^{\prime}\dif t
+\int_0^t\int_{\mathbb{T}^2}\int_{\mathbb{R}}
    \ddiv_\xi Q_1
\cdot\nabla_\xi\Phi\dif \xi_1\dif \xi^{\prime}\dif t\\
&+\int_0^t\int_{\mathbb{T}^2}\int_{\mathbb{R}}
     \nabla_\xi Q_2
\cdot\nabla_\xi\Phi\dif \xi_1\dif \xi^{\prime}\dif t
+\int_0^t\int_{\mathbb{T}^2}\int_{\mathbb{R}}
(p(v)-p((v^s)^{-X})\nabla_\xi(v^s)^{-X}\cdot\nabla_\xi\Phi
\dif \xi_1\dif \xi^{\prime}\dif t\\
=:&\sum\limits_{i=1}\limits^8M_i.
\end{align*}
For $M_1$, by applying integration by parts and using $\eqref{sys1}_1$, we get
\begin{align*}
M_1
=&\int_{\mathbb{T}^2}\int_{\mathbb{R}}
\left(\rho\Psi
\cdot\nabla_\xi\Phi-\rho_0\Psi_0
\cdot\nabla_\xi\Phi_0\right)\dif \xi_1\dif \xi^{\prime}
+\sigma\int_0^t\int_{\mathbb{T}^2}\int_{\mathbb{R}}
\ddiv_\xi(\rho\Psi) \partial_{\xi_1}\Phi
\dif \xi_1\dif \xi^{\prime}\dif t\\
&-\int_0^t\int_{\mathbb{T}^2}\int_{\mathbb{R}}
\ddiv_\xi(\rho\Psi)  (u\cdot\nabla_\xi\Phi)
        \dif \xi_1\dif \xi^{\prime}\dif t
+\int_0^t\int_{\mathbb{T}^2}\int_{\mathbb{R}}
  \ddiv_\xi(\rho\Psi)  (\dot{X}(t)\partial_{\xi_1}(v^s)^{-X})
    \dif \xi_1\dif \xi^{\prime}\dif t\\
&-\int_0^t\int_{\mathbb{T}^2}\int_{\mathbb{R}}
      \ddiv_\xi(\rho\Psi)  (vF\partial_{\xi_1}(v^s)^{-X})
        \dif \xi_1\dif \xi^{\prime}\dif t
+\int_0^t\int_{\mathbb{T}^2}\int_{\mathbb{R}}
  \ddiv_\xi(\rho\Psi)  (v\ddiv_\xi\Psi)
    \dif \xi_1\dif \xi^{\prime}\dif t\\
    &
   + \int_{\mathbb{T}^2}\int_{\mathbb{R}}
\ddiv_{\xi}(\rho u)\Psi
\cdot\nabla_\xi\Phi\dif \xi_1\dif \xi^{\prime}\dif t
    =:\sum\limits_{j=1}\limits^7M_{1j},
\end{align*}
where $\Psi_0=(u_0-u^s)(\xi)$ and $\nabla_\xi\Phi_0=\nabla_\xi (v_0-v^s)(\xi)$. By using Young's inequality and Lemma \ref{pvsw}, we imply that
\[
M_{11}\le \chi_1\|\sqrt\rho\Psi\|_{L^2}^2+C(\chi_1)\|\nabla_\xi\Phi\|_{L^2}^2
+C(\|\Psi_0\|_{L^2}^2+\|\nabla_\xi\Phi_0\|_{L^2}^2),
\]
and
\begin{align*}
M_{12}&=
\sigma\int_0^t\int_{\mathbb{T}^2}\int_{\mathbb{R}}
\partial_{\xi_1}\rho \Psi \cdot\nabla_\xi\Phi
\dif \xi_1\dif \xi^{\prime}\dif t
+\sigma\int_0^t\int_{\mathbb{T}^2}\int_{\mathbb{R}}
\rho\partial_{\xi_1}\Psi \cdot\nabla_\xi\Phi
\dif \xi_1\dif \xi^{\prime}\dif t\\
&\le C\|\Psi\|_{L^\infty}\int_0^t\int_{\mathbb{T}^2}\int_{\mathbb{R}}|\nabla_\xi\Phi|^2\dif \xi_1\dif \xi^{\prime}\dif t
+C\int_0^t\int_{\mathbb{T}^2}\int_{\mathbb{R}}
(v^s)^{-X}_{\xi_1}|p(v)-p((v^s)^{-X})||\nabla_\xi\Phi|\dif \xi_1\dif \xi^{\prime}\dif t\\
&\quad+\frac{\delta}{\nu}\int_0^t\int_{\mathbb{T}^2}\int_{\mathbb{R}}
a_{\xi_1}^{-X}\Big|u_1-(u_1^s)^{-X}-\frac{p(v)-p((v^s)^{-X})}{\sigma_\ast}\Big||\nabla_\xi\Phi|\dif \xi_1\dif \xi^{\prime}\dif t\\
&\quad+\frac{\delta}{\nu}\int_0^t\int_{\mathbb{T}^2}\int_{\mathbb{R}}
a_{\xi_1}^{-X}(|u_1|+|u_3|)|\nabla_\xi\Phi|\dif \xi_1\dif \xi^{\prime}\dif t
-M_2\\
&\quad\le C(\delta+\varepsilon_1)\int_0^t\|\nabla_\xi\Phi\|_{L^2}^2\dif t
+C\delta\int_0^t\left(G_2(t)+G_3(t)+G^s(t)\right)\dif t-M_2.
\end{align*}
Similarly, for $M_{13}$,
\begin{align*}
M_{13}&\le C\int_0^t\int_{\mathbb{T}^2}\int_{\mathbb{R}}|\Psi||\nabla_\xi\Phi|^2\dif \xi_1\dif \xi^{\prime}\dif t
+C\int_0^t\int_{\mathbb{T}^2}\int_{\mathbb{R}}(v^s)^{-X}_{\xi_1}|\Psi||\nabla_\xi\Phi|\dif \xi_1\dif \xi^{\prime}\dif t\\
&\quad+C\int_0^t\int_{\mathbb{T}^2}\int_{\mathbb{R}}|\ddiv_{\xi}\Psi||\Psi||\nabla_\xi\Phi|\dif \xi_1\dif \xi^{\prime}\dif t
-\int_0^t\int_{\mathbb{T}^2}\int_{\mathbb{R}}
\rho\ddiv_\xi\Psi(u_1^s)^{-X}\partial_{\xi_1}\Phi\dif \xi_1\dif \xi^{\prime}\dif t\\
&\le  C(\delta+\varepsilon_1)\int_0^t\|\nabla_\xi\Phi\|_{L^2}^2\dif t
+C\varepsilon_1\int_0^t\|\nabla_\xi\Psi\|_{L^2}^2\dif z
+C\delta\int_0^t\left(G_2(t)+G_3(t)+G^s(t)\right)\dif t\\
&\quad-\int_0^t\int_{\mathbb{T}^2}\int_{\mathbb{R}}
\rho\ddiv_\xi\Psi(u_1^s)^{-X}\partial_{\xi_1}\Phi\dif \xi_1\dif \xi^{\prime}\dif t.
\end{align*}
For $M_{14}$,
\begin{align*}
M_{14}&\le
C\int_0^t\int_{\mathbb{T}^2}\int_{\mathbb{R}}
 |\ddiv_\xi\Psi|  |\dot{X}(t)|(v^s)^{-X}_{\xi_1}
    \dif \xi_1\dif \xi^{\prime}\dif t+
C\int_0^t\int_{\mathbb{T}^2}\int_{\mathbb{R}}
 |\nabla_\xi\Phi||\Psi|  |\dot{X}(t)|(v^s)^{-X}_{\xi_1}
    \dif \xi_1\dif \xi^{\prime}\dif t\\
&\quad+C\int_0^t\int_{\mathbb{T}^2}\int_{\mathbb{R}}
|\Psi|  |\dot{X}(t)|\left((v^s)^{-X}_{\xi_1}\right)^2
    \dif \xi_1\dif \xi^{\prime}\dif t\\
&\le C\delta^2\int_0^t|\dot X(z)|^2\dif t
+C\varepsilon_1\int_0^t\|\nabla_\xi\Phi\|_{L^2}^2\dif t
+C\delta\int_0^t\left(\|\nabla_\xi\Psi\|_{L^2}^2+G_2(t)+G_3(t)+G^s(t)\right)\dif t.
\end{align*}
In particular, together with \eqref{wcxf}, we obtain that
\begin{align*}
M_{15}&\le
C\int_0^t\int_{\mathbb{T}^2}\int_{\mathbb{R}}
      |\nabla_\xi\Phi||\Psi| \left(\Big|u_1-(u_1)^s-\frac{p(v)-p((v^s)^{-X})}{\sigma_\ast}\Big|
      +|p(v)-p((v^s)^{-X})|\right)(v^s)^{-X}_{\xi_1}
        \dif \xi_1\dif \xi^{\prime}\dif t\\
&\quad+C\int_0^t\int_{\mathbb{T}^2}\int_{\mathbb{R}}
  |\Psi| \left(\Big|u_1-(u_1)^s-\frac{p(v)-p((v^s)^{-X})}{\sigma_\ast}\Big|
      +|p(v)-p((v^s)^{-X})|\right)\left((v^s)^{-X}_{\xi_1}\right)^2
        \dif \xi_1\dif \xi^{\prime}\dif t\\
&\quad+C\int_0^t\int_{\mathbb{T}^2}\int_{\mathbb{R}}
      |\ddiv_\xi\Psi|
      \left(\Big|u_1-(u_1)^s-\frac{p(v)-p((v^s)^{-X})}{\sigma_\ast}\Big|
      +|p(v)-p((v^s)^{-X})|\right)(v^s)^{-X}_{\xi_1}
        \dif \xi_1\dif \xi^{\prime}\dif t\\
&\le C\varepsilon_1\int_0^t\|\nabla_\xi\Phi\|_{L^2}^2\dif t
+C\delta\int_0^t\|\nabla_\xi\Phi\|_{L^2}^2\dif t+
C(\delta+\varepsilon_1)\int_0^t\left(G_3(t)+G^s(t)\right)\dif t.
\end{align*}
In a similar way, for $M_{16}$,
\begin{align*}
M_{16}&\le C\int_0^t\int_{\mathbb{T}^2}\int_{\mathbb{R}}
  |\nabla_\xi\Phi||\Psi||\ddiv_\xi\Psi|
    \dif \xi_1\dif \xi^{\prime}\dif t
    +C\int_0^t\int_{\mathbb{T}^2}\int_{\mathbb{R}}
(v^s)^{-X}_{\xi_1}|\Psi||\ddiv_\xi\Psi|
    \dif \xi_1\dif \xi^{\prime}\dif t\\
&\quad+\int_0^t\int_{\mathbb{T}^2}\int_{\mathbb{R}}
 |\ddiv_\xi\Psi|^2
    \dif \xi_1\dif \xi^{\prime}\dif t\\
 &\le C\varepsilon_1\int_0^t\|\nabla_\xi\Phi\|_{L^2}^2\dif t
 +C(\varepsilon_1+\delta)\int_0^t\|\nabla_\xi\Psi\|_{L^2}^2\dif t
+C\delta\int_0^t\left(G_2(z)+G_3(z)+G^s(z)\right)\dif t\\
&\quad+\int_0^t\int_{\mathbb{T}^2}\int_{\mathbb{R}}
 |\ddiv_\xi\Psi|^2
    \dif \xi_1\dif \xi^{\prime}\dif t.
\end{align*}
For $M_{17}$,
\begin{align*}
M_{17}&\le C\int_{\mathbb{T}^2}\int_{\mathbb{R}}
|\Psi|
|\nabla_\xi\Phi|^2\dif \xi_1\dif \xi^{\prime}\dif t
+C\int_{\mathbb{T}^2}\int_{\mathbb{R}}
(v^s)^{-X}_{\xi_1}|\Psi|
|\nabla_\xi\Phi|\dif \xi_1\dif \xi^{\prime}\dif t\\
&\quad+C\int_{\mathbb{T}^2}\int_{\mathbb{R}}
|\ddiv_\xi\Psi|  |\Psi|
|\nabla_\xi\Phi|\dif \xi_1\dif \xi^{\prime}\dif t
+C\int_{\mathbb{T}^2}\int_{\mathbb{R}}
|(u_1^s)^{-X}_{\xi_1}|  |\Psi|
|\nabla_\xi\Phi|\dif \xi_1\dif \xi^{\prime}\dif t\\
&\le C\varepsilon_1\int_0^t\|\nabla_\xi\Psi\|_{L^2}^2\dif t
 +C(\varepsilon_1+\delta)\int_0^t\|\nabla_\xi\Phi\|_{L^2}^2\dif t
+C\delta\int_0^t\left(G_2(t)+G_3(t)+G^s(t)\right)\dif t.
\end{align*}
Thus, combining the above estimates, we derive that
\begin{equation}\label{m1}
\begin{aligned}
M_1+M_2&\le \chi_1\|\sqrt\rho\Psi\|_{L^2}^2+C(\chi_1)\|\nabla_\xi\Phi\|_{L^2}^2
+C(\|\Psi_0\|_{L^2}^2+\|\nabla_\xi\Phi_0\|_{L^2}^2)
+C\delta\int_0^t G_2(t)\dif t\\
&+C\delta^2\int_0^t|\dot X(t)|^2\dif t
+C(\delta
+\varepsilon_1)\int_0^t\left(\|\nabla_\xi\Phi\|_{L^2}^2+\|\nabla_\xi\Psi\|_{L^2}^2+G_3(t)+G^s(t)\right)\dif t\\
&\underbrace{-\int_0^t\int_{\mathbb{T}^2}\int_{\mathbb{R}}
\rho\ddiv_\xi\Psi(u_1^s)^{-X}\partial_{\xi_1}\Phi\dif \xi_1\dif \xi^{\prime}\dif t}\limits_{=:M_{13\ast}}.
\end{aligned}
\end{equation}
In a similar way, using \eqref{wcxf}, Young's inequality and Lemma \ref{pvsw}, we can get
\begin{equation}\label{m3}
\begin{aligned}
M_3&\le C\int_0^t\int_{\mathbb{T}^2}\int_{\mathbb{R}}
        |\Psi||\nabla_\xi\Psi|
|\nabla_\xi\Phi|\dif \xi_1\dif \xi^{\prime}\dif t
+\int_0^t\int_{\mathbb{T}^2}\int_{\mathbb{R}}
        \rho (u_1^s)^{-X}\partial_{\xi_1}\Psi
\cdot\nabla_\xi\Phi\dif \xi_1\dif \xi^{\prime}\dif t\\
&\le C\varepsilon_1\int_0^t\left(\|\nabla_\xi\Phi\|_{L^2}^2+\|\nabla_\xi\Psi\|_{L^2}^2\right)\dif t
+\underbrace{\int_0^t\int_{\mathbb{T}^2}\int_{\mathbb{R}}
        \rho (u_1^s)^{-X}\partial_{\xi_1}\Psi
\cdot\nabla_\xi\Phi\dif \xi_1\dif \xi^{\prime}\dif t}\limits_{=:M_{3\ast}}.
\end{aligned}
\end{equation}
\begin{equation}\label{m4}
M_4 \le C\delta^2\int_0^t|\dot X(t)|^2\dif t+C\delta\int_0^t\|\nabla_\xi\Phi\|_{L^2}^2\dif t.
\end{equation}
\begin{equation}\label{m5}
M_5 \le C\delta\int_0^t\left(G_3(t)+G^s(t)+\|\nabla_\xi\Phi\|_{L^2}^2\right)\dif t.
\end{equation}
\begin{equation}\label{m6}
M_6 \le -\frac{\chi}{2}\int_0^t\int_{\mathbb{T}^2}\int_{\mathbb{R}}vp^\prime(v)|\nabla_\xi\Phi|^2\dif \xi_1\dif \xi^{\prime}\dif t+C\int_0^t\|\nabla_\xi Q_1\|_{L^2}^2\dif t.
\end{equation}
\begin{equation}\label{m7}
M_7 \le -\frac{\chi}{2}\int_0^t\int_{\mathbb{T}^2}\int_{\mathbb{R}}vp^\prime(v)|\nabla_\xi\Phi|^2\dif \xi_1\dif \xi^{\prime}\dif t+C\int_0^t\|\nabla_\xi Q_2\|_{L^2}^2\dif t.
\end{equation}
And
\begin{equation}\label{m8}
M_8 \le C\delta\int_0^t\left(G^s(t)+\|\nabla_\xi\Phi\|_{L^2}^2\right)\dif t.
\end{equation}
On the other hand, using Lemma \ref{pvsw}, we deduce that
\begin{equation}\label{m9}
\begin{aligned}
&M_{13\ast}+M_{3\ast}\\
&=\int_0^t\int_{\mathbb{T}^2}\int_{\mathbb{R}}
\left(\rho(u_1^s)^{-X}\right)_{\xi_1}\ddiv_\xi\Psi\Phi\dif \xi_1\dif \xi^{\prime}\dif t
-\int_0^t\int_{\mathbb{T}^2}\int_{\mathbb{R}}
\nabla_\xi\left(\rho(u_1^s)^{-X}\right)\cdot\Psi_{\xi_1}\Phi\dif \xi_1\dif \xi^{\prime}\dif t\\
&\le C(\delta
+\varepsilon_1)\int_0^t\|\nabla_\xi\Psi\|_{L^2}^2\dif t
+C\varepsilon_1\int_0^t\|\nabla_\xi\Phi\|_{L^2}^2\dif t
+C\delta\int_0^tG^s(t)\dif t.
\end{aligned}
\end{equation}

Therefore, combining \eqref{m1}-\eqref{m9}, and using the smallness of $\delta$ and using Lemma \ref{legj}, we conclude that
\begin{equation}\label{hsgj10301}
	\begin{aligned}
		&(1-\chi-C(\delta+\varepsilon_1))\int_0^t\int_{\mathbb{T}^2}\int_{\mathbb{R}}|p^\prime(v)||\nabla_\xi\Phi|^2\dif \xi_1\dif \xi^{\prime}\dif t\\
		&\le
		\chi_1\|\sqrt\rho\Psi\|_{L^2}^2
		+C\|(\Phi_0, \Psi_0,
		\sqrt{\tau}Q_{10}, \sqrt{\tau}Q_{20})\|_{H^3}^2
	+C\delta^2\int_0^t|\dot X(t)|^2\dif t	+C\delta\int_0^t G_2(t)\dif t\\
&\quad		+C(\delta
		+\varepsilon_1)\int_0^t\left(\|\nabla_\xi\Psi\|_{L^2}^2+G_3(t)+G^s(t)\right)\dif t
	+\int_0^t
		\| \ddiv_\xi\Psi\|_{L^2}^2\dif t,
	\end{aligned}
\end{equation}
where $Q_{10}=(\Pi_{10}-\Pi_1^s)(\xi)$ and $Q_{20}=(\Pi_{20}-\Pi_2^s)(\xi)$.
Next, multiplying equation $\eqref{sys1}_3$ by $\rho\nabla_\xi\Psi$ and integrating over $(0, t)\times\mathbb{R}\times\mathbb{T}^2$, we have
\begin{align*}
&\mu\int_0^t\int_{\mathbb{T}^2}\int_{\mathbb{R}}|\nabla_\xi\Psi|^2\dif \xi_1\dif \xi^{\prime}\dif t
+\frac{\mu}{3}\int_0^t\int_{\mathbb{T}^2}\int_{\mathbb{R}}
   |\ddiv_\xi\Psi|^2
\dif \xi_1\dif \xi^{\prime}\dif t\\
=&\tau\int_0^t\int_{\mathbb{T}^2}\int_{\mathbb{R}}\rho
\partial_tQ_1
:\nabla_\xi\Psi\dif \xi_1\dif \xi^{\prime}\dif t
-\tau\sigma\int_0^t\int_{\mathbb{T}^2}\int_{\mathbb{R}}\rho
\partial_{\xi_1}Q_1
:\nabla_\xi\Psi\dif \xi_1\dif \xi^{\prime}\dif t\\
&+\tau\int_0^t\int_{\mathbb{T}^2}\int_{\mathbb{R}}\rho
        ( u\cdot\nabla_\xi Q_1)
:\nabla_\xi\Psi\dif \xi_1\dif \xi^{\prime}\dif t
-\tau\int_0^t\int_{\mathbb{T}^2}\int_{\mathbb{R}}\rho
     \dot{X}(t)\partial_{\xi_1}(\Pi_1^s)^{-X}
:\nabla_\xi\Psi\dif \xi_1\dif \xi^{\prime}\dif t\\
&+\tau\int_0^t\int_{\mathbb{T}^2}\int_{\mathbb{R}}\rho
        (vF\partial_{\xi_1}(\Pi_1^s)^{-X})
:\nabla_\xi\Psi\dif \xi_1\dif \xi^{\prime}\dif t
+\int_0^t\int_{\mathbb{T}^2}\int_{\mathbb{R}}\rho
    v  Q_1
:\nabla_\xi\Psi\dif \xi_1\dif \xi^{\prime}\dif t\\
=:&\sum\limits_{i=1}\limits^6N_i.
\end{align*}
For $N_1$, by applying integration by parts and using $\eqref{sys1}_2$, we get
\begin{align*}
N_1
=&\tau\int_{\mathbb{T}^2}\int_{\mathbb{R}}
\left(\rho Q_1
\cdot\nabla_\xi\Psi-\rho_0 Q_{10}
\cdot\nabla_\xi\Psi_0\right)\dif \xi_1\dif \xi^{\prime}
+\tau\sigma\int_0^t\int_{\mathbb{T}^2}\int_{\mathbb{R}}
\ddiv_\xi(\rho Q_1)\cdot \partial_{\xi_1}\Psi
\dif \xi_1\dif \xi^{\prime}\dif t\\
&-\tau\int_0^t\int_{\mathbb{T}^2}\int_{\mathbb{R}}
\ddiv_\xi (\rho Q_1)\cdot (u\cdot\nabla_\xi\Psi)
        \dif \xi_1\dif \xi^{\prime}\dif t
+\tau\int_0^t\int_{\mathbb{T}^2}\int_{\mathbb{R}}
  \ddiv_\xi (\rho Q_1)\cdot (\dot{X}(t)\partial_{\xi_1}(u^s)^{-X})
    \dif \xi_1\dif \xi^{\prime}\dif t\\
&-\tau\int_0^t\int_{\mathbb{T}^2}\int_{\mathbb{R}}
      \ddiv_\xi (\rho Q_1)\cdot (vF\partial_{\xi_1}(u^s)^{-X})
        \dif \xi_1\dif \xi^{\prime}\dif t\\
&+\tau\int_0^t\int_{\mathbb{T}^2}\int_{\mathbb{R}}
  \ddiv_\xi (\rho Q_1)\cdot (v\nabla_\xi\left(p(v)-p((v^s)^{-X})\right))
    \dif \xi_1\dif \xi^{\prime}\dif t\\
    &-\tau\int_0^t\int_{\mathbb{T}^2}\int_{\mathbb{R}}
      \ddiv_\xi (\rho Q_1)\cdot (v\ddiv_\xi Q_1)
        \dif \xi_1\dif \xi^{\prime}\dif t
+\tau\int_0^t\int_{\mathbb{T}^2}\int_{\mathbb{R}}
  \ddiv_\xi (\rho Q_1)\cdot (v\nabla_\xi Q_2)
    \dif \xi_1\dif \xi^{\prime}\dif t\\
    &+\int_0^t\int_{\mathbb{T}^2}\int_{\mathbb{R}}
  \ddiv_\xi (\rho u)Q_1:\nabla_\xi\Psi
    \dif \xi_1\dif \xi^{\prime}\dif t
    =:\sum\limits_{j=1}\limits^9N_{1j}.
\end{align*}
 By using \eqref{sccs}, Young's inequality and Lemma \ref{pvsw}, we imply that
\[
N_{11}\le \chi_1\tau\|\sqrt\rho Q_{1}\|_{L^2}^2+C(\chi_1)\|\nabla_\xi\Psi\|_{L^2}^2
+C(\tau\|Q_{10}\|_{L^2}^2+\|\nabla_\xi\Psi_0\|_{L^2}^2),
\]
\begin{align*}
N_{12}&=\tau\sigma\int_0^t\int_{\mathbb{T}^2}\int_{\mathbb{R}}
\rho_{\xi_1}Q_1
:\nabla_\xi\Psi\dif \xi_1\dif \xi^{\prime}\dif t
+\tau\sigma\int_0^t\int_{\mathbb{T}^2}\int_{\mathbb{R}}\rho
\partial_{\xi_1}Q_1
:\nabla_\xi\Psi\dif \xi_1\dif \xi^{\prime}\dif t\\
&\le C(\varepsilon_1+\delta)\int_0^t\left(\| Q_1\|_{L^2}^2+\|\nabla_\xi \Psi\|_{L^2}^2\right)\dif t -N_2,
\end{align*}
\[
N_{13}\le \frac{\chi}{4}\int_0^t\int_{\mathbb{T}^2}\int_{\mathbb{R}}|\nabla_\xi\Psi|^2\dif \xi_1\dif \xi^{\prime}\dif t
+C\int_0^t\|\nabla_\xi Q_1\|_{L^2}^2\dif t
+C(\varepsilon_1+\delta)\int_0^t\| Q_1\|_{L^2}^2\dif t,
\]
\[
N_{14}\le C\delta^2\int_0^t|\dot X(t)|^2\dif t+C\delta\int_0^t\|\nabla_\xi Q_1\|_{L^2}^2\dif t
+C(\varepsilon_1+\delta)\int_0^t\| Q_1\|_{L^2}^2\dif t,
\]
\[
N_{15}\le C(\varepsilon_1+\delta)\int_0^t\left(G_3(t)+G^s(t)+\| Q_1\|_{L^2}^2\right)\dif t+C\delta\int_0^t\|\nabla_\xi Q_1\|_{L^2}^2\dif t,
\]
\begin{align*}
N_{16}
    \le \frac{-\chi}{2}\int_0^t\int_{\mathbb{T}^2}\int_{\mathbb{R}}vp^\prime(v)|\nabla_\xi\Phi|^2\dif \xi_1\dif \xi^{\prime}\dif t
    +C\int_0^t\|\nabla_\xi Q_1\|_{L^2}^2\dif t
    +C(\varepsilon_1+\delta)\int_0^t\left(G^s(z)+\| Q_1\|_{L^2}^2\right)\dif t,
  \end{align*}
\[
 N_{17} \le C(\varepsilon_1+\delta)\int_0^t\| Q_1\|_{L^2}^2\dif t
 +C\int_0^t\|\nabla_\xi Q_1\|_{L^2}^2\dif t,
\]
\[
  N_{18} \le C\int_0^t\left(\|\nabla_\xi Q_1\|_{L^2}^2+\|\nabla_\xi Q_2\|_{L^2}^2\right)\dif t
  +C(\varepsilon_1+\delta)\int_0^t\| Q_1\|_{L^2}^2\dif t,
\]
and
\[
N_{19}\le C(\varepsilon_1+\delta)\int_0^t\left(\| Q_1\|_{L^2}^2+\|\nabla_\xi \Psi\|_{L^2}^2\right)\dif t.
\]
Thus, combining the above estimates, we derive that
\begin{equation}\label{n1}
\begin{aligned}
&N_1+N_2\\&
\le \chi_1\tau\|\sqrt\rho Q_{1}\|_{L^2}^2+C(\chi_1)\|\nabla_\xi\Psi\|_{L^2}^2
+C(\tau\|Q_{10}\|_{L^2}^2+\|\nabla_\xi\Psi_0\|_{L^2}^2)
+C\delta^2\int_0^t|\dot X(t)|^2\dif t\\
&
+(\frac{\chi}{2}+C(\delta+\varepsilon_1))\int_0^t\int_{\mathbb{T}^2}\int_{\mathbb{R}}|\nabla_\xi\Psi|^2\dif \xi_1\dif \xi^{\prime}\dif t
+\frac{-\chi}{4}\int_0^t\int_{\mathbb{T}^2}\int_{\mathbb{R}}vp^\prime(v)|\nabla_\xi\Phi|^2\dif \xi_1\dif \xi^{\prime}\dif t\\
&+C(\delta+\varepsilon_1)\int_0^t\left(G_3(t)+G^s(t)+\|Q_1\|_{L^2}^2\right)\dif t
+C\int_0^t\left(\|\nabla_\xi Q_1\|_{L^2}^2+\|\nabla_\xi Q_2\|_{L^2}^2\right)\dif t.
\end{aligned}
\end{equation}
Similarly, we obtain that
\begin{equation}\label{n3}
N_3\le \frac{\chi}{4}\int_0^t\int_{\mathbb{T}^2}\int_{\mathbb{R}}|\nabla_\xi\Psi|^2\dif \xi_1\dif \xi^{\prime}\dif t
+C\int_0^t\|\nabla_\xi Q_1\|_{L^2}^2\dif t,
\end{equation}
\begin{equation}\label{n4}
N_4\le C\delta^2\int_0^t|\dot X(t)|^2\dif t+C\delta\int_0^t\|\nabla_\xi \Psi\|_{L^2}^2\dif t,
\end{equation}
\begin{equation}\label{n5}
N_5\le C\delta\int_0^t\left(G_3(t)+G^s(t)+\|\nabla_\xi \Psi\|_{L^2}^2\right)\dif t,
\end{equation}
and
\begin{equation}\label{n6}
N_6\le\frac{\mu}{2}
\int_0^t\int_{\mathbb{T}^2}\int_{\mathbb{R}}|\nabla_\xi\Psi|^2\dif \xi_1\dif \xi^{\prime}\dif t
+\frac{1}{2\mu}
\int_0^t\int_{\mathbb{T}^2}\int_{\mathbb{R}}|Q_1|^2\dif \xi_1\dif \xi^{\prime}\dif t.
\end{equation}

Therefore, combining \eqref{n1}-\eqref{n6}, and using the smallness of $\delta$ and $\varepsilon_1$, we conclude that
\begin{equation}\label{hsu1}
\begin{aligned}
&(\frac{\mu}{2}-\frac{\chi}{2}-C(\delta+\varepsilon_1))\int_0^t\int_{\mathbb{T}^2}\int_{\mathbb{R}}|\nabla_\xi\Psi|^2\dif \xi_1\dif \xi^{\prime}\dif t
+\frac{\mu}{3}\int_0^t\int_{\mathbb{T}^2}\int_{\mathbb{R}}
   |\ddiv_\xi\Psi|^2
\dif \xi_1\dif \xi^{\prime}\dif t\\
&\le
\chi_1\tau\|\sqrt\rho Q_{1}\|_{L^2}^2
+C(\|\nabla_\xi\Psi\|_{L^2}^2+\tau\|Q_{10}\|_{L^2}^2+\|\nabla_\xi\Psi_0\|_{L^2}^2)
+C\delta^2\int_0^t|\dot X(t)|^2\dif t\\
&
+\frac{-\chi}{2}\int_0^t\int_{\mathbb{T}^2}\int_{\mathbb{R}}p^\prime(v)|\nabla_\xi\Phi|^2\dif \xi_1\dif \xi^{\prime}\dif t
+C\int_0^t\left(
\|\nabla_\xi Q_1\|_{L^2}^2+\|\nabla_\xi Q_2\|_{L^2}^2\right)\dif t\\
&\quad+C(\delta+\varepsilon_1)\int_0^t\left(G^s(t)+G_3(t)\right)\dif t
+\left(\frac{1}{2\mu}+C(\delta+\varepsilon_1)\right)\int_0^t\|Q_1\|_{L^2}^2\dif t.
\end{aligned}
\end{equation}

Similarly, multiplying equation $\eqref{sys1}_4$ by $\rho\ddiv_\xi\Psi$ and integrating over $(0, t)\times\mathbb{R}\times\mathbb{T}^2$, we have
\begin{equation}\label{hsu2}
\begin{aligned}
&\lambda\int_0^t\int_{\mathbb{T}^2}\int_{\mathbb{R}}
   |\ddiv_\xi\Psi|^2
\dif \xi_1\dif \xi^{\prime}\dif t\\
&\le
\chi_1\tau\|\sqrt\rho Q_{2}\|_{L^2}^2
+C(\|\nabla_\xi\Psi\|_{L^2}^2+\tau\|Q_{20}\|_{L^2}^2+\|\nabla_\xi\Psi_0\|_{L^2}^2)
+C\delta^2\int_0^t|\dot X(t)|^2\dif t\\
&\quad
\frac{-\chi}{2}\int_0^t\int_{\mathbb{T}^2}\int_{\mathbb{R}}p^\prime(v)|\nabla_\xi\Phi|^2\dif \xi_1\dif \xi^{\prime}\dif t
+(\frac{\chi}{2}+C(\delta+\varepsilon_1))\int_0^t\int_{\mathbb{T}^2}\int_{\mathbb{R}}|\nabla_\xi\Psi|^2\dif \xi_1\dif \xi^{\prime}\dif t\\
&\quad+C\int_0^t\left(
\|\nabla_\xi Q_1\|_{L^2}^2+\|\nabla_\xi Q_2\|_{L^2}^2\right)\dif t
+C(\delta+\varepsilon_1)\int_0^t\left(G^s(z)+G_3(z)\right)\dif t\\
&\quad+\left(\frac{1}{\lambda}+C(\delta+\varepsilon_1)\right)
\int_0^t\|Q_2\|_{L^2}^2\dif t.
\end{aligned}
\end{equation}

Therefore, choosing $\delta,\chi_1,\varepsilon_1$ sufficiently small, and combining \eqref{hsu1}, \eqref{hsu2} with Lemmas \ref{le0} and \ref{legj}, we have
\begin{equation}\label{hsgj10302}
	\begin{aligned}
		&(\frac{\mu}{2}-\chi-C(\delta+\varepsilon_1))\int_0^t\int_{\mathbb{T}^2}\int_{\mathbb{R}}|\nabla_\xi\Psi|^2\dif \xi_1\dif \xi^{\prime}\dif t
		+\left(\frac{\mu}{3}+\lambda\right)\int_0^t\int_{\mathbb{T}^2}\int_{\mathbb{R}}
		|\ddiv_\xi\Psi|^2
		\dif \xi_1\dif \xi^{\prime}\dif t\\
		&\le\Big(\chi+(\mu+\frac{3}{4}\lambda)(1+\chi+C(\nu+\varepsilon_1+\delta))\Big)\int_0^t\int_{\mathbb{T}^2}\int_{\mathbb{R}}|p^\prime(v)||\nabla_\xi\Phi|^2\dif \xi_1\dif \xi^{\prime}\dif t
		\\
		&\quad
		+C\|(\Phi_0, \Psi_0
		\sqrt{\tau}Q_{10}, \sqrt{\tau}Q_{20})\|_{H^1}^2.
	\end{aligned}
\end{equation}

Now, we firstly note that $\Delta_\xi u=\nabla_\xi\ddiv_{\xi}u-\nabla_{\xi}\times\nabla_{\xi}\times u$. Subsequently, multiplying \eqref{hsgj10301} and \eqref{hsgj10302} by $C_2(\frac{5}{6}\mu+\frac{5}{8}\lambda)$ and $C_3$, which are arbitrary positive constants,  respectively, and combining \eqref{ljgu1.1}, we derive that
\begin{align*}
	&C_5\int_0^t\int_{\mathbb{T}^2}\int_{\mathbb{R}}|p^\prime(v)||\nabla_\xi\Phi|^2\dif \xi_1\dif \xi^{\prime}\dif t+(\frac{\mu}{2}-\chi-C(\delta+\varepsilon_1))\int_0^t\int_{\mathbb{T}^2}\int_{\mathbb{R}}|\nabla_\xi\times\Psi|^2\dif \xi_1\dif \xi^{\prime}\dif t\\
&\quad	+C_6\int_0^t\int_{\mathbb{T}^2}\int_{\mathbb{R}}
	|\ddiv_\xi\Psi|^2
	\dif \xi_1\dif \xi^{\prime}\dif t\\
	&\le 
	\chi_1\|\sqrt\rho\Psi\|_{L^2}^2
	+C\|(\Phi_0, \Psi_0
	\sqrt{\tau}Q_{10}, \sqrt{\tau}Q_{20})\|_{H^1}^2.
	\\
	&\quad+C\delta^2\int_0^t|\dot X(t)|^2\dif t
	+C(\delta
	+\varepsilon_1)\int_0^t\left(G_3(t)+G^s(t)\right)\dif t+C\delta\int_0^t G_2(t)\dif t,
\end{align*}
where $C_5=\left(C_2(\frac{5}{6}\mu+\frac{5}{8}\lambda)(1-\chi-C(\delta+\varepsilon_1))-C_3\Big(\chi+(\frac{5}{6}\mu+\frac{5}{8}\lambda)(1+\chi+C(\nu+\varepsilon_1+\delta))\Big)\right)$ and $C_6=\left(C_3(\frac{5}{6}\mu+\lambda-\chi-C(\delta+\varepsilon_1))-C_2(\frac{5}{6}\mu+\frac{5}{8}\lambda)\right)$. Consequently, the condition $C_5, C_6>0$ if and only if $1>\frac{C_3}{C_2}>\frac{\frac{5}{6}\mu+\frac{5}{8}\lambda}{\frac{5}{6}\mu\lambda}$.

Thus, we get the desire result.

\end{proof}

Thus, choosing $\chi_1$ small enough, using the smallness of $\delta$ and $\varepsilon_1$, the desired result follows from an application of Lemma \ref{le0} and Lemma 2.1 in \cite{WY}.
\begin{corollary} \label{coro111}
	Under the hypotheses of Proposition \ref{p1}, there exists constant $C>0$ independent of $\tau, \nu, \delta, \varepsilon_1, T$, such that for $t\in[0, T]$, we have
	\begin{equation}\label{hsgjcoll}
		\begin{aligned}
			&\|v-(v^s)^{-X}\|_{L^2}^2+\|u-(u^s)^{-X}\|_{L^2}^2
			+\tau\|\Pi_1-(\Pi_1^s)^{-X}\|_{L^2}^2
			+\tau\|\Pi_2-(\Pi_2^s)^{-X}\|_{L^2}^2\\
			&\quad
			+\int_0^t\|(\Pi_1-(\Pi_1^s)^{-X}, \Pi_2-(\Pi_2^s)^{-X})\|_{L^2}\dif t
			+\int_0^t\|(\nabla_\xi(v-(v^s)^{-X}), \nabla_\xi(u-(u^s)^{-X}))\|_{L^2}\dif t\\
			&\quad+\delta\int_0^t|\dot{X}(t)|^2
			\dif t+\int_0^t(G_2(t)+G_3(t)+G^s(t))
			\dif t\\
			&\le
			C\|(v_{0}-v^s, u_1-u^s,
			\sqrt{\tau}(\Pi_{10}-\Pi_1^s), \sqrt{\tau}(\Pi_{20}-\Pi_2^s))\|_{H^1}^2,
		\end{aligned}
	\end{equation}
	where $G_2(t), G_3(t)$ are defined in Lemma \ref{le4.5} and $G^s(t)$ is defined in Lemma \ref{le0}, respectively.
\end{corollary}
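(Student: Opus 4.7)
My plan is to derive \eqref{hsgjcoll} by linearly combining the weighted relative entropy estimate \eqref{ljgu1.1} of Lemma \ref{le0} with the combined dissipative inequality derived at the end of Lemma \ref{lehs1} (the one displayed immediately before the statement of this corollary). The entropy estimate \eqref{ljgu1.1} already produces on its left-hand side the zeroth-order quantities $\|(v-(v^s)^{-X},u-(u^s)^{-X})\|_{L^2}^2+\tau\|(\Pi_1-(\Pi_1^s)^{-X},\Pi_2-(\Pi_2^s)^{-X})\|_{L^2}^2$, together with the time integrals of $\delta|\dot X|^2$, $G_2$, $G_3$, $G^s$, and $\|(\Pi_1-(\Pi_1^s)^{-X},\Pi_2-(\Pi_2^s)^{-X})\|_{L^2}^2$ (the latter from the $G_5,G_7$ contributions). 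The only obstruction to closing with \eqref{ljgu1.1} alone is the unsigned term $(\tfrac{5\mu}{6}+\tfrac{5\lambda}{8})(1+\chi+C(\nu+\varepsilon_1))\int_0^t\!\int|p'(v)||\nabla\Phi|^2\,d\xi\,dt$ on its right-hand side.

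Precisely this missing coercivity is supplied by the combined dissipative inequality of Lemma \ref{lehs1}, which, after choosing the ratio $C_3/C_2\in\bigl(\tfrac{5\mu/6+5\lambda/8}{5\mu/6+\lambda},\,1\bigr)$, controls $C_5\int_0^t\!\int|p'(v)||\nabla_\xi\Phi|^2+(\tfrac{\mu}{2}-\chi-C(\delta+\varepsilon_1))\int_0^t\!\int|\nabla_\xi\times\Psi|^2+C_6\int_0^t\!\int|\ddiv_\xi\Psi|^2$ at the price of $\chi_1\|\sqrt\rho\Psi\|_{L^2}^2+C\|(\Phi_0,\Psi_0,\sqrt\tau Q_{10},\sqrt\tau Q_{20})\|_{H^1}^2+C\delta^2\!\int_0^t|\dot X|^2+C(\delta+\varepsilon_1)\!\int_0^t(G_3+G^s)\,dt+C\delta\!\int_0^t G_2\,dt$. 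I would add \eqref{ljgu1.1} to this combined estimate and then absorb in a definite order: first fix $\chi$ small enough that $C_5>(\tfrac{5\mu}{6}+\tfrac{5\lambda}{8})(1+\chi+C(\nu+\varepsilon_1))$, so that the bad $|p'(v)||\nabla\Phi|^2$ term produced by the entropy inequality is dominated; next choose $\chi_1$ small enough that $\chi_1\|\sqrt\rho\Psi\|_{L^2}^2$ is dominated by the zeroth-order coercive $L^2$-control produced by \eqref{ljgu1.1}; finally take $\nu,\delta,\varepsilon_1$ small so that the $C(\delta+\varepsilon_1)$- and $C\delta^2$-factors multiplying $G_2,G_3,G^s,|\dot X|^2$ are strictly smaller than the corresponding coercive coefficients already present on the left of \eqref{ljgu1.1}. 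The full gradient $\|\nabla_\xi\Psi\|_{L^2}^2$ is then reconstituted from $\|\nabla_\xi\times\Psi\|_{L^2}^2+\|\ddiv_\xi\Psi\|_{L^2}^2$ via the identity $\Delta_\xi u=\nabla_\xi\ddiv_\xi u-\nabla_\xi\times\nabla_\xi\times u$.

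The remaining step is a final application of Lemma 2.1 of \cite{WY}, which converts $H(v|(v^s)^{-X})$ into $|v-(v^s)^{-X}|^2$ up to a universal multiplicative constant, yielding exactly \eqref{hsgjcoll}. The main obstacle is the coefficient bookkeeping: the interval $(\tfrac{5\mu/6+5\lambda/8}{5\mu/6+\lambda},\,1)$ for $C_3/C_2$ is nonempty only because $\lambda>0$, and one must verify that the hierarchy $\chi\ll\chi_1\ll\nu\le C\sqrt\delta\ll\varepsilon_1\ll 1$ is simultaneously compatible with the smallness thresholds already imposed in Lemmas \ref{le4.6}, \ref{le0}, \ref{legj}, and \ref{lehs1}. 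Once this is pinned down, every non-initial-data term on the right admits an absorption constant strictly less than one, and the argument closes in a single step.
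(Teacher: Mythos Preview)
Your proposal is correct and follows essentially the same route as the paper: the paper's own proof is the single sentence preceding the corollary, which says to combine Lemma~\ref{lehs1} with Lemma~\ref{le0}, choose $\chi_1$ small, use the smallness of $\delta,\varepsilon_1$, and invoke Lemma~2.1 of \cite{WY}. One minor remark: the hierarchy you propose, $\chi\ll\chi_1\ll\nu$, is stronger than needed---$\chi$ and $\chi_1$ are auxiliary absorption parameters that can be chosen independently small (and independently of $\nu,\delta,\varepsilon_1$), so no ordering between them or with $\nu$ is required.
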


 \begin{lemma}\label{lehs3}
Under the hypotheses of Proposition \ref{p1}, there exists constant $C>0$ independent of $\tau, \nu, \delta, \varepsilon_1, T$, such that for $t\in[0, T]$, we have
\begin{equation}\label{hsgj1.3}
\begin{aligned}
&\int_0^t\left(\|\nabla^2_\xi(v-(v^s)^{-X})\|_{H^1}^2
 +\|\nabla^2_\xi(u-(u^s)^{-X})\|_{H^1}^2\right)\dif t\\&\le 	C\|(v_{0}-v^s, u_1-u^s,
\sqrt{\tau}(\Pi_{10}-\Pi_1^s), \sqrt{\tau}(\Pi_{20}-\Pi_2^s))\|_{H^3}^2.
\end{aligned}
\end{equation}
\end{lemma}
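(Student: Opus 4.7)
The plan is to mimic the first–order dissipative argument in Lemma \ref{lehs1}, but now at second and third order in the spatial derivatives, using the differentiated perturbation system \eqref{sys3} as the starting point. Concretely, for each multi-index $\alpha$ with $|\alpha|=1,2$, I will apply $\partial_\xi^\alpha$ to the momentum equation in \eqref{sys1} and then test against $\rho\,\nabla_\xi\partial_\xi^\alpha\Phi$. Integration by parts in time on the leading term $\rho\partial_t\Psi\cdot\nabla_\xi\partial_\xi^\alpha\Phi$, followed by substitution of the continuity equation $\eqref{sys1}_1$, converts the time derivative into a $\ddiv_\xi$ structure that produces a genuine dissipation $-p'(v)|\nabla_\xi\partial_\xi^\alpha\Phi|^2$ on the left, exactly as in \eqref{hsgj10301}. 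The remaining source terms (transport, shift, flux $F$, stress $\nabla Q_1,\nabla Q_2$, and pressure-gradient errors involving $(v^s)^{-X}_{\xi_1}$) are all handled by Young's inequality, Lemma \ref{pvsw}, and Sobolev embedding, exactly as for the terms $M_i$ in Lemma \ref{lehs1}, picking up constants of size $\delta+\varepsilon_1$ which can be absorbed by the small-parameter hypothesis \eqref{3.4}.

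Next, I test the differentiated $\Pi_1$-equation $\eqref{sys3}_3$ against $\rho\,\nabla_\xi\partial_\xi^\alpha\Psi$ and the differentiated $\Pi_2$-equation $\eqref{sys3}_4$ against $\rho\,\ddiv_\xi\partial_\xi^\alpha\Psi$. Again, moving the $\tau\rho\partial_t Q_i$ term through an integration by parts in time and substituting $\eqref{sys1}_2$ produces a dissipative $|\nabla_\xi\partial_\xi^\alpha\Psi|^2$ and $|\ddiv_\xi\partial_\xi^\alpha\Psi|^2$ on the left, parallel to \eqref{hsu1} and \eqref{hsu2}. The price is boundary-in-time quantities of the form $\tau\|\nabla_\xi\partial_\xi^\alpha Q_i\|_{L^2}^2$ and $\|\nabla_\xi\partial_\xi^\alpha \Psi\|_{L^2}^2$, which are exactly the quantities already controlled by Lemma \ref{legj}, together with $\int_0^t\|\nabla_\xi\partial_\xi^\alpha Q_i\|_{L^2}^2\,\dif t$, which is also controlled by \eqref{gjgj1.1}. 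Using the identity $\Delta_\xi u=\nabla_\xi\ddiv_\xi u-\nabla_\xi\times\nabla_\xi\times u$ and the same positive linear combination $C_2(\tfrac{5}{6}\mu+\tfrac{5}{8}\lambda)$ vs.\ $C_3$ that was used at the end of Lemma \ref{lehs1}, I will combine the two families of estimates so that the coefficient in front of $\int_0^t\int|p'(v)||\nabla_\xi\partial_\xi^\alpha\Phi|^2$ on the right-hand side is strictly smaller than the one we obtained on the left, exactly as in the derivation of Corollary \ref{coro111}.

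The hard part, as in the first-order case, will be controlling the commutator terms arising when $\partial_\xi^\alpha$ falls on products such as $v\ddiv_\xi\Psi$, $vF\partial_{\xi_1}(v^s)^{-X}$, or $\partial_\xi^\alpha(u\cdot\nabla_\xi\Psi)$: when all derivatives hit the factor with the lowest regularity, one must use a Gagliardo–Nirenberg–type split (for instance $\|\partial_\xi^\alpha v\|_{L^3}\le\|\partial_\xi^\alpha v\|_{H^1}$ combined with $\|\nabla_\xi\Phi\|_{L^6}\lesssim\|\nabla_\xi\Phi\|_{H^1}$ on $\mathbb R\times\mathbb T^2$) rather than a direct $L^\infty$ bound. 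These estimates are exactly the ones already used in Lemma \ref{legj} and they yield a small factor $\varepsilon_1+\delta$. The factor of $\sqrt\tau$ that appears in front of any $Q_i$-term is preserved throughout, so the boundary terms involving $Q_{10}$, $Q_{20}$ only enter with coefficient $\tau$, matching the right-hand side of \eqref{hsgj1.3}.

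Finally, I will sum the resulting inequalities over $|\alpha|=1,2$, add a small multiple of the first-order dissipative estimate \eqref{hsgjcoll} and of the higher-order energy estimate \eqref{gjgj1.1}, and absorb the terms $\delta\int_0^t|\dot X(t)|^2\dif t$, $\int_0^t(G_2+G_3+G^s)\dif t$, and $\chi_1\|\sqrt\rho(u-(u^s)^{-X})\|_{L^2}^2$ using Corollary \ref{coro111}. Since all these absorbed terms are ultimately bounded by the $H^3$ norm of the initial perturbation times a universal constant, we obtain the desired bound \eqref{hsgj1.3} with a constant independent of $\tau$, $\nu$, $\delta$, $\varepsilon_1$, and $T$.
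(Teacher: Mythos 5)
Your proposal follows essentially the same route as the paper: pair the differentiated momentum equation with $\nabla_\xi\partial_\xi^\alpha\Phi$ (using the continuity equation to absorb the time derivative) to extract the $\nabla_\xi^2\Phi$ dissipation, pair the differentiated constitutive equation(s) with $\nabla_\xi\partial_\xi^\alpha\Psi$ (using the momentum equation to absorb $\tau\partial_t Q$) to extract the $\nabla_\xi^2\Psi$ dissipation, combine the two with suitable weights so the cross terms are absorbed, and close with Lemma \ref{legj} and Corollary \ref{coro111}. The only cosmetic differences are that the paper obtains the needed $\ddiv_\xi$ dissipation from the symmetric part of the $\Pi_1$ equation alone (rather than also testing the $\Pi_2$ equation) and uses the simpler weights $\mu/6$ and $1$ instead of the $C_2$, $C_3$ combination from the first-order lemma; neither affects the argument.
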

\begin{proof}
Multiplying $\eqref{sys3}_2$ by $\partial_\xi^\alpha\nabla_\xi\Phi$ for $|\alpha|=1, 2$, and integrating over $(0, t)\times\mathbb{R}\times\mathbb{T}^2$, we have
\begin{align*}
&-\int_0^t\int_{\mathbb{T}^2}\int_{\mathbb{R}}vp^\prime(v)|\partial_\xi^\alpha\nabla_\xi\Phi|^2
\dif \xi_1\dif \xi^{\prime}\dif t\\
&=\int_0^t\int_{\mathbb{T}^2}\int_{\mathbb{R}}\partial_t\partial_\xi^\alpha\Psi\cdot\partial_\xi^\alpha\nabla_\xi\Phi
\dif \xi_1\dif \xi^{\prime}\dif t
-\sigma\int_0^t\int_{\mathbb{T}^2}\int_{\mathbb{R}}\partial_{\xi_1}\partial_\xi^\alpha\Psi\cdot\partial_\xi^\alpha\nabla_\xi\Phi
\dif \xi_1\dif \xi^{\prime}\dif t\\
&\quad+\int_0^t\int_{\mathbb{T}^2}\int_{\mathbb{R}}\partial_\xi^\alpha\left( u\cdot\nabla_\xi\Psi\right)\cdot\partial_\xi^\alpha\nabla_\xi\Phi
\dif \xi_1\dif \xi^{\prime}\dif t
-\int_0^t\int_{\mathbb{T}^2}\int_{\mathbb{R}}\dot{X}(t)\partial_{\xi_1}\partial_\xi^\alpha(u^s)^{-X}\cdot\partial_\xi^\alpha\nabla_\xi\Phi
\dif \xi_1\dif \xi^{\prime}\dif t\\
&\quad+\int_0^t\int_{\mathbb{T}^2}\int_{\mathbb{R}}
\left(\partial_\xi^\alpha\left(v\nabla_\xi\left(p(v)-p((v^s)^{-X})\right)\right)-vp^\prime(v)\partial_\xi^\alpha\nabla_\xi\Phi\right)\cdot\partial_\xi^\alpha\nabla_\xi\Phi
\dif \xi_1\dif \xi^{\prime}\dif t\\
&\quad+\int_0^t\int_{\mathbb{T}^2}\int_{\mathbb{R}}\partial_\xi^\alpha\left(vF\partial_{\xi_1}(u^s)^{-X}\right)\cdot\partial_\xi^\alpha\nabla_\xi\Phi
\dif \xi_1\dif \xi^{\prime}\dif t
-\int_0^t\int_{\mathbb{T}^2}\int_{\mathbb{R}}\partial_\xi^\alpha\left(v\ddiv_\xi Q_1\right)\cdot\partial_\xi^\alpha\nabla_\xi\Phi
\dif \xi_1\dif \xi^{\prime}\dif t\\
&\quad
-\int_0^t\int_{\mathbb{T}^2}\int_{\mathbb{R}}\partial_\xi^\alpha\left(v\nabla_\xi Q_2\right)\cdot\partial_\xi^\alpha\nabla_\xi\Phi
\dif \xi_1\dif \xi^{\prime}\dif t=:\sum\limits_{i=1}\limits^8A_i.
\end{align*}
For $A_1$, applying equation $\eqref{sys3}_1$ and performing straightforward computation yields
\begin{align*}
A_1=&\int_{\mathbb{T}^2}\int_{\mathbb{R}}\left(\partial_\xi^\alpha\Psi\cdot\partial_\xi^\alpha\nabla_\xi\Phi
-\partial_\xi^\alpha\Psi_0\cdot\partial_\xi^\alpha\nabla_\xi\Phi_0\right)
\dif \xi_1\dif \xi^{\prime}
-\sigma\int_0^t\int_{\mathbb{T}^2}\int_{\mathbb{R}}
\partial_\xi^\alpha\Psi\cdot\partial_{\xi_1}\partial_\xi^\alpha\nabla_\xi\Phi
\dif \xi_1\dif \xi^{\prime}\dif t\\
&
+\int_0^t\int_{\mathbb{T}^2}\int_{\mathbb{R}}
\partial_\xi^\alpha\Psi\cdot\partial_\xi^\alpha\nabla_\xi\left(u\cdot\nabla_\xi\Phi\right)
\dif \xi_1\dif \xi^{\prime}\dif t
-\int_0^t\int_{\mathbb{T}^2}\int_{\mathbb{R}}\dot X(t)
\partial_\xi^\alpha\Psi\cdot\partial_{\xi_1}\partial_\xi^\alpha\nabla_\xi(v^s)^{-X}
\dif \xi_1\dif \xi^{\prime}\dif t\\
&
+\int_0^t\int_{\mathbb{T}^2}\int_{\mathbb{R}}
\partial_\xi^\alpha\Psi\cdot\partial_\xi^\alpha\nabla_\xi\left(vF\partial_{\xi_1}(v^s)^{-X}\right)
\dif \xi_1\dif \xi^{\prime}\dif t
-\int_0^t\int_{\mathbb{T}^2}\int_{\mathbb{R}}
\partial_\xi^\alpha\Psi\cdot\partial_\xi^\alpha\nabla_\xi\left(v\ddiv_\xi\Psi\right)
\dif \xi_1\dif \xi^{\prime}\dif t\\
=:&\sum\limits_{j=1}\limits^6A_{1j}.
\end{align*}
Using Young's inequality, we deduce that
\[
A_{11}\le \frac{1}{2}\left(\|\partial_\xi^\alpha\Psi\|_{L^2}^2+\|\partial_\xi^\alpha\nabla_\xi\Phi\|_{L^2}^2
+\|\partial_\xi^\alpha\Psi_0\|_{L^2}^2+\|\partial_\xi^\alpha\nabla_\xi\Phi_0\|_{L^2}^2\right).
\]
By integration by parts, we get
\[
A_{12}=\sigma\int_0^t\int_{\mathbb{T}^2}\int_{\mathbb{R}}
\partial_{\xi_1}\partial_\xi^\alpha\Psi\cdot\partial_\xi^\alpha\nabla_\xi\Phi
\dif \xi_1\dif \xi^{\prime}\dif t
=-A_2.
\]
Next, using similar estimates as in Lemma \ref{legj}, we obtain that
\begin{align*}
A_{13}\le C(\varepsilon_1+\delta)\int_0^t\left(\|\partial_\xi^\alpha\Psi\|_{H^1}^2+\|\nabla_\xi\Phi\|_{H^{|\alpha|}}^2\right)\dif t
+\int_0^t\int_{\mathbb{T}^2}\int_{\mathbb{R}}\partial_\xi^\alpha\Psi\cdot\left(u\cdot\partial_\xi^\alpha\nabla_\xi^2\Phi\right)
\dif \xi_1\dif \xi^{\prime}\dif t,
\end{align*}
\begin{align*}
A_{14}
\le C\delta^2\int_0^t|\dot X(t)|^2\dif t+C\delta\int_0^t\|\partial_\xi^\alpha\Psi\|_{L^2}^2\dif t,
\end{align*}
\begin{align*}
A_{15}\le C\delta\int_0^t\left(\|\nabla_\xi\Psi\|_{H^{|\alpha|}}^2+\|\nabla_\xi\Phi\|_{H^{|\alpha|}}^2+G_3(t)+G^s(t)\right)
\dif t,
\end{align*}
and
\begin{align*}
A_{16}
\le \int_0^t\int_{\mathbb{T}^2}\int_{\mathbb{R}}
v|\partial_\xi^\alpha\ddiv_\xi\Psi|^2
\dif \xi_1\dif \xi^{\prime}\dif t
+C(\delta+\varepsilon_1)\int_0^t\|\nabla_\xi\Psi\|_{H^{|\alpha|}}^2\dif t.
\end{align*}
Therefore, combining above estimates, we have
\begin{equation}\label{a1a2}
\begin{aligned}
A_1&+A_2\le \frac{1}{2}\left(\|\partial_\xi^\alpha\Psi\|_{L^2}^2+\|\partial_\xi^\alpha\nabla_\xi\Phi\|_{L^2}^2
+\|\partial_\xi^\alpha\Psi_0\|_{L^2}^2+\|\partial_\xi^\alpha\nabla_\xi\Phi_0\|_{L^2}^2\right)
+C\delta^2\int_0^t|\dot X(t)|^2\dif t\\
&
+C(\varepsilon_1+\delta)\int_0^t\left(\|\nabla_\xi\Psi\|_{H^{|\alpha|}}^2+\|\nabla_\xi\Phi\|_{H^{|\alpha|}}^2\right)\dif t
+\int_0^t\int_{\mathbb{T}^2}\int_{\mathbb{R}}
v|\partial_\xi^\alpha\ddiv_\xi\Psi|^2
\dif \xi_1\dif \xi^{\prime}\dif t
\\
&+C\delta\int_0^t\left(G_3(t)+G^s(t)\right)
\dif t
+\underbrace{\int_0^t\int_{\mathbb{T}^2}\int_{\mathbb{R}}\partial_\xi^\alpha\Psi\cdot\left(u\cdot\partial_\xi^\alpha\nabla_\xi^2\Phi\right)
\dif \xi_1\dif \xi^{\prime}\dif t}\limits_{=:A_{1\ast}}.
\end{aligned}
\end{equation}
Similarly, for $A_3$, we get
\begin{equation}\label{a3}
\begin{aligned}
A_3
\le C(\varepsilon_1+\delta)
\int_0^t\|(\nabla_\xi\Psi, \nabla_\xi\Phi)\|_{H^{|\alpha|}}^2\dif t
+\underbrace{\int_0^t\int_{\mathbb{T}^2}\int_{\mathbb{R}}\left( u\cdot\partial_\xi^\alpha\nabla_\xi\Psi\right)\cdot\partial_\xi^\alpha\nabla_\xi\Phi
\dif \xi_1\dif \xi^{\prime}\dif t}\limits_{=:A_{3\ast}},
\end{aligned}
\end{equation}
\begin{equation}\label{a4}
A_4\le C\delta^2\int_{0}^{t}|\dot{X}(t)|^2\dif t+C\delta\int_0^t\|\partial_\xi^\alpha\nabla_\xi\Phi\|_{L^2}^2\dif t,
\end{equation}
\begin{equation}\label{a5}
A_5\le C(\varepsilon_1+\delta)\int_0^t\left(G^s(z)+\|\nabla_\xi\Phi\|_{H^{|\alpha|}}^2\right)\dif t,
\end{equation}
\begin{equation}\label{a6}
\begin{aligned}
A_6\le C\delta\int_0^t\left(\|\nabla_\xi\Psi\|_{H^{|\alpha|-1}}^2+\|\nabla_\xi\Phi\|_{H^{|\alpha|}}^2+G_3(t)+G^s(t)\right)
\dif t,
\end{aligned}
\end{equation}
For $A_7$ and $A_8$, using Young's inequality, we get
\begin{equation}\label{a8}
\begin{aligned}
A_7+A_8\le-\frac{1}{8}\int_0^t\int_{\mathbb{T}^2}\int_{\mathbb{R}}vp^\prime(v)|\partial_\xi^\alpha\nabla_\xi\Phi|^2
\dif \xi_1\dif \xi^{\prime}\dif t+C\int_0^t\|(\nabla_\xi Q_1, \nabla_\xi Q_2)\|_{H^{|\alpha|}}^2\dif t
.
\end{aligned}
\end{equation}
In addition, using Lemma \ref{pvsw}, we get
\begin{equation}\label{a9}
\begin{aligned}
A_{1\ast}+A_{3\ast}\le C(\delta+\varepsilon_1)\int_0^t\left(\|\partial_\xi^\alpha\Psi\|_{L^2}^2+\|\partial_\xi^\alpha\nabla_\xi\Phi\|_{L^2}^2\right)
\dif t.
\end{aligned}
\end{equation}

Therefore, combining \eqref{a1a2}-\eqref{a9} and choosing $\delta$ and $\varepsilon_1$ sufficiently small, we conclude that
\begin{equation}\label{gjhsgj3}
\begin{aligned}
&\frac{3}{4}\int_0^t\|\sqrt{|vp^\prime(v)}|\nabla_\xi^2\Phi\|_{H^1}^2
\dif \xi_1\dif \xi^{\prime}\dif t\\
&\le \frac{1}{2}\left(\|\nabla_\xi\Psi\|_{H^1}^2+\|\nabla_\xi^2\Phi\|_{H^1}^2
+\|\nabla_\xi\Psi_0\|_{H^1}^2+\|\nabla_\xi^2\Phi_0\|_{H^1}^2\right)
+C\delta^2\int_0^t|\dot X(t)|^2\dif t\\
&\quad
+C(\varepsilon_1+\delta)\int_0^t\left(\|\nabla_\xi\Psi\|_{H^2}^2+\|\nabla_\xi\Phi\|_{L^2}^2
+G^s(t)\right)\dif t
+C\delta\int_0^t
G_3(t)\dif t
\\
&\quad+\int_0^t\|
\sqrt{v}\nabla_\xi\ddiv_\xi\Psi\|_{H^1}^2
\dif t+C\int_0^t\left(\|\nabla_\xi Q_1\|_{H^2}^2+\|\nabla_\xi Q_2\|_{H^2}^2\right)
\dif t
\end{aligned}
\end{equation}

 Multiplying equation $\eqref{sys3}_3$ by $\partial_\xi^\alpha\nabla_\xi\Psi$ for $|\alpha|=1,2$, and integrating over $(0, t)\times\mathbb{R}\times\mathbb{T}^2$, we have
\begin{align*}
&\mu\int_0^t\int_{\mathbb{T}^2}\int_{\mathbb{R}}v|\partial_\xi^\alpha\nabla_\xi\Psi|^2\dif \xi_1\dif \xi^{\prime}\dif t
\\
=&\tau\int_0^t\int_{\mathbb{T}^2}\int_{\mathbb{R}}
\partial_t\partial_\xi^\alpha Q_1
:\partial_\xi^\alpha\nabla_\xi\Psi\dif \xi_1\dif \xi^{\prime}\dif t
-\tau\sigma\int_0^t\int_{\mathbb{T}^2}\int_{\mathbb{R}}
\partial_{\xi_1}\partial_\xi^\alpha Q_1
:\partial_\xi^\alpha\nabla_\xi\Psi\dif \xi_1\dif \xi^{\prime}\dif t\\
&+\tau\int_0^t\int_{\mathbb{T}^2}\int_{\mathbb{R}}
        \partial_\xi^\alpha ( u\cdot\nabla_\xi Q_1)
:\partial_\xi^\alpha\nabla_\xi\Psi\dif \xi_1\dif \xi^{\prime}\dif t
-\tau\int_0^t\int_{\mathbb{T}^2}\int_{\mathbb{R}}
     \dot{X}(t)\partial_{\xi_1}\partial_\xi^\alpha(\Pi_1^s)^{-X}
:\partial_\xi^\alpha\nabla_\xi\Psi\dif \xi_1\dif \xi^{\prime}\dif t\\
&+\tau\int_0^t\int_{\mathbb{T}^2}\int_{\mathbb{R}}
        \partial_\xi^\alpha(vF\partial_{\xi_1}(\Pi_1^s)^{-X})
:\partial_\xi^\alpha\nabla_\xi\Psi\dif \xi_1\dif \xi^{\prime}\dif t
+\int_0^t\int_{\mathbb{T}^2}\int_{\mathbb{R}}
    \partial_\xi^\alpha(v  Q_1)
:\partial_\xi^\alpha\nabla_\xi\Psi\dif \xi_1\dif \xi^{\prime}\dif t\\
&-\mu\int_0^t\int_{\mathbb{T}^2}\int_{\mathbb{R}}
\partial_\xi^\alpha\left(v\left(\nabla_\xi\Psi\right)^T
-\frac{2v}{3}\left(\ddiv_\xi\Psi I_3\right)\right):\partial_\xi^\alpha\nabla_\xi\Psi
\dif \xi_1\dif \xi^{\prime}\dif t=:\sum\limits_{i=1}\limits^7W_i.
\end{align*}
For $W_1$, by applying integration by parts and using $\eqref{sys1}_2$, we get
\begin{align*}
W_1
=&\tau\int_{\mathbb{T}^2}\int_{\mathbb{R}}
\left(\partial_\xi^\alpha Q_1
:\partial_\xi^\alpha\nabla_\xi\Psi
-\partial_\xi^\alpha Q_{10}
:\partial_\xi^\alpha\nabla_\xi\Psi_0\right)\dif \xi_1\dif \xi^{\prime}
-\tau\sigma\int_0^t\int_{\mathbb{T}^2}\int_{\mathbb{R}}
\partial_\xi^\alpha Q_1: \partial_{\xi_1}\partial_\xi^\alpha\nabla_\xi\Psi
\dif \xi_1\dif \xi^{\prime}\dif t\\
&+\tau\int_0^t\int_{\mathbb{T}^2}\int_{\mathbb{R}}
\partial_\xi^\alpha Q_1: \partial_\xi^\alpha\nabla_\xi(u\cdot\nabla_\xi\Psi)
        \dif \xi_1\dif \xi^{\prime}\dif t
+\tau\int_0^t\int_{\mathbb{T}^2}\int_{\mathbb{R}}
\partial_\xi^\alpha Q_1: (\dot{X}(t)\partial_{\xi_1}\partial_\xi^\alpha\nabla_\xi(u^s)^{-X})
    \dif \xi_1\dif \xi^{\prime}\dif t\\
&+\tau\int_0^t\int_{\mathbb{T}^2}\int_{\mathbb{R}}
      \partial_\xi^\alpha Q_1: \partial_\xi^\alpha\nabla_\xi(vF\partial_{\xi_1}(u^s)^{-X})
        \dif \xi_1\dif \xi^{\prime}\dif t\\
&-\tau\int_0^t\int_{\mathbb{T}^2}\int_{\mathbb{R}}
 \partial_\xi^\alpha Q_1: \partial_\xi^\alpha\nabla_\xi(v\nabla_\xi\left(p(v)-p((v^s)^{-X})\right))
    \dif \xi_1\dif \xi^{\prime}\dif t\\
    &+\tau\int_0^t\int_{\mathbb{T}^2}\int_{\mathbb{R}}
      \partial_\xi^\alpha Q_1: \partial_\xi^\alpha\nabla_\xi(v\ddiv_\xi Q_1)
        \dif \xi_1\dif \xi^{\prime}\dif t
-\tau\int_0^t\int_{\mathbb{T}^2}\int_{\mathbb{R}}
  \partial_\xi^\alpha Q_1: \partial_\xi^\alpha\nabla_\xi(v\nabla_\xi Q_2)
    \dif \xi_1\dif \xi^{\prime}\dif t\\
    &=:\sum\limits_{j=1}\limits^8W_{1j},
\end{align*}
Using the same method as for estimating $A_1$ and $A_5$, we imply that
\[
W_{11}\le \frac{1}{2}\left(\tau\|\partial_\xi^\alpha Q_{1}\|_{L^2}^2+\|\partial_\xi^\alpha\nabla_\xi\Psi\|_{L^2}^2
+\tau\|\partial_\xi^\alpha Q_{10}\|_{L^2}^2+\|\partial_\xi^\alpha\nabla_\xi\Psi_0\|_{L^2}^2\right),
\]
\[
W_{12}=\tau\sigma\int_0^t\int_{\mathbb{T}^2}\int_{\mathbb{R}}
\partial_{\xi_1}\partial_\xi^\alpha Q_1
:\partial_\xi^\alpha\nabla_\xi\Psi\dif \xi_1\dif \xi^{\prime}\dif t=-W_2,
\]
\begin{align*}
W_{13}&\le
 \frac{\mu}{24}\int_0^t\int_{\mathbb{T}^2}\int_{\mathbb{R}}v|\partial_\xi^\alpha\nabla_\xi\Psi|^2\dif \xi_1\dif \xi^{\prime}\dif t
+C\int_0^t\|\partial_\xi^\alpha Q_1\|_{H^1}^2\dif t
+C(\delta+\varepsilon_1)\int_0^t\|\nabla^k_\xi \Psi\|_{H^{|\alpha|-1}}^2\dif t,
\end{align*}
\[
W_{14}\le C\delta^2\int_0^t|\dot X(t)|^2\dif t+C\delta\int_0^t\|\partial_\xi^\alpha Q_1\|_{L^2}^2\dif t,
\]
\[
W_{15}\le C\delta\int_0^t\left(G_3(t)+G^s(t)+\|\partial_\xi^\alpha Q_1\|_{L^2}^2
+\|\nabla_\xi\Psi\|_{H^{|\alpha|}}^2+\|\nabla_\xi\Phi\|_{H^{|\alpha|}}^2\right)\dif t,
\]
\begin{align*}
W_{16}&\le \frac{-\mu}{200}\int_0^t\int_{\mathbb{T}^2}\int_{\mathbb{R}}vp^\prime(v)|\partial_\xi^\alpha\nabla_\xi\Phi|^2\dif \xi_1\dif \xi^{\prime}\dif t
    +C\int_0^t\|\partial_\xi^\alpha Q_1\|_{H^1}^2\dif t\\
    &\quad+C(\delta+\varepsilon_1)\int_0^t\left(G^s(z)+\|\nabla_\xi \Phi\|_{H^{|\alpha|-1}}^2\right)\dif t,
  \end{align*}
and
\[
 W_{17} \le C\int_0^t\|\nabla_\xi Q_1\|_{H^{|\alpha|}}^2\dif t,
 \quad
  W_{18} \le C\int_0^t\left(\|\partial_\xi^\alpha Q_1\|_{H^1}^2+\|\nabla_\xi Q_2\|_{H^{|\alpha|}}^2\right)\dif t.
\]
Thus, combining the above estimates, we derive that
\begin{equation}\label{w1}
\begin{aligned}
W_1+W_2\le& \frac{1}{2}\left(\tau\|\partial_\xi^\alpha Q_{1}\|_{L^2}^2+\|\partial_\xi^\alpha\nabla_\xi\Psi\|_{L^2}^2
+\tau\|\partial_\xi^\alpha Q_{10}\|_{L^2}^2+\|\partial_\xi^\alpha\nabla_\xi\Psi_0\|_{L^2}^2\right)
+C\delta^2\int_0^t|\dot X(t)|^2\dif t\\
&
+\frac{\mu}{24}\int_0^t\int_{\mathbb{T}^2}\int_{\mathbb{R}}v|\partial_\xi^\alpha\nabla_\xi\Psi|^2\dif \xi_1\dif \xi^{\prime}\dif t
+\frac{-\mu}{200}\int_0^t\int_{\mathbb{T}^2}\int_{\mathbb{R}}vp^\prime(v)|\partial_\xi^\alpha\nabla_\xi\Phi|^2\dif \xi_1\dif \xi^{\prime}\dif t\\
&+C(\delta+\varepsilon_1)\int_0^t\left(G_3(t)+G^s(t)+\|\nabla_\xi \Phi\|_{H^{|\alpha|}}^2+\|\nabla_\xi \Psi\|_{H^{|\alpha|}}^2\right)\dif t\\
&\quad+C\int_0^t\left(\|\nabla_\xi Q_1\|_{H^{|\alpha|}}^2+\|\nabla_\xi Q_2\|_{H^{\alpha}}^2\right)\dif t.
\end{aligned}
\end{equation}
Similarly, we obtain that
\begin{equation}\label{w3}
W_3\le \frac{\mu}{24}\int_0^t\int_{\mathbb{T}^2}\int_{\mathbb{R}}v|\partial_\xi^\alpha\nabla_\xi\Psi|^2\dif \xi_1\dif \xi^{\prime}\dif t
+C\int_0^t\|\partial_\xi^\alpha Q_1\|_{H^{1}}^2\dif t,
\end{equation}
\begin{equation}\label{w4}
W_4\le C\delta^2\int_0^t|\dot X(t)|^2\dif t+C\delta\int_0^t\|\partial_\xi^\alpha\nabla_\xi \Psi\|_{L^2}^2\dif t,
\end{equation}
\begin{equation}\label{w5}
W_5\le  C\delta\int_0^t\left(G_3(t)+G^s(t)
+\|\nabla_\xi\Psi\|_{H^{|\alpha|}}^2+\|\nabla_\xi\Phi\|_{H^{|\alpha|-1}}^2\right)\dif t,
\end{equation}
and
\begin{equation}\label{w6}
W_6\le \frac{\mu}{24}\int_0^t\int_{\mathbb{T}^2}\int_{\mathbb{R}}v|\partial_\xi^\alpha\nabla_\xi\Psi|^2\dif \xi_1\dif \xi^{\prime}\dif t
+C\int_0^t\|\nabla_\xi Q_1\|_{H^{|\alpha|-1}}^2\dif t.
\end{equation}
For $W_7$, first of all, straightforward computation yields
\begin{align*}
	&{\mu}\left(v\partial_\xi^\alpha\left(\nabla_\xi\Psi\right)^T
	-\frac{2v}{3}\partial_\xi^\alpha\left(\ddiv_\xi\Psi I_3\right)\right):\partial_\xi^\alpha\nabla_\xi\Psi
	=
	\mu\sum\limits_{i=1}\limits^3\sum\limits_{j=1}\limits^3
	\left(
	v \partial_\xi^\alpha\Psi_{i\xi_j}\cdot\partial_\xi^\alpha\Psi_{j\xi_i}
	-\frac{2v}{3} \partial_\xi^\alpha\Psi_{i\xi_i}\cdot\partial_\xi^\alpha\Psi_{j\xi_j}
	\right).
\end{align*}
By integration by parts, it holds
\begin{align*}
	&\int_0^t\int_{\mathbb{T}^2}\int_{\mathbb{R}}v \partial_\xi^\alpha\Psi_{i\xi_j}\cdot\partial_\xi^\alpha\Psi_{j\xi_i}
	\dif \xi_1\dif \xi^{\prime}\dif t\\
	&=
	-\int_0^t\int_{\mathbb{T}^2}\int_{\mathbb{R}}v_{\xi_i} \partial_\xi^\alpha\Psi_{i\xi_j}\cdot\partial_\xi^\alpha\Psi_{j}
	\dif \xi_1\dif \xi^{\prime}\dif t
	+\int_0^t\int_{\mathbb{T}^2}\int_{\mathbb{R}}v_{\xi_j} \partial_\xi^\alpha\Psi_{i\xi_i}\cdot\partial_\xi^\alpha\Psi_{j}
	\dif \xi_1\dif \xi^{\prime}\dif t\\
	&\quad+\int_0^t\int_{\mathbb{T}^2}\int_{\mathbb{R}}v \partial_\xi^\alpha\Psi_{i\xi_i}\cdot\partial_\xi^\alpha\Psi_{j\xi_j}
	\dif \xi_1\dif \xi^{\prime}\dif t.
\end{align*}
So,
\begin{align*}
	\sum\limits_{i=1}\limits^3\sum\limits_{j=1}\limits^3
	\int_0^t\int_{\mathbb{T}^2}\int_{\mathbb{R}}v \partial_\xi^\alpha\Psi_{i\xi_i}\cdot\partial_\xi^\alpha\Psi_{j\xi_j}
	\dif \xi_1\dif \xi^{\prime}\dif t
	=\int_0^t
	\|\sqrt{v} \partial_\xi^\alpha\ddiv_\xi\Psi\|_{L^2}^2
	\dif t
	,
\end{align*}
and
	\begin{align*}
		&\sum\limits_{i=1}\limits^3\sum\limits_{j=1}\limits^3
		\left(-\int_0^t\int_{\mathbb{T}^2}\int_{\mathbb{R}}v_{\xi_i} \partial_\xi^\alpha\Psi_{i\xi_j}\cdot\partial_\xi^\alpha\Psi_{j}
		\dif \xi_1\dif \xi^{\prime}\dif t
		+\int_0^t\int_{\mathbb{T}^2}\int_{\mathbb{R}}v_{\xi_j} \partial_\xi^\alpha\Psi_{i\xi_i}\cdot\partial_\xi^\alpha\Psi_{j}
		\dif \xi_1\dif \xi^{\prime}\dif t\right)\\
		&\le C(\delta+\varepsilon_1)
		\int_0^t \|\nabla_\xi\Psi\|^2_{H^{|\alpha|-1}}
		\dif t
		.
	\end{align*}
In addition,
	\begin{align*}
		&\mu\int_0^t\int_{\mathbb{T}^2}\int_{\mathbb{R}}
		\partial_\xi^\alpha\left(v\left(\nabla_\xi\Psi\right)^T
		-\frac{2v}{3}\left(\ddiv_\xi\Psi I_3\right)\right):\partial_\xi^\alpha\nabla_\xi\Psi
		\dif \xi_1\dif \xi^{\prime}\dif t\\
		&\quad-\mu\int_0^t\int_{\mathbb{T}^2}\int_{\mathbb{R}}
		\left(v\partial_\xi^\alpha\left(\nabla_\xi\Psi\right)^T
		-\frac{2v}{3}\partial_\xi^\alpha\left(\ddiv_\xi\Psi I_3\right)\right):\partial_\xi^\alpha\nabla_\xi\Psi
		\dif \xi_1\dif \xi^{\prime}\dif t\\
		&\le
		C(\delta+\varepsilon_1)
		\int_0^t \|\nabla_\xi\Psi\|^2_{H^{|\alpha|}}
		\dif t.
	\end{align*}
Thus, combining the above estimates, we derive that
\begin{equation}\label{w7}
W_7\le -\int_0^t
\|\sqrt{v} \partial_\xi^\alpha\ddiv_\xi\Psi\|_{L^2}^2
\dif t+C(\delta+\varepsilon_1)
\int_0^t \|\nabla_\xi\Psi\|^2_{H^{|\alpha|}}
\dif t.
\end{equation}

Therefore, combining \eqref{w1}-\eqref{w7} and using the smallness of $\delta$ and $\varepsilon_1$, we conclude that
\begin{equation}\label{gjhsgj4}
\begin{aligned}
&\frac{\mu}{2}\int_0^t\|\sqrt{v}\nabla^2_\xi\Psi\|_{H^1}^2\dif t
+\frac{\mu}{3}\int_0^t\|
\sqrt{v}\nabla_\xi\ddiv_\xi\Psi\|_{H^1}^2
\dif t\\
&\le
\frac{1}{2}\left(\tau\|\nabla_\xi Q_{1}\|_{L^2}^2+\|\nabla^2_\xi\Psi\|_{L^2}^2
+\tau\|\nabla_\xi Q_{10}\|_{L^2}^2+\|\nabla^2_\xi\Psi_0\|_{L^2}^2\right)
+C\delta^2\int_0^t|\dot X(t)|^2\dif t\\
&\quad
+\frac{-\mu}{50}\int_0^t\int_{\mathbb{T}^2}\int_{\mathbb{R}}\|\sqrt{|vp^\prime(v)|}\nabla^2_\xi\Phi\|_{H^1}^2\dif \xi_1\dif \xi^{\prime}\dif t
+C\int_0^t\left(\|\nabla_\xi Q_1\|_{H^{2}}^2+\|\nabla_\xi Q_2\|_{H^{2}}^2\right)\dif t\\
&\quad+C(\delta+\varepsilon_1)\int_0^t\left(G^s(t)+\|\nabla_\xi \Phi\|_{L^2}^2+\|\nabla_\xi \Psi\|_{L^2}^2\right)\dif t
+C\delta\int_0^tG_3(t)\dif t.
\end{aligned}
\end{equation}

Now, multiplying \eqref{gjhsgj3} and \eqref{gjhsgj4} by $\frac{\mu}{6}$ and $1$, respectively, and using the smallness of $\delta, \varepsilon_1$ together with Lemma \ref{legj}, we derive that
\begin{equation}\label{gjhsgj6}
\begin{aligned}
&\frac{\mu}{16}\int_0^t\|\sqrt{|vp^\prime(v)}|\nabla_\xi^2\Phi\|_{H^1}^2
\dif \xi_1\dif \xi^{\prime}\dif t
+\frac{\mu}{4}\int_0^t\|\sqrt{v}\nabla^2_\xi\Psi\|_{H^1}^2\dif t
+\frac{\mu}{6}\int_0^t\|
\sqrt{v}\nabla_\xi\ddiv_\xi\Psi\|_{H^1}^2
\dif t\\
&\le C\Big(\|\nabla_\xi\Phi_0\|_{H^2}^2+\|\nabla_\xi\Psi_0\|_{H^2}^2
+\tau\|\nabla_\xi Q_{10}\|_{H^2}^2
+\tau\|\nabla_\xi Q_{20}\|_{H^2}^2\Big)+C\delta^2\int_0^t|\dot X(t)|^2\dif t\\
&\quad+C(\varepsilon_1+\delta)\int_0^t
\left(\|\left(\nabla_\xi\Phi\|_{L^2}^2+\| \nabla_\xi\Psi\right)\|_{L^2}^2
\right)
\dif t
+C(\varepsilon_1+\delta)\int_0^t
G^s(t)
\dif t
+C\delta\int_0^t
G_3(t)
\dif t.
\end{aligned}
\end{equation}

Thus, combining Corollary \ref{coro111}, the proof of this is finished.
\end{proof}

Under the smallness of $\delta$ and $\varepsilon_1$, the a priori estimates follows from Lemmas \ref{legj} and \ref{lehs3}, together with Corollary \ref{coro111}.

\section{Proof of Theorem \ref{th1.2}}
In this section, we prove the Theorem \ref{th1.2} by use of the uniform estimates obtained in Section 4 and usual compactness arguments.
Firstly, according to Theorem \ref{th1.1}, we have
\begin{align*}
\sup_{0\le t<+\infty}\|(\Phi^{\tau}, \Psi^{\tau}, \sqrt{\tau}Q_1^{\tau}, \sqrt{\tau}Q_2^{\tau})  (t,\cdot)\|_{H^2(\Omega)}^2+\int_0^{+\infty}\left(\|(\Phi^{\tau}_{x}, \Psi^{\tau}_{x})\|_{H^1(\Omega)}^2+\|(Q_1^{\tau}, Q_2^{\tau})\|_{H^2(\Omega)}^2\right)\dif t
\le C_0E(0),
\end{align*}
where $
E(0)=\|(\Phi^\tau, \Psi^\tau, \sqrt{\tau}Q_1^\tau, \sqrt{\tau}Q_2^\tau)(0, \cdot)\|_{H^2(\Omega)}$, $\Phi^{\tau}=v^{\tau}- (v^s)^{\tau}, \Psi^{\tau}=u^{\tau}- (u^s)^{\tau}, Q_1^{\tau}=\Pi_1^{\tau}- (\Pi^s_1)^{\tau}, Q_2^{\tau}=\Pi_2^{\tau}- (\Pi^s_2)^{\tau}$, $C_0$ is a constant independent of $\tau$ and $ (v^s)^{\tau}, (u^s)^{\tau}, (\Pi^s_1)^{\tau}, (\Pi^s_2)^{\tau}$ are the planar viscous waves of system \eqref{1.6}, respectively. Thus, there exist $(\Phi^0, \Psi^0)\in L^{\infty}((0,\infty);H^3(\Omega))$ and $Q^0\in L^2((0, \infty);H^3(\Omega))$ such that
\begin{equation}\label{5.1}
\begin{aligned}
(\Phi^{\tau}, \Psi^{\tau})\rightharpoonup(\Phi^0, \Psi^0)\qquad weak-* \quad in \quad L^{\infty}((0,\infty);H^3(\Omega)),\\
Q^{\tau}\rightharpoonup Q^0 \qquad weakly- \quad in \quad  L^2((0, \infty);H^3(\Omega)).
\end{aligned}
\end{equation}

Secondly, from Lemma \ref{pvsw}, we get
\[
\begin{aligned}
&\|(v^s)^{\tau}-v_-\|_{L^2(\mathbb{R}^-)}+\|( v^s)^{\tau}-v_+\|_{L^2(\mathbb{R}^+)}
+\|( v^s)^{\tau}_{x}\|_{H^3(\mathbb R)}\le C,\\
&\|(u^s)^{\tau}-u_-\|_{L^2(\mathbb{R}^-)}+\|( u^s)^{\tau}-u_+\|_{L^2(\mathbb{R}^+)}
+\|( u^s)^{\tau}_{x}\|_{H^3(\mathbb R)}\le C,\\
&\|( \Pi^s_1)^{\tau}\|_{H^3(\mathbb R)}\le C,\qquad \|( \Pi^s_2)^{\tau}\|_{H^3(\mathbb R)}\le C,
\end{aligned}
\]
where $C$ independent of $\tau$.

Then, using compactness theorem, for any $T>0$, we have
\[
\begin{aligned}
( v^s)^{\tau}\rightarrow (v^s)^0,\quad( u^s)^{\tau}\rightarrow( u^s)^0,\qquad strongly\quad in \quad C([0,T]; H_{loc}^3(\mathbb R)),\\
( \Pi_1^s)^{\tau}\rightharpoonup( \Pi_1^s)^0, \quad( \Pi_2^s)^{\tau}\rightharpoonup( \Pi_2^s)^0,
\qquad weakly-\quad in \quad L^\infty((0,\infty), H^3(\mathbb R)).
\end{aligned}
\]
In addition, let $\tau\rightarrow0$ in \eqref{2.6}, we have $\tau\sigma_{\ast} (\Pi^s_{11})^\tau\rightarrow 0$, $\tau\sigma_{\ast} (\Pi^s_2)^\tau\rightarrow 0$ in $D^\prime((0,\infty)\times \mathbb R)$ and
\[
\begin{aligned}
( \Pi_1^s)^{\tau}\rightharpoonup\mu\left(\nabla_x (u^s)^0+(\nabla_x (u^s)^0)^T-\frac{2}{3}\ddiv_x(u^s)^0 \mathrm{I}_3\right)
:=( \Pi_1^s)^{0},\qquad  in \quad \mathcal D^\prime((0,\infty)\times \mathbb R),\\
( \Pi_2^s)^{\tau}\rightharpoonup\lambda\ddiv_x(u^s)^0
:=( \Pi^s_2)^{0},\qquad  in \quad \mathcal D^\prime((0,\infty)\times \mathbb R),\\
\end{aligned}
\]
and we know that $( v^s)^0,( u^s)^0$ are the planar viscous wave solutions of 3-D classical Navier-Stokes equations.
Therefore, for any $T>0$, we have
\begin{equation}\label{5.2}
\begin{aligned}
 (v^s)^{\tau}\rightarrow( v^s)^{0}, \quad strongly\quad in \quad C([0, T]; H_{loc}^3(\mathbb R)),\\
  (u^s)^{\tau}\rightarrow( u^s)^{0}, \quad strongly\quad in \quad C([0, T]; H_{loc}^3(\mathbb R)),\\
   (\Pi^s_1)^{\tau}\rightharpoonup(\Pi_1^s)^{0},\qquad weakly- \quad in \quad L^\infty((0, \infty); H_{loc}^3(\mathbb R)),\\
   (\Pi^s_2)^{\tau}\rightharpoonup(\Pi_2^s)^{0},\qquad weakly- \quad in \quad L^\infty((0, \infty); H_{loc}^3(\mathbb R)).
\end{aligned}
\end{equation}

Finally, for any $T>0$, using \eqref{sys1}, we know that $\Phi_t^{\tau}$ and $\Psi_t^{\tau}$ are bounded in $L^2((0,T);H^2(\Omega))$. Furthermore, using compactness theorem, for any $\alpha>0$, $(\Phi^{\tau},\Psi^{\tau})$ are relatively compact in $C([0,T];H_{loc}^{3-\alpha}(\Omega))$. Then, as $\tau\rightarrow0$, we have
\[
(\Phi^{\tau},\Psi^{\tau})\rightarrow(\Phi^0, \Psi^0)\qquad strongly\quad in \quad C([0,T];H_{loc}^{3-\alpha}(\Omega)).
\]
Thus, combining \eqref{5.2}, we have
\begin{align}\label{5.3}
(v^{\tau},u^{\tau})\rightarrow(\Phi^0+ (v^s)^{0}, \Psi^0+ (u^s)^{0})
=:(v^0, u^0),
\qquad strongly\quad in \quad C([0,T];H_{loc}^{3-\alpha}(\Omega)).
\end{align}
On the other hand, noting that $\sqrt{\tau}\Pi_1^{\tau}$ and $\sqrt{\tau}\Pi_2^{\tau}$ are uniformly bounded in $L^{\infty}((0,\infty);H^3(\Omega))$ as $\tau\rightarrow0$, it follows that
\[
 \begin{aligned}
\tau \rho \left(\partial_t\Pi_1+ u\cdot\nabla_x \Pi_1\right)\rightharpoonup 0,\qquad in \quad D^{\prime}((0,\infty)\times \Omega),\\
 \tau \rho \left(\partial_t\Pi_2+ u\cdot\nabla_x \Pi_2\right)\rightharpoonup 0,\qquad in \quad D^{\prime}((0,\infty)\times \Omega).
 \end{aligned}
 \]
Therefore, let $\tau\rightarrow0$ in \eqref{1.6}, we have
\begin{align}\label{5.4}
\Pi_1^{\tau}\rightharpoonup\mu\left(\nabla_xu^0+(\nabla_xu^0)^T-\frac{2}{3}\ddiv_x u^0\mathrm{I}_3\right),\quad
\Pi_1^{\tau}\rightharpoonup\lambda\ddiv_x u^0,
\qquad a.e. \quad (0,\infty)\times\Omega
\end{align}
and we conclude that $v^0,u^0$ are the solutions of 3D classical Navier-Stokes equations.
Then, combining \eqref{5.1}, \eqref{5.2}, \eqref{5.3} and \eqref{5.4}, we get the desired results.

\end{document}